\newif\ifextended
\let\oldtocsubsection=\tocsubsection
\renewcommand{\tocsubsection}[2]{\hspace{1.8em}\oldtocsubsection{#1}{#2}}
\tikzset{%
	add/.style args={#1 and #2}{to path={%
		 ($(\tikztostart)!-#1!(\tikztotarget)$)--($(\tikztotarget)!-#2!(\tikztostart)$)%
	 \tikztonodes}}
 }
\renewcommand{\emptyset}{\varnothing}
\newcommand{\tri}{\triangle}
\newcommand{\bd}{\partial} 
\newcommand{\cl}[1]{\overline{#1}} 
\newcommand{\then}{\Longrightarrow} 
\DeclareMathOperator{\Isom}{Isom} 
\DeclareMathOperator{\Stab}{Stab} 
\DeclareMathOperator{\Aut}{Aut}
\DeclareMathOperator{\PSL}{PSL}
\DeclareMathOperator{\SL}{SL}
\DeclareMathOperator{\PGL}{PGL}
\DeclareMathOperator{\diag}{diag}
\DeclareMathOperator{\inj}{inj}
\DeclareMathOperator{\vol}{vol}
\newcommand{\field}[1]{\mathbb{#1}} 
\newcommand{\R}{\field{R}} 
\renewcommand{\H}{\field{H}} 
\newcommand{\Z}{\field{Z}} 
\newcommand{\N}{\field{N}} 
\newcommand{\T}{\field{T}} 
\newcommand{\RP}{\R\mathbb P} 
\newcommand{\Gam}{\Gamma}
\newcommand{\gam}{\gamma}
\newcommand{\Om}{\Omega}
\newcommand{\Lam}{\Lambda}
\newcommand{\ep}{\epsilon}
\newcommand{\sig}{\sigma} 
\newcommand{\Sig}{\Sigma} 
\renewcommand{\phi}{\varphi}
\renewcommand{\tau}{\uptau}
\numberwithin{equation}{section}
\numberwithin{figure}{section}
\numberwithin{table}{section}
\theoremstyle{plain}
\newtheorem{thm}{Theorem}
\numberwithin{thm}{section}
\newtheorem{theorem}[thm]{Theorem}
\newtheorem{prop}[thm]{Proposition}
\newtheorem{proposition}[thm]{Proposition}
\newtheorem{corollary}[thm]{Corollary}
\newtheorem{lem}[thm]{Lemma}
\newtheorem{lemma}[thm]{Lemma}
\newtheorem*{theorem*}{Theorem}
\newtheorem*{proposition*}{Proposition}
\newtheorem{fact}[thm]{Fact}
\newcounter{intro}
\theoremstyle{definition}
\newtheorem{definition}[thm]{Definition}
\newcommand{\blue}{blue!60!green}
\title{Geodesic flow of nonstrictly convex Hilbert geometries}
\author{Harrison Bray} 
\begin{document}

 \begin{abstract}
 	In this paper we describe the topological behavior of the geodesic flow for a class of 
	closed 3-manifolds realized as quotients of nonstrictly convex
	Hilbert geometries, constructed and described explicitly by Benoist. 
	These manifolds are Finsler geometries which have 
	isometrically embedded flats, but also some hyperbolicity and an explicit geometric structure. 
	We prove the geodesic flow of the
	quotient is topologically mixing and satisfies a nonuniform Anosov closing lemma, 
	with applications to entropy and orbit counting. We also prove entropy-expansiveness for the
	geodesic flow of any compact quotient of a Hilbert geometry.
 \end{abstract}

 \maketitle

 \section{Introduction}
 \label{sec:intro}
 
 We study topological behavior of the geodesic flow of a class of closed 3-manifolds
 which are only Finsler,
 meaning the tangent space admits a norm which does not come from an inner product, and for which
 the geodesic flow is nonuniformly hyperbolic due to the presence of isometrically embedded flats
 of dimension two. The 3-manifolds arise as quotients of 
 properly convex domains in real projective space by discrete groups of projective transformations. 
 Such objects are known as {\em Hilbert geometries} or {\em convex real projective
 structures}. The structure of the domain and the quotient is well-described thanks to Benoist
 (\cite{Ben4}, see Theorem \ref{thm:ben3mfldgeom}). As such, we refer to the 3-manifolds of
 interest as \emph{Benoist 3-manifolds}. 

 We prove several recurrence properties of the geodesic flow of the Benoist 3-manifolds reminiscent of
 hyperbolic dynamics, such as topological transitivity and a nonuniform Anosov Closing Lemma. Though
 stable and unstable sets are not even defined for a dense set of points, we prove that strong unstable
 leaves are dense for closed \emph{hyperbolic} orbits, which are dense in the phase space. These
 results culminate in the following:

 \begin{theorem*}[Theorem \ref{thm:topologicalmixing}]
	 The geodesic flow of a Benoist 3-manifold is topologically mixing\footnote{
	 A continuous dynamical system $f^t\colon X\to X$ is
	 \emph{topologically mixing} if for any open $U,V\subset X$ there exists a $T>0$ such that
 	$f^T(U)\cap V\neq\emptyset$. }.
 \end{theorem*}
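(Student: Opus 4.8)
The plan is to establish topological mixing by exploiting the two tools the paper has already developed: the density of closed hyperbolic orbits together with the density of their strong unstable leaves, and the nonuniform Anosov Closing Lemma. Since a topologically transitive flow with a dense orbit fails to be mixing only because of a nontrivial ``rotation number'' obstruction (as in a suspension over a fixed return time), the strategy is to show that the lengths of closed hyperbolic orbits cannot all be commensurable, i.e. the length spectrum is not contained in a discrete subgroup of $\R$. Concretely, I would first recall that for flows on a connected space, topological transitivity plus the denseness of strong unstable manifolds of periodic points typically upgrades to mixing once one rules out the case where the flow is a constant-time suspension; this is the structural reduction that organizes the whole argument.

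First I would fix open sets $U,V\subset X$ and use transitivity to find a closed hyperbolic orbit $\gamma$ passing near $U$, with its strong unstable leaf $W^{uu}(\gamma)$ dense in $X$ and hence meeting $V$. The density of $W^{uu}(\gamma)$ is the key input supplied earlier in the paper: it lets me produce points whose forward orbits, after a controlled time, enter $V$, while the local product structure near the hyperbolic orbit guarantees that nearby points in $U$ shadow these trajectories. The contraction along the strong unstable direction, made quantitative by the nonuniform Anosov Closing Lemma, then forces $f^T(U)$ to spread across $W^{uu}(\gamma)$ and in particular to intersect $V$ for all sufficiently large times in an appropriate set of scales.

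The heart of the matter is ruling out the periodic, non-mixing alternative, and here I would use the presence of the isometrically embedded flats together with the hyperbolic orbits to produce two closed orbits whose periods are rationally independent. The flats contribute a continuum of closed geodesics with a continuously varying length, so the length spectrum cannot be contained in any discrete subgroup $c\Z$; combining this with the shadowing/closing lemma shows the return-time function is not cohomologous to a constant, which is exactly the obstruction to being a constant-time suspension. Once the length spectrum is verified to be non-arithmetic, the transitive flow with dense strong unstable leaves is mixing.

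The main obstacle, I expect, is the nonuniformity: the stable and unstable sets are not even defined on a dense set of points coming from the flats, so the classical hyperbolic arguments that yield mixing from a local product structure do not apply verbatim. The delicate step is to confine all the shadowing and spreading estimates to the hyperbolic orbits and their strong unstable leaves—where the paper has shown the needed hyperbolic behavior holds—while using density to transport the conclusion into arbitrary open sets $U,V$ that may themselves meet the flat, non-hyperbolic part of phase space. Managing the uniformity of the time $T$ across all such open sets, given that the closing lemma is only nonuniform, will require carefully choosing the hyperbolic orbit $\gamma$ depending on $U$ and $V$ and tracking the constants through the argument.
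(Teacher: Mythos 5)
Your proposal assembles the right ingredients (dense hyperbolic periodic orbits, dense strong unstable leaves, the nonuniform closing lemma, non-arithmeticity of the length spectrum), but the way you put them together has two genuine gaps. The central one is the structural reduction: you assume that a transitive flow with dense unstable leaves of periodic orbits can fail to mix only by being a constant-time suspension, and that ruling this out via a non-arithmetic length spectrum finishes the proof. That dichotomy (the Anosov--Plante alternative) is a theorem about flows with local product structure, and this flow has none: by Theorem \ref{thm:ben3mfldgeom}(g) the vertices of properly embedded triangles are dense in $\bd\Om$, so vectors with no stable or unstable sets are dense in $SM$ (this is stated explicitly after Proposition \ref{prop:globalstables}), and in particular your appeal to ``local product structure near the hyperbolic orbit'' is unavailable --- only the Bowen bracket of pairs of \emph{regular} vectors (Proposition \ref{prop:bowenbracket}) survives. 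The paper's route makes the dichotomy unnecessary: once $W^{su}(p)$ is dense for a hyperbolic periodic $p\in U$ (Proposition \ref{prop:denseunstables}), the elementary Lemma \ref{lem:stablemflddensity_altdefn} --- cover one period of $p$ by finitely many phases $t_i$, choose $z_i\in W^{su}(\phi^{t_i}p)\cap B(x,\delta/2)$ by density, and use monotone backward contraction along unstable leaves --- already gives $\phi^t W^{su}_\delta(p)\cap B(x,\delta)\neq\emptyset$ for \emph{all} $t\geq T$, hence mixing. Your hedge ``for all sufficiently large times in an appropriate set of scales'' is exactly the missing step: mixing requires all large $T$, and your mechanism does not deliver it.

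The second gap is your source of non-arithmeticity. The claim that ``the flats contribute a continuum of closed geodesics with continuously varying length'' is false: closed orbits on a quotient torus of a properly embedded triangle come in parallel families within each free homotopy class, all of the same length (the translation length of the corresponding element of the $\Z^2$ subgroup), so the flat length spectrum is countable; moreover flat orbits are useless for this argument, since they have no strong unstable leaves, are excluded by the closing lemma (which operates on $\Lambda_\eta$, away from $SM_{\text{flat}}$), and cannot be glued or shadowed. What is actually needed --- and is where the non-arithmeticity genuinely enters --- is Proposition \ref{prop:densespec}: the lengths of \emph{hyperbolic} closed orbits generate a dense subgroup of $\R$, proved via the quasi-Fuchsian surface subgroup and Zariski density, not via the flats. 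The real work of the mixing section is then Lemma \ref{lem:noncomm}, which localizes this: combining transitivity, weak orbit gluing (Lemma \ref{lem:orbitglue}), and Anosov closing (Theorem \ref{thm:acl}), the lengths of hyperbolic periodic orbits passing through any prescribed open set $U$ generate a dense subgroup of $\R$; this is what feeds the proof of density of unstable leaves. In short: non-arithmeticity is consumed in proving Proposition \ref{prop:denseunstables}, not in a final suspension dichotomy, and it must come from the hyperbolic part of the spectrum, not the flats.
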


 The geometric properties of the universal cover which Benoist verifies in dimension three are
 essential for the arguments, hence the results do not immediately generalize. 

 This paper also serves as a precursor to work of the author on the Bowen-Margulis measure of maximal
 entropy \cite{braymme}. To that end, we verify conditions of Bowen \cite{bowen72} for easier computability of
 topological entropy which holds generally: 
 \begin{theorem*}[Theorem \ref{thm:hexp}]
	 The geodesic flow of any closed Hilbert geometry satisfies Bowen's entropy-expansive
	 property. 
 \end{theorem*}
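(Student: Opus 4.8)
The plan is to reduce entropy-expansiveness to a statement about bi-asymptotic geodesics and then to verify that the relevant ``flat'' regions carry no entropy. Recall that Bowen's property requires an $\ep>0$ so that the tail entropy $h^*(\ep)=\sup_v h_{\mathrm{top}}(\phi,\Gam_\ep(v))$ vanishes, where $\Gam_\ep(v)=\{w\in T^1M : d(\phi^t v,\phi^t w)\le\ep \text{ for all } t\in\R\}$ is the bi-infinite dynamical ball and $d$ is a fixed metric on the (Finsler) unit tangent bundle $T^1M$ of $M=\Om/\Gam$. Since $M$ is compact, I would fix a small $\ep$ and lift everything to $T^1\Om$: a vector $w\in\Gam_\ep(v)$ then corresponds to a complete geodesic $\gam_w$ of $\Om$ whose oriented unit-speed parametrization stays within $\ep$ of $\gam_v$ for all time, hence within bounded Hausdorff distance.

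First I would establish the governing geometric lemma: if two complete geodesics of $\Om$ remain within bounded Hausdorff distance for all time, then they are bi-asymptotic and bound a flat. Concretely, the forward endpoints $\gam_w(+\infty),\gam_v(+\infty)$ (and likewise at $-\infty$) must coincide or lie in the relative interior of a common maximal segment $S^+\subset\bd\Om$ (resp.\ $S^-$); the union of all projective geodesics running from $S^-$ to $S^+$ is then the intersection of $\Om$ with an affine subspace, on which the Hilbert metric has flat, product-type structure. This uses proper convexity of $\Om$ together with the elementary cross-ratio estimate that two points on distinct projective lines with distinct boundary endpoints are driven apart by the flow, so bounded distance forces common endpoint-segments. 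Thus $\Gam_\ep(v)$ lifts into the unit tangent bundle of a single flat $F=F(\gam_v)$.

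Next I would show that the geodesic flow restricted to the part of $F$ relevant to $\Gam_\ep(v)$ has zero topological entropy. The geodesics in $F$ bi-asymptotic to $\gam_v$ are projective line segments joining $S^-$ to $S^+$; they form a family transverse to the flow on which the time-$t$ map acts through the flat's translation structure, hence isometrically and equicontinuously in the transverse directions. An equicontinuous transverse action combined with the one-dimensional flow direction yields zero entropy: any $(T,\del)$-separated subset of $\Gam_\ep(v)$ has cardinality bounded independently of $T$, so $h_{\mathrm{top}}(\phi,\Gam_\ep(v))=0$. As the bound is uniform in $v$, depending only on the global geometry of $\Om$ and on $\ep$, this gives $h^*(\ep)=0$ and hence h-expansiveness.

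The main obstacle I anticipate is the geometric lemma, specifically controlling the nonstrictly convex boundary: I must rule out that two geodesics drift apart only sublinearly while having endpoints in distinct faces of $\bd\Om$, and I must verify that the flat $F$ genuinely has product Hilbert geometry so that the transverse flow is equicontinuous rather than merely distance-nonincreasing. Making the cross-ratio separation estimate quantitative and uniform over $T^1\Om$, and using compactness of $M$ to pass from asymptotic behavior to a uniform entropy bound, is the technical crux; the zero-entropy conclusion for flats is then comparatively routine.
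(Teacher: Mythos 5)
Your proposal has a genuine gap, and it is the definition of the infinite Bowen ball. The paper (following Bowen, whose Theorem 2.4 is the entire point of proving $h$-expansiveness, since it is what gets used in Proposition \ref{prop:entropyineq}) defines the tail entropy via the \emph{forward} infinite Bowen ball $\Phi_\ep(v)=\{u \mid d(\phi^t u,\phi^t v)\le\ep \text{ for all } t\in\R^+\}$, whereas you work with the bi-infinite ball $\{u \mid d(\phi^t u,\phi^t v)\le\ep \text{ for all } t\in\R\}$. The bi-infinite ball is strictly smaller, and zero entropy on bi-infinite balls does not give the vanishing of Bowen's tail entropy $h^*(\ep)$ that his computation theorem requires. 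Worse, your key reduction --- that the dynamical ball lifts into the unit tangent bundle of a single flat --- is simply false for $\Phi_\ep(v)$: take $v$ with $v^+$ a proper extremal point; then $\Phi_\ep(v)$ contains a whole neighborhood of $v$ inside its strong stable set $W^{ss}(v)$ (distance to $v$ is nonincreasing under the forward flow, by the cross-ratio monotonicity), and these vectors have backward endpoints $u^-$ spread over an open subset of $\bd\Om$, so they are pairwise bi-asymptotic to nothing and lie in no common flat. The flat-strip reduction therefore cannot even begin for the sets whose entropy must actually be controlled.

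There are also problems with the geometric lemma itself, even restricted to bi-infinite balls. Hilbert metrics are not Busemann convex (convexity of $t\mapsto d_\Om(\sigma(t),\tau(t))$ fails except for the ellipsoid; only the weak inequality of Crampon's Lemma 8.3 is available), so there is no flat strip theorem to invoke. Moreover the theorem claims \emph{all} closed Hilbert geometries in any dimension, where the faces $S^-,S^+$ of $\bd\Om$ can themselves be high-dimensional convex bodies; the union of projective lines joining them is the Hilbert geometry of a convex section of $\Om$, which need not be a simplex and hence need not be isometric to any normed space --- so the claimed ``flat, product-type structure'' and transverse equicontinuity are not available in general. The paper's proof shows that none of this machinery is needed: the only mechanism is forward non-expansion organized by the forward endpoint. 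If $v^+$ is extremal, then for $\ep<\inj(M)/3$ every $u\in\Phi_\ep(v)$ satisfies $u^+=v^+$, so a $(0,\delta)$-spanning set of $\Phi_\ep(v)$ is $(t,\delta)$-spanning for all $t>0$ and the entropy is zero. If $v^+$ is not extremal, the forward endpoints of elements of $\Phi_\ep(v)$ lie in an $\ep$-ball, for the intrinsic Hilbert metric $d_C$, of the face $C$ (the intersection of all supporting hyperplanes at $v^+$); covering that face ball by finitely many $\delta/2$-balls splits $\Phi_\ep(v)$ into finitely many pieces, on each of which a $(0,\delta)$-spanning set again spans for all forward time. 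Your intuition that faces are the only obstruction to expansiveness and carry no entropy is correct, but the argument has to be run on forward endpoints in the face metric, not on bi-asymptotic geodesics bounding flats.
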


 Then we have the following consequence for the Benoist 3-manifolds, a corollary of which is
 positive topological entropy. 

 \begin{proposition*}[Proposition \ref{prop:entropyineq}]
	 The topological entropy of the geodesic flow of a Benoist 3-manifold is bounded below by
	 the exponential growth rate of lengths of hyperbolic closed orbits. 
 \end{proposition*}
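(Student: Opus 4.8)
The plan is to bound $h_{\mathrm{top}}$ from below by exhibiting, at a single fixed scale $\ep>0$, dynamically separated families whose cardinalities grow at the desired rate. Write $\phi^t$ for the geodesic flow on $T^1M$ and, for $u,w\in T^1M$, set $d_T(u,w)=\max_{0\le t\le T}\dist(\phi^tu,\phi^tw)$; let $s(T,\ep)$ be the largest cardinality of a $(T,\ep)$-separated set. Since $s(T,\ep)$ is nonincreasing in $\ep$ and $h_{\mathrm{top}}(\phi)=\lim_{\ep\to 0}\limsup_{T\to\infty}\tfrac1T\log s(T,\ep)$, we have $h_{\mathrm{top}}(\phi)\ge \limsup_{T\to\infty}\tfrac1T\log s(T,\ep)$ for \emph{every} $\ep>0$. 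Thus it suffices to fix $\ep$ and produce $(T,\ep)$-separated sets of size $e^{(h_{\mathrm{orb}}-o(1))T}$, where $h_{\mathrm{orb}}=\limsup_{T\to\infty}\tfrac1T\log N(T)$ and $N(T)$ counts hyperbolic closed orbits of length at most $T$. By entropy-expansiveness (Theorem \ref{thm:hexp}) I would take $\ep$ to be the entropy-expansivity constant, keeping the estimate at the scale relevant to the exact computations of the sequel.

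Next I would choose the candidate points: for each hyperbolic closed orbit $\gam$ with $\length(\gam)\le T$ select one vector $v_\gam\in\gam$, and set $\mathcal S_T=\{v_\gam\}$, so $\#\mathcal S_T=N(T)$. This set need not be $(T,\ep)$-separated, since distinct closed orbits may fellow-travel; the whole difficulty is to extract a genuinely separated subset of comparable size. Let $S\subseteq\mathcal S_T$ be a maximal $(T,\ep)$-separated subset. By maximality $S$ is $(T,\ep)$-spanning for $\mathcal S_T$, so every $v_\gam$ lies in a Bowen ball $B_T(w,\ep)=\{u:d_T(u,w)\le\ep\}$ with $w\in S$, whence
\[
N(T)=\#\mathcal S_T\le \#S\cdot \sup_{w\in T^1M}\#\bigl\{\gam:\ \length(\gam)\le T,\ v_\gam\in B_T(w,\ep)\bigr\}.
\]
Writing $M(T,\ep)$ for the supremum, this gives $s(T,\ep)\ge \#S\ge N(T)/M(T,\ep)$.

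The crux is then to show the shadowing multiplicity is subexponential: $M(T,\ep)=e^{o(T)}$. Here the hyperbolicity of the counted orbits is essential. Two distinct hyperbolic closed orbits whose chosen vectors share a ball $B_T(w,\ep)$ must $\ep$-shadow one another throughout $[0,T]$; lifting to the convex domain $\Om$, their axes fellow-travel across a fundamental region. Along a hyperbolic orbit there is genuine expansion transverse to the flow and to the flat locus, so the nonuniform Anosov Closing Lemma constrains such near-coincident orbits tightly in the transverse directions, and I expect entropy-expansiveness to control the remaining freedom by forcing the complexity trapped in a single shadowing tube to be subexponential. I anticipate this multiplicity estimate, rather than any of the surrounding reductions, to be the main obstacle, precisely because the flow fails to be globally expansive and the flats must be excluded by restricting to hyperbolic orbits.

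Granting $M(T,\ep)=e^{o(T)}$, the conclusion is immediate: taking logarithms and $\limsup$ in $s(T,\ep)\ge N(T)/M(T,\ep)$ yields $\limsup_{T\to\infty}\tfrac1T\log s(T,\ep)\ge h_{\mathrm{orb}}$, and with $h_{\mathrm{top}}(\phi)\ge\limsup_{T\to\infty}\tfrac1T\log s(T,\ep)$ this gives $h_{\mathrm{top}}(\phi)\ge h_{\mathrm{orb}}$. If the stronger separation holds---that distinct hyperbolic closed orbits of comparable length are outright $(T,\ep)$-separated---then $M(T,\ep)$ is bounded by an absolute constant and the Bowen-ball bookkeeping collapses to a one-line argument.
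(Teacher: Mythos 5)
Your reductions are sound and match the skeleton of the paper's argument: fix a scale $\ep>0$, use the fact that $(T,\ep)$-separated sets at any fixed scale bound $h_{\mathrm{top}}$ from below, and compare the orbit count $N(T)$ to a maximal separated subset via Bowen balls. But the proposal stops exactly where the content lies. The multiplicity bound $M(T,\ep)=e^{o(T)}$ is never proved, only conjectured, with a gesture toward the Anosov Closing Lemma and entropy-expansiveness --- and neither tool can do this job. The closing lemma (Theorem \ref{thm:acl}) produces a periodic orbit near a recurrent one; it says nothing about how many distinct closed orbits can inhabit a single Bowen ball. Entropy-expansiveness (Theorem \ref{thm:hexp}) says infinite Bowen balls carry zero entropy, but zero entropy does not bound the number of closed orbits inside one: a flat in $SM_{\text{flat}}$ has zero entropy yet a single infinite Bowen ball there contains an uncountable family of closed orbits (this is precisely why the proposition counts only isolated, i.e.\ hyperbolic, orbits). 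So the multiplicity question is not an entropy phenomenon, and the ``subexponential intermediate regime'' you aim for does not exist: the true dichotomy is multiplicity one (hyperbolic) versus a continuum (flat).

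What the paper proves is exactly the ``stronger separation'' you relegate to a closing remark. Taking $\ep<\inj(M)/3$, the set $P_t$ of isolated closed orbits of period at most $t$ is itself $(t,\ep)$-separated. The mechanism is geometric and specific to the Benoist 3-manifolds: if $u,v\in P_T$ satisfy $d_T(u,v)<\ep$, then since both orbits have period at most $T$ they stay $\ep$-close for all time; lifting to $\Omega$, discreteness of $\Gamma$ and $\ep<\inj(M)/3$ force either $u=v$ or that the two lifted projective lines fellow-travel forever, and by Theorem \ref{thm:ben3mfldgeom}(f) such lines lie in a common properly embedded triangle. In the latter case both orbits are flat, hence non-isolated and excluded from the count. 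With multiplicity one, the proposition follows at once from $\#P_t\le R(t,\ep)$ together with the fixed-scale lower bound (the paper invokes Bowen's Theorem \ref{thm:bowenhexp} to write $\lim_{t\to\infty}\tfrac1t\log R(t,\ep)=h_{\mathrm{top}}(\phi)$, but only the inequality $\le$ is needed, and that is free by the monotonicity you yourself note). The missing idea in your proposal is therefore the fellow-traveling-implies-flat rigidity coming from Benoist's structure theorem, not any refinement of the entropy-theoretic bookkeeping; and your expectation that entropy-expansiveness rescues the multiplicity estimate is misplaced, since in this direction of the inequality it plays no essential role.
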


 The structure of the paper is as follows: we first introduce the objects of interest and the
 relevant background in Section \ref{sec:background}. In Section \ref{sec:automorphisms}
 we study automorphisms of the universal cover, and prove that the additive subgroup of $\R$
 generated by lengths of closed hyperbolic orbits is dense 
 (Proposition \ref{prop:densespec}). This result, along with transitivity (Propositive
 \ref{prop:toptrans}) and nonuniform Anosov
 Closing (Theorem \ref{thm:acl}) from Section \ref{sec:recurrence} will be crucial
 for the proof of topological mixing in Section \ref{sec:mixing}. In the same section we also prove a
 nonuniform orbit
 gluing lemma (Lemma \ref{lem:orbitglue}) which suffices for topological mixing but requires no
 control over exponential contraction or expansion. Section \ref{sec:hexp} is devoted to the proof
 of entropy-expansiveness and Section \ref{sec:counting} to orbit counting, with  
 remarks on the relationships between the topological entropy, the volume entropy, and the
 critical exponent of the group.

 \subsection*{Acknowledgements}
 The author is very grateful to Micka\"el Crampon, who contributed significantly to Theorems
 \ref{thm:topologicalmixing} and \ref{thm:hexp} in particular, and to Ludovic Marquis for his
 guidance with Lemma
 \ref{lem:baker_cooper_irred}. The author also thanks his thesis advisor Boris Hasselblatt and postdoc
 mentor Dick Canary for their suggestions and mentoring, and Ralf Spatzier for helpful discussions.
 In addition, the author is grateful to
 the CIRM for support in the early stages of this work. The author 
 was supported in part 
 by NSF RTG grant 1045119.

 \section{Background}
 \label{sec:background}
 	A domain $\Om\subset\RP^n$ is \emph{proper} if there is an affine chart in
	which $\Om$ is bounded, and {\em properly convex} if moreover $\Om$ is convex in this affine
	chart, meaning the intersection of $\Om$ with any line is connected. Similarly, $\Om$ is
	{\em strictly convex} if the intersection of $\bd\Om$ with any line in the compliment
	of $\Om$ contains at most one point. 
	A \emph{Hilbert geometry} on a properly convex domain $\Om\subset \RP^n$ is determined by the
	Hilbert metric,
	defined on an affine chart for $\Om$ as follows:  
	for any $x, y\in \Om$, there is a unique projective line $\cl{xy}$  passing through $x$ and
	$y$. Take $a$ and $b$ to be the intersection points with $\bd\Om$. 
	Then the \emph{$\Om$-Hilbert distance} between $x$ and $y$ is 
 \[
	 d_\Om(x,y) := \frac12 \left| \log [a,x,y,b] \right|,
 \]
 where $[a,x,y,b]:= \frac{|ay|}{|ax|} \frac{|bx|}{|by|}$.
	One can verify that $d_\Om$ satisfies the properties of a metric, is complete on $\Om$, and is well-defined for any affine representation of $\Om$
	by projective invariance of the cross-ratio. 
	Projective lines are always geodesic in this metric, but not all geodesics are lines. 

	The Hilbert metric is compatible with a Finsler norm, which is Riemannian only when $\Om$ is
	an ellipsoid. 
	One can compute that for $(x,v)\in T\Om$, the Finsler norm is given by
	\[
		F(x,v) := \frac1{|v|}\left( \frac1{|xv^+|}+\frac1{|xv^-|}\right)
	\]
	where $v^-,v^+$ are the intersection points with the topological boundary $\bd\Om$ of the
	projective line determined by $v$. 
	A properly convex domain $\Om$ in $\RP^2$ is uniquely geodesic if and only
	if there is at most one open line segment in $\bd\Om$ (this can be verified using the
	well-definedness of the cross-ratio of four lines, or see also \cite{braymme}). 
	The ellipsoid in $\RP^n$ is isometric to $\H^n$ when endowed with the
	Hilbert metric. In this metric, angles are defined, though distorted, since
	the Finsler norm is Riemannian. This model for hyperbolic space is known as
	the Beltrami-Klein model.

For a properly convex open $\Om \subset \RP^n$, define the \emph{automorphism group} of $\Om$ to be
 \[
 \Aut(\Om) := \{ g\in \PSL(n+1, \R) \mid g\Om = \Om\} .
 \]

 Note that $\Aut(\Om)<\Isom(\Om)$, the isometry group of $(\Om,d_\Om)$, since projecive
 transformations preserve the cross-ratio. The full isometry group of $(\Om,d_\Om)$ is, up to index
 2, the group of collineations which preserve $\Om$ \cite{walsh}.
	A properly convex domain $\Om\subset\RP^n$ is \emph{divisible} if it admits a cocompact
	action by a discrete subgroup $\Gam$ of $\PSL(n+1,\R)$, in which case we say $\Gam$ \emph{divides} $\Om$. 

	As a first example, the ellipse is divisible by any Fuchsian group. The projective triangle,
	isometric to $\R^2$ with a hexagonal norm when endowed with the Hilbert metric
	\cite{delaHarpe},
	admits a $\Z^2$ action with quotient torus. 

Suppose $\Gam < \PSL(n+1,\R)$ acts properly discontinuously without torsion on $\Om\subset \RP^n$, so that the quotient $M=\Om/\Gam$ is a manifold. The \emph{geodesic flow} of $M$ is defined on $SM$, the Finsler unit tangent bundle to $M$, by flowing unit tangent vectors along projective lines at unit Hilbert speed:
\begin{align*}
 \phi^t\colon SM&\longrightarrow SM \\
(x, v)&\longmapsto (x+tv, v).
\end{align*}
 
In other words, $(x,v)\in SM$ determines a unique oriented projective line $\ell_v\colon\R\to M$ parameterized
at unit Hilbert speed, with $\ell_v(0)=x$ and $\phi^t(v)$ the Finsler unit tangent vector to
$\ell_v$ based at $\ell_v(t)$.
In the strictly convex case, geodesics are uniquely projective lines and this definition coincides
with the standard definition of geodesic flow. In our setting, geodesics are not unique so we
require defining the geodesic flow in this very natural way. Note also from the definitions that the
regularity of the boundary of $\Omega$ determines the regularity of the geodesic flow. 
\subsection{Benoist's dichotomy}

The following  landmark theorem of Benoist for the study of divisible Hilbert geometries is equivalence of the regularity of the boundary, convexity of the boundary, and hyperbolicity of the flow based on abstract properties of the group.

\begin{theorem}
	[{\cite[Theorem 1.1]{Ben1}}]
	\label{thm:benoistsdichotomy}
	Suppose $\Gam$ is a discrete torsion-free subgroup of $\PSL(n+1,\R)$ dividing an open properly convex domain $\Om\subset \RP^n$. Then the following are equivalent:
	\begin{enumerate}[(i)]
		\item The domain $\Om$ is strictly convex.
		\item The boundary $\bd\Om$ is of class $C^1$.
		\item The group $\Gam$ is $\delta$-hyperbolic.
		\item The geodesic flow on the quotient manifold $M=\Om/\Gam$ is Anosov.
	\end{enumerate}
\end{theorem}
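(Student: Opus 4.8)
The plan is to prove the four conditions equivalent in three stages: first use projective duality to collapse (i) and (ii) onto the other conditions, then establish the core equivalence between strict convexity and $\del$-hyperbolicity, and finally bracket the dynamical condition (iv) between them. Recall the dual domain $\Om^* \subset (\RP^n)^*$, whose boundary points are the supporting hyperplanes of $\Om$; the contragredient representation makes $\Gam$ divide $\Om^*$ as well, and the standard dictionary of convex duality says that $\Om$ is strictly convex if and only if $\bd\Om^*$ is $C^1$, while $\bd\Om$ is $C^1$ if and only if $\Om^*$ is strictly convex. Since $\del$-hyperbolicity is an invariant of the abstract group $\Gam$ and not of a particular action, condition (iii) holds for the $\Om$-action exactly when it holds for the $\Om^*$-action. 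Thus, once (i) $\Leftrightarrow$ (iii) is proved for every divisible domain, the duality dictionary gives (ii for $\Om$) $\Leftrightarrow$ (i for $\Om^*$); applying the core equivalence to $\Om^*$ converts this into (iii for $\Om^*$), which agrees with (iii for $\Om$) because $\del$-hyperbolicity is intrinsic to $\Gam$; and (iii for $\Om$) $\Leftrightarrow$ (i for $\Om$) by the core equivalence once more. Chaining these identifies (ii) with (i), so duality reduces the entire regularity half of the statement to the single pair (i) $\Leftrightarrow$ (iii).

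For (i) $\Leftrightarrow$ (iii), I first invoke the \v{S}varc--Milnor lemma: $\Gam$ acts properly discontinuously and cocompactly by isometries on the proper geodesic space $(\Om, d_\Om)$ (projective segments are geodesics and closed Hilbert balls are compact), so $\Gam$ is quasi-isometric to $(\Om, d_\Om)$, and, Gromov hyperbolicity being a quasi-isometry invariant, (iii) is equivalent to Gromov hyperbolicity of $(\Om, d_\Om)$. The easy direction is (not i) $\then$ (not iii): if $\bd\Om$ contains a nondegenerate segment, one produces a pair of distinct projective-line geodesics that remain within bounded Hausdorff distance while their endpoints spread apart --- a flat strip --- which defeats the divergence-of-geodesics criterion for hyperbolicity. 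The hard direction is (i) $\then$ (iii): assuming $\Om$ strictly convex and divisible, one must show that geodesic triangles in $(\Om, d_\Om)$ are uniformly thin. The classical Flat Plane Theorem is unavailable here, since Hilbert metrics are not $\CAT(0)$ outside the ellipsoid, so I would argue directly, using cocompactness to upgrade pointwise strict convexity to a \emph{uniform} roundness estimate on $\bd\Om$ and then controlling the Hilbert cross-ratio along the three sides of a triangle to extract a uniform thinness constant.

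The dynamical condition (iv) is then squeezed between (i) and (iii). For (iv) $\then$ (i), I argue contrapositively: a segment in $\bd\Om$ yields an isometrically embedded flat in $M$, along which the geodesic flow possesses a flow-transverse direction of zero expansion, contradicting the uniform hyperbolicity of an Anosov flow. For the converse direction from (i) and (ii), I would build the Anosov splitting out of the boundary dynamics: with $\Om$ strictly convex and $\bd\Om$ of class $C^1$, each geodesic has well-defined and distinct forward and backward endpoints in $\bd\Om$, and the stable (resp. unstable) set of $(x,v)$ is the set of unit vectors sharing its forward (resp. backward) endpoint. Strict convexity and $C^1$ regularity organize these sets into transverse foliations complementary to the flow direction, and the Gromov hyperbolicity furnished by (iii), together with cocompactness, forces contraction along stable leaves and expansion along unstable leaves to be uniformly exponential --- precisely the Anosov property.

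The main obstacle is the implication (i) $\then$ (iii): extracting genuine Gromov hyperbolicity from strict convexity without the crutch of nonpositive curvature. Strict convexity on its own is far too weak; it is the cocompact action that must be leveraged to convert a pointwise condition into the uniform estimates that yield thin triangles and, downstream, the uniform exponential rates needed for the Anosov structure in (iv). By comparison, the duality bookkeeping, the \v{S}varc--Milnor translation, and the ``segment produces a flat'' obstructions used for (not i) $\then$ (not iii) and (iv) $\then$ (i) are all comparatively soft.
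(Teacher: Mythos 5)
A point of order first: the paper does not prove Theorem \ref{thm:benoistsdichotomy} at all. It is quoted as background from Benoist ([Ben1, Theorem 1.1]), and the only commentary the paper offers on its proof is the remark immediately following the statement, namely that Benz\'ecri's work on the $\PGL$-orbits of marked properly convex sets is ``essential'' to it. So your proposal can only be measured against Benoist's published argument, and by that yardstick your architecture is essentially the right one: the duality dictionary exchanging strict convexity of $\Om$ with $C^1$-regularity of $\bd\Om^\ast$ (together with the fact that $\Gam$ and its dual copy are the same abstract group, so (iii) transfers between $\Om$ and $\Om^\ast$), a core equivalence (i) $\Leftrightarrow$ (iii) via \v{S}varc--Milnor, soft ``flat strip'' obstructions for the negative directions, and the construction of the Anosov splitting from horospheres and boundary endpoints --- the latter being exactly the geometric description of stable and unstable sets that this paper reuses as [Ben1, Lemma 3.4] in Proposition \ref{prop:globalstables}.

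The genuine gap is the step you flag yourself and then wave past: (i) $\then$ (iii), and equally the uniform exponential rates needed for (i) $\then$ (iv). Your plan is to ``use cocompactness to upgrade pointwise strict convexity to a uniform roundness estimate,'' but cocompactness of the $\Gam$-action on $\Om$ cannot do this on its own: the failure of uniform thinness is witnessed by triangles degenerating toward $\bd\Om$, where $\Gam$ does not act cocompactly in any useful sense, so no element of $\Gam$ returns such a configuration to a fixed compact set. The missing idea --- precisely the one the paper's remark points at --- is Benz\'ecri's theorem that $\PGL(n+1,\R)$ acts properly and cocompactly on the space of marked properly convex bodies, combined with Benoist's lemma that divisibility forces the $\PGL$-orbit of $\Om$ to be closed there. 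One renormalizes a sequence of increasingly fat triangles by projective transformations (generally \emph{not} in $\Gam$), extracts a limit convex body which by closedness of the orbit is again projectively equivalent to $\Om$, and observes that the limiting flat configuration contradicts strict convexity; the same renormalization gives the uniform contraction rates in (iv). Without this device the pointwise-to-uniform step has no proof. A smaller repair: in your contrapositive for (iv) $\then$ (i), a segment in $\bd\Om$ yields a flat strip in $\Om$, not an isometrically embedded flat in $M$ --- that stronger conclusion is special to the three-dimensional indecomposable situation of Theorem \ref{thm:ben3mfldgeom}(f) --- but a strip already carries a bounded, flow-invariant transverse variation field along one of its geodesics, and that suffices to contradict an Anosov splitting.
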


Essential to Benoist's theorem is Benzecri's thesis work on the $\PGL$-orbits of marked properly convex sets in projective space \cite{Benz60}. 
In fact, an application of the work of Benzecri shows that in dimension two, a divisible $\Om$ is either strictly convex with $C^1$-boundary or a projective triangle.

\subsection{The Benoist 3-manifolds}

A natural question is whether, 
as in dimension two, a divisible Hilbert geometry in any dimension is
either strictly convex with $C^1$-boundary or a simplex. 
Benoist proved, in the contrary, 
existence of 
Hilbert geometries in dimension three which are nonstrictly convex and indecomposable via a modification of the Kac--Vinberg Coxeter construction \cite[Proposition 1.3]{Ben4}.
Moreover, Benoist proved geometric properties of such Hilbert geometries.
Before stating the theorem, we define some terms: 
let $C$ be the cone in $\R^{n+1}$ over a properly convex domain $\Omega$ in $\RP^n$, and define
$C$ to be properly convex if and only is $\Omega$ is. Then $\Omega$ is \emph{decomposable} if there
exist vector subspaces $V_1,V_2 \subset \R^{n+1}$ and properly convex cones $C_1\subset V_1,
C_2\subset V_2$ such that $C=C_1+C_2$. Else, $\Omega$ is \emph{indecomposable}. Note that a simplex
is always decomposable.

A \emph{properly embedded triangle} in $\Om$ 
is a projective triangle $\tri \subset \Om$ such that $\bd\tri\subset \bd\Om$. Let $\mathcal T$
denote the collection of triangles $\tri$ properly embedded in $\Om$, and $\Gam_\tri :=
\Stab_\Gam(\tri) = \{\gam\in\Gam \mid \gam \tri=\tri\}$ be the subgroup of $\Gam$ stabilizing
$\tri\in \mathcal T$. 

\begin{theorem}[{\cite[Theorem 1.1]{Ben4}}]
	\label{thm:ben3mfldgeom}
	Let $\Gam < \SL(4,\R)$ be a discrete torsion-free subgroup which divides an open, properly
	convex, 
	indecomposable 
	$\Om\subset \RP^3$, and $M = \Om/\Gam$. Then 
	\begin{enumerate}[(a)]
		\item Every subgroup in $\Gam$ isomorphic to $\Z^2$ stabilizes a unique triangle
			$\tri \in \mathcal T$. 
		\item If $\tri_1, \tri_2\in\mathcal T$ are distinct, then
			$\cl{\tri_1}\cap\cl{\tri_2} = \varnothing$. 
		\item For every $\tri\in \mathcal T$, the group $\Gam_\tri$ contains an index-two
			$\Z^2$ subgroup. 
		\item The group $\Gam$ has only finitely many orbits in $\mathcal T$. 
		\item The image in $M$ of triangles in $\mathcal T$ is a finite collection
			$\mathcal F$
			of disjoint tori and Klein bottles. 
			If one cuts $M$ along
			each $\T\in \mathcal F$, each of the resulting connected components is
			atoroidal.
		\item Every nontrivial line segment is included in the boundary of some
			$\tri\in\mathcal T$. 
		\item If $\Om$ is not strictly convex, then the set of vertices of triangles in
			$\mathcal T$ is dense in $\bd\Om$. 
	\end{enumerate}
\end{theorem}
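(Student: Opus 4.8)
The plan is to build everything on the correspondence between rank-two abelian subgroups of $\Gam$, two-dimensional flats in the Hilbert metric, and properly embedded triangles, and then to read off the topology from cocompactness. The geometric engine throughout is Benzécri's compactness theorem for the $\PGL$-action on marked properly convex domains, which lets one take renormalized limits of the $\Gam$-action at the boundary, together with the structural fact (Vey, Benoist) that every element of a group dividing a properly convex $\Om$ is semisimple. Since we are in the nonstrictly convex case, Theorem \ref{thm:benoistsdichotomy} tells us $\Gam$ is not $\delta$-hyperbolic and the flow is not Anosov, so flats must be present; the content of the theorem is that these flats are rigidly organized.

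First I would treat (a)--(c). Given $A \cong \Z^2 < \Gam$, semisimplicity and commutativity let me simultaneously $\R$-diagonalize $A$ in $\SL(4,\R)$; the common eigenlines give up to four points of $\RP^3$, and I would argue that exactly three of them lie in $\cl\Om$ and span a $2$-simplex $\tri$ whose sides lie in $\bd\Om$, so that $\tri$ is a properly embedded triangle preserved by $A$. Endowed with the Hilbert metric, $\tri$ is isometric to $\R^2$ with a hexagonal norm, i.e.\ a genuine flat, and $A$ acts on it as a cocompact lattice of translations; uniqueness in (a) follows because such a flat determines its three ideal vertices, which are forced to be the eigenlines of $A$. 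For (b), if $\cl{\tri_1}\cap\cl{\tri_2}\neq\varnothing$ I would derive a contradiction from proper convexity: a shared boundary point together with the two triangles would either violate convexity of $\Om$ or produce a strictly larger flat, impossible in dimension three. For (c), the full stabilizer $\Gam_\tri$ acts on the flat $\tri\cong\R^2$ cocompactly (its image in $M$ is compact), so by Bieberbach it is virtually $\Z^2$; constraining the point group to symmetries realizable by projective transformations permuting the three vertices of $\tri$ forces the translation lattice to have index at most two, so that $\Gam_\tri$ contains an index-two $\Z^2$, the index-two extension being realized by an orientation-reversing glide.

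The topological statements (d)--(e) are then bookkeeping on the compact manifold $M$. By (b) the images of the triangles in $M$ are pairwise disjoint embedded surfaces, each flat and covered by $\R^2$, hence a torus or Klein bottle by (c); a compact manifold contains only finitely many disjoint incompressible surfaces, giving (d) and the finiteness of $\mathcal F$ in (e). Cutting $M$ along $\mathcal F$, the atoroidality of each piece is exactly the statement that every essential $\Z^2$ has already been accounted for: any $\Z^2 < \Gam$ stabilizes a triangle by (a), whose image is one of the tori or Klein bottles we have removed, so no essential torus survives in the complementary pieces.

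The crux is (f), from which (g) follows quickly. To prove that every nontrivial segment $s\subset\bd\Om$ lies in the boundary of some $\tri\in\mathcal T$, I would exploit recurrence of the cocompact $\Gam$-action: renormalizing a sequence $\gam_n$ along which $s$ recurs, Benzécri compactness produces a limiting properly convex domain and a limiting flat containing $s$, and semisimplicity upgrades the stabilizer of $s$ to a rank-two abelian subgroup whose invariant triangle, via (a), contains $s$ in its boundary. This is the step I expect to be hardest, since it requires controlling the asymptotic geometry of $\bd\Om$ along a segment and ruling out degenerate renormalized limits; it is also where the three-dimensional hypothesis is genuinely used. Finally, for (g), when $\Om$ is not strictly convex part (f) guarantees at least one triangle, so the set of triangle vertices is a nonempty $\Gam$-invariant subset of $\bd\Om$; since the cocompact action of $\Gam$ on $\bd\Om$ is minimal, this set is dense.
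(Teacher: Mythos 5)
First, a point of orientation: the paper does not prove this statement at all. Theorem \ref{thm:ben3mfldgeom} is imported verbatim from Benoist \cite[Theorem 1.1]{Ben4}, and everything in the present paper is built on top of it; so your proposal is not competing with an internal argument but with the whole of Benoist's paper. At the level of architecture your outline does track the known proof: the correspondence between $\Z^2$ subgroups, Hilbert-metric flats, and properly embedded triangles; Bieberbach plus torsion-freeness for (c) (note that what kills the order-$3$ point group is torsion-freeness --- the only torsion-free wallpaper groups are the torus and Klein bottle groups --- not any failure of projective realizability, since the triangle does admit an order-$3$ projective symmetry); JSJ-style bookkeeping for (d)--(e); Benz\'ecri limits for (f); and minimality of the boundary action for (g), which is itself a nontrivial result of the same paper (\cite[Proposition 3.10]{Ben4}), not a free input.

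The genuine gaps sit at exactly the load-bearing points. First, your blanket input that ``every element of a group dividing a properly convex $\Om$ is semisimple'' is not available, and in the present paper the tameness of isometries is a \emph{consequence} of the theorem, not a hypothesis: Proposition \ref{prop:noquasihyp} excludes quasi-hyperbolic elements by invoking part (f), so assuming semisimplicity up front risks circularity. Moreover, simultaneous diagonalization over $\R$ requires all eigenvalues to be real (and the convexity arguments need them positive); a commuting pair in $\SL(4,\R)$ can a priori have complex middle eigenvalues, and ruling this out for elements of $\Z^2$ subgroups is precisely \cite[Corollary 2.4]{Ben4} --- one of the theorems you would have to prove, not quote. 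Second, and most seriously, your sketch of (f) has no mechanism: a segment $s\subset\bd\Om$ has, a priori, \emph{trivial} stabilizer in $\Gam$, so there is nothing to ``upgrade'' to a rank-two abelian group; and the flat produced by a Benz\'ecri renormalization lives in a limit domain $\Om_\infty$, not in $\Om$, so transferring it back and manufacturing nontrivial elements of $\Gam$ preserving a triangle through $s$ is the entire content of the hardest part of Benoist's argument. Part (b) has the same character: ``violates convexity or produces a larger flat'' is a hope, not an argument --- two properly embedded triangles sharing a single vertex are not obviously excluded by convexity or dimension, and Benoist's disjointness proof runs through the dynamics of the stabilizing subgroups. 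In short: right skeleton, but the two theorems hiding inside this theorem --- real positive eigenvalues for $\Z^2$ subgroups, and segments lying on triangle boundaries --- are assumed rather than proved.
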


We will call the compact quotients of nonstrictly convex, indecomposable, divisible Hilbert
geometries in dimension three \emph{Benoist 3-manifolds}. 
The topological decomposition as in \ref{thm:ben3mfldgeom}(e) is an example of a Jaco--Shalen--Johansson
(JSJ) decomposition \cite{jaco-shalen,johannson}. Benoist remarks after Theorem
\ref{thm:ben3mfldgeom} that as a
consequence of Thurston's geometrization, the
atoroidal components of the quotient are diffeomorphic to finite volume hyperbolic 3-manifolds
\cite[pp. 4-5]{Ben4}.

  \section{Properties of automorphisms of Benoist's 3-manifolds}
  \label{sec:automorphisms}

Let $\Om\subset\RP^n$ be a properly convex domain. Then for any $g\in\Aut(\Om)$, we can
define the \emph{translation length} of $g$ by 
\[
	\tau(g):= \inf_{x\in\Om} d_\Om(x,g.x).
\]
An {\em axis} of $g$ is a $g$-invariant projective line in $\Omega$.

We will diverge slightly from the literature here in our terminology. 
We define 
$g\in\Aut(\Om)$ to be \emph{hyperbolic} if $\tau(g)>0$ and the infimum is attained
along a unique axis of $g$.  
Any other $f\in\Aut(\Om)$ for which
$\tau(f)$ is positive and realized, but not along an axis of $f$, 
will be
called \emph{flat}. Typically, both flat and hyperbolic automorphisms are called hyperbolic 
(c.f. \cite{CrMar14}), and in 
the strictly convex case there would be no need for the distinction we introduce here. 
A projective transformation is \emph{quasi-hyperbolic} if $\tau(g)>0$ and the infimum is not
attained, \emph{parabolic} if $\tau(g)=0$ and the infimum is not attained, and \emph{elliptic} if
$\tau(g)=0$ and the infimum is attained.


There is an important property of the Benoist 3-manifolds which has dynamical implications for the
group elements. If $\Omega/\Gamma$ is a Benoist 3-manifold, then $\Omega$ must be indecomposable,
hence $\Gamma$ is irreducible \cite{vey70}.  
	A subgroup $H<\PSL(4,\R)$ is \emph{irreducible} if it does not stabilize a projective point,
	line, or plane in $\RP^3$, and $H$ is \emph{strongly irreducible} if every finite-index
	subgroup is irreducible.  
Since $\Gam$ is irreducible, all
elements of
$\Gam$ are {\em positively proximal} \cite[Proposition 1.1]{BenCC}, meaning their top eigenvalues are real
and positive
and have  one-dimensional eigenspaces. Since $\Gam$ is a group, each $g\in\Gam$ must in fact be {\em
biproximal},
meaning the top and bottom eigenvalues each have one-dimensional eigenspaces, and we note that they
must also both be real and positive. 
In fact, for the Benoist 3-manifolds, all elements of any $\mathbb Z^2$ subgroup
of $\Gam$ have only real positive eigenvalues \cite[Corollary 2.4]{Ben4}.
\begin{proposition}
	Let $M=\Om/\Gam$ be a Benoist 3-manifold with discrete, torsion-free dividing group $\Gam$.
	Then 
	there are no quasi-hyperbolic group elements in $\Gam$. 
	\label{prop:noquasihyp}
\end{proposition}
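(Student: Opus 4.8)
The plan is to verify that for every nontrivial $g\in\Gam$ the infimum defining $\tau(g)$ is attained; since a quasi-hyperbolic element is by definition one with $\tau(g)>0$ whose infimum is \emph{not} attained, establishing attainment for every $g$ rules out quasi-hyperbolic elements. Fix $g\neq 1$. As noted above, $g$ is biproximal, so it has a top eigenvalue $\lam_1>0$ and a bottom eigenvalue $\lam_4>0$, each with one-dimensional eigenspace and with strictly dominant, respectively strictly subdominant, modulus; in particular $\lam_1>\lam_4$. Let $x_g^+,x_g^-\in\RP^3$ be the corresponding attracting and repelling fixed points. A standard asymptotic computation using $g^{k}z\to x_g^+$ and $g^{-k}z\to x_g^-$ for $z\in\Om$ shows that the stable translation length $\tau_\infty(g):=\lim_{k\to\infty}\tfrac1k d_\Om(z,g^k z)$ equals $\tfrac12\log(\lam_1/\lam_4)>0$, while subadditivity of $k\mapsto d_\Om(x,g^k x)$ gives $\tau_\infty(g)\le\tau(g)$. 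Thus $\tau(g)>0$, so $g$ is neither elliptic nor parabolic, and it remains only to show $\tau(g)$ is attained. Note also that $x_g^\pm\notin\Om$, since an interior fixed point would force $\tau(g)=0$.

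Consider the closed segment $S=[x_g^-,x_g^+]\subset\cl\Om$, which lies in $\cl\Om$ by convexity. Since $\Om$ is open and convex and $x_g^\pm\in\cl\Om$, the relative interior of $S$ is either entirely contained in $\Om$ or entirely contained in $\bd\Om$. In the first case $S$ is an axis of $g$: for $x$ in the relative interior of $S$ the line $\cl{x\,g.x}$ meets $\bd\Om$ exactly at $x_g^\pm$, so the cross-ratio defining $d_\Om(x,g.x)$ reduces to the eigenvalue ratio and gives $d_\Om(x,g.x)=\tfrac12\log(\lam_1/\lam_4)=\tau_\infty(g)$. Combined with $\tau_\infty(g)\le\tau(g)\le d_\Om(x,g.x)$ this forces equality, so $\tau(g)$ is attained along $S$ and $g$ is not quasi-hyperbolic.

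The main obstacle is the second case, in which $S$ is a nondegenerate segment lying in $\bd\Om$; here the plan is to invoke Benoist's control of boundary segments to show $g$ is in fact flat. By Theorem \ref{thm:ben3mfldgeom}(f), $S$ is contained in $\bd\tri$ for some properly embedded triangle $\tri\in\mathcal T$, and this $\tri$ is unique since distinct triangles have disjoint closures (Theorem \ref{thm:ben3mfldgeom}(b)). Because $g$ fixes $x_g^+$ and $x_g^-$, it preserves $S$, so $g.\tri$ is a properly embedded triangle with $S=g.S\subset\bd(g.\tri)$; uniqueness forces $g.\tri=\tri$, whence $g\in\Gam_\tri$. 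By Theorem \ref{thm:ben3mfldgeom}(c) the group $\Gam_\tri$ is virtually $\Z^2$ acting on $\tri$ by translations, and since $\bd\tri\subset\bd\Om$ the restriction of $d_\Om$ to $\tri$ is the intrinsic Hilbert metric $d_\tri$ of the flat triangle. On this flat the (virtual) translation $g$ realizes its translation length, so $\tau_\tri(g)=\tau_\infty(g)$ is attained; combined with $\tau_\infty(g)\le\tau(g)\le\tau_\tri(g)$ this yields $\tau(g)=\tau_\infty(g)$, attained at points of $\tri$, so $g$ is flat rather than quasi-hyperbolic. In both cases $\tau(g)$ is attained, so no element of $\Gam$ is quasi-hyperbolic. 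The crux is thus Case 2: translating the degeneration of the axis into a boundary segment into membership in a triangle stabilizer, and confirming that the Hilbert translation length on $\Om$ is not decreased off the flat, which the identification $\tau=\tau_\infty$ on the triangle supplies.
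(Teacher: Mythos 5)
Your proof is correct and follows essentially the same route as the paper: use biproximality to identify the candidate axis joining the top and bottom eigenvectors, split on whether it meets $\Om$ or lies in $\bd\Om$, and in the boundary case apply Theorem \ref{thm:ben3mfldgeom}(f) and (b) to conclude $g\in\Stab(\tri)$, where the translation length is realized on the flat. The only difference is cosmetic: you flesh out the final step (the sandwich $\tau_\infty(g)\le\tau(g)\le\tau_\tri(g)$ and the identification $d_\Om\restriction_\tri=d_\tri$) that the paper leaves as an assertion.
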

\begin{proof}

	Suppose $\tau(g)>0$. Let the eigenvalues of a representative of $g$ in $\SL(4,\R)$ be given by 
	$\lambda_i(g)$ such that 
	$\lambda_0(g)>|\lambda_1(g)|\geq |\lambda_2(g)|
	> \lambda_3(g)$. 
	It is straightforward to
	verify that since $g$ is biproximal, 
	$\tau(g)=\log \frac{\lambda_0(g)}{\lambda_3(g)}$ and is realized
	along a projective line joining the eigenvectors associated to 
	$\lambda_0$ and $\lambda_3$. 
	Again since $g\in\Aut(\Om)$, we may choose a maximal projective line segment in $\cl{\Om}$ preserved by
	$g$ along which $\tau(g)$ is realized, which we denote  
	$\ell_g$. 
	To be quasi-hyperbolic, $\ell_g$ must be contained in $\bd\Om$.
	Then by Theorem \ref{thm:ben3mfldgeom}(f), $\ell_g\subset\bd\tri$ for some properly embedded
	triangle $\tri$
	in $\Om$. 
	  Since $g$ acts by projective transformations and preserves $\Omega$, for any properly
	  embedded triangle $\tri$ we have that $g\tri$ is also a properly embedded triangle. Then since
	  $g$ stabilizes $\ell_g$ we have $\ell_g\subset g\cl{\tri}\cap\cl{\tri}\neq\varnothing$
	  implying
	  $g\tri=\tri$ by Benoist's Theorem \ref{thm:ben3mfldgeom}(b). 
	Then $g\in\Stab(\tri)$ and thus $\tau(g)$ is realized in $\tri$.
\end{proof}

We now introduce new terminology for points in $\bd\Omega$.  
We say that $\xi\in\bd\Om$ is \emph{proper} if there is a unique supporting hyperplane to $\Om$ at
$\xi$, where a hyperplane $H\subset \RP^n\smallsetminus \Om$ is a {\em supporting hyperplane} at $\xi$ if $\xi\in
H\cap \bd\Om$. Also, $\xi\in\bd\Om$ is \emph{extremal} if there is no open line segment containing $\xi$ embedded
in $\bd\Om$; in other words, $\xi$ is extremal if for any supporting hyperplane $H$ at $\xi$, we have 
$H\cap
\bd\Om=\{\xi\}$.
Note that by Benoist's Theorem \ref{thm:ben3mfldgeom}(f) and duality, proper extremal
points form the compliment of the boundaries of properly embedded triangles. 

\begin{prop}
	Let $M=\Om/\Gam$ be a Benoist 3-manifold with discrete, torsion-free dividing group $\Gam$.
	Then for all $g\in\Gam$, 
        \begin{itemize}
		\item $g$ is {hyperbolic} if and only if $g$ has exactly two fixed points $g^-,g^+$
		  in $\cl{\Om}$ which are proper extremal points in the boundary. These fixed points
		  are respectively repelling and attracting under the dynamics of $g$ on
		  $\cl{\Om}$. 
	\item $g$ is flat if and only if $g\in\Stab(\tri)$ for some properly embedded $\tri$. 
        \end{itemize}
These are the only possible automorphisms of a divisible, 
indecomposable 
domain in $\RP^3$.        
        \label{prop:nscisoms}
\end{prop}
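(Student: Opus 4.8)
The plan is to prove this by a case analysis on the eigenvalue data of $g$, combining the classification of projective transformations (hyperbolic, flat, quasi-hyperbolic, parabolic, elliptic) with the biproximality fact and the structural results of Benoist. Since $M$ is closed and $\Gam$ is torsion-free, every nontrivial $g\in\Gam$ acts freely and cocompactly, so $\tau(g)>0$ and the infimum is attained; this immediately rules out parabolic and elliptic elements. Proposition \ref{prop:noquasihyp} rules out quasi-hyperbolic elements. Thus every $g\in\Gam$ is either hyperbolic or flat in the sense defined, and the content of the proposition is to characterize each geometrically.

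\textbf{The flat case.} I would first establish the backward implication: if $g\in\Stab(\tri)$ for some properly embedded triangle $\tri$, then $\tau(g)$ is realized inside $\tri$ (which is isometric to $\R^2$ with a hexagonal norm, so the minimizing set is a flat and the infimum is not attained along a unique axis), hence $g$ is flat by definition. For the forward implication, suppose $g$ is flat, so $\tau(g)>0$ is realized but not along an axis. Arguing as in Proposition \ref{prop:noquasihyp}, I would choose a maximal $g$-invariant projective segment $\ell_g$ in $\cl\Om$ realizing $\tau(g)$; the failure of uniqueness of the minimizing geodesic forces the translation to occur in a region foliated by parallel geodesics, which by the Finsler geometry can only happen inside a properly embedded triangle. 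Invoking Theorem \ref{thm:ben3mfldgeom}(f) to place the relevant segment in $\bd\tri$ and then Theorem \ref{thm:ben3mfldgeom}(b) (disjointness of triangle closures) to conclude $g\tri=\tri$, I obtain $g\in\Stab(\tri)$.

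\textbf{The hyperbolic case.} Here I would work directly from the eigenvalue structure. Let the eigenvalues satisfy $\lam_0(g)>|\lam_1(g)|\geq|\lam_2(g)|>\lam_3(g)$; biproximality guarantees the top and bottom eigenvalues $\lam_0,\lam_3$ are real, positive, and simple. As noted in Proposition \ref{prop:noquasihyp}, $\tau(g)=\log(\lam_0/\lam_3)$ is realized along the projective line $\ell_g$ joining the eigenlines $g^-=[\text{eigenvector of }\lam_3]$ and $g^+=[\text{eigenvector of }\lam_0]$, which are the repelling and attracting fixed points in $\cl\Om$. If $g$ is hyperbolic (i.e. not flat), then by the flat characterization just proved $g\notin\Stab(\tri)$ for any $\tri$, so $\ell_g$ meets no properly embedded triangle, and by Theorem \ref{thm:ben3mfldgeom}(f) together with the duality remark, the endpoints $g^\pm$ must be proper extremal points of $\bd\Om$. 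The uniqueness of $g^\pm$ as fixed points in $\cl\Om$ follows from proximality at both ends together with a contraction argument on the complement. Conversely, if $g$ has exactly two fixed points that are proper extremal, then the minimizing axis $\ell_g$ cannot lie in any $\bd\tri$, so $g$ is not flat, hence hyperbolic.

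\textbf{Main obstacle.} I expect the crux to be the forward direction of the flat characterization, namely showing that non-uniqueness of the minimizing geodesic genuinely forces $g$ to stabilize a triangle rather than merely touch one. This requires understanding precisely how the realized-but-nonunique infimum interacts with the Finsler structure: I must rule out the possibility that $\tau(g)$ is realized along several geodesics without $g$ preserving an entire triangle. The key leverage is that in dimension three the only flats are the properly embedded triangles (Theorem \ref{thm:ben3mfldgeom}(f) and the JSJ/atoroidal structure), so any positive-codimension family of minimizing geodesics must assemble into the $\R^2$-flat of a triangle; making this rigorous via the biproximal eigenvalue picture, where $|\lam_1|=|\lam_2|$ or a nontrivial middle eigenspace signals the flat, is where the care is needed.
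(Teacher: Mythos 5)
Your overall skeleton is the same as the paper's: you eliminate elliptic elements by discreteness and torsion-freeness, parabolic elements by compactness of $M$, and quasi-hyperbolic elements by Proposition \ref{prop:noquasihyp}, and then characterize the remaining elements (positive, attained translation length) using biproximality together with Theorem \ref{thm:ben3mfldgeom}(f) and (b). The paper compresses this last step into the single assertion that it is ``straightforward'' given biproximality; your proposal is an attempt to expand exactly that step, with the right tools in hand.

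However, the mechanism you propose for the step you yourself single out as the crux is wrong. You write that flatness is signaled spectrally, by ``$|\lam_1|=|\lam_2|$ or a nontrivial middle eigenspace.'' It is not: by \cite[Corollary 2.4]{Ben4}, quoted in Section \ref{sec:automorphisms}, every element of a $\Z^2$ subgroup of $\Gam$ has real positive eigenvalues, and a typical element $g$ of $\Stab(\tri)\cap\Gam$ has four distinct simple eigenvalues $\lam_0>\lam_1>\lam_2>\lam_3>0$ --- exactly the same spectral shape as a hyperbolic element. (Three of its eigenvectors are the vertices of $\tri$, and the top and bottom ones must be vertices: the displacement of $g$ on $\tri$ is constant, so if $\lam_0$ or $\lam_3$ lived off the plane of $\tri$, that constant would be strictly smaller than $\log(\lam_0/\lam_3)=\tau(g)$, contradicting that $\tau(g)$ is an infimum.) So the flat/hyperbolic dichotomy is positional, not spectral: it is decided by where the line $L$ joining the top and bottom eigenvectors --- both of which lie in $\cl{\Om}$ by the attracting/repelling dynamics --- sits relative to $\Om$. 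If $L$ meets $\Om$, then $L\cap\Om$ is an axis realizing $\tau(g)$, and it is the unique one, since any minimizing axis must join two fixed points whose eigenvalue ratio is $\lam_0/\lam_3$ and the extreme eigenspaces are one-dimensional; hence $g$ is hyperbolic. If instead $L\cap\cl{\Om}$ is a segment in $\bd\Om$, then Theorem \ref{thm:ben3mfldgeom}(f) places it in some $\bd\tri$ and Theorem \ref{thm:ben3mfldgeom}(b) forces $g\tri=\tri$, so $g$ is flat; this is literally the argument of Proposition \ref{prop:noquasihyp}, and no analysis of ``regions foliated by parallel geodesics'' is needed. The same positional argument is also what your hyperbolic case is missing: a ``contraction argument'' cannot exclude a third fixed point $p\in\bd\Om$ coming from a middle eigenvalue $\mu$, because such a point is simply fixed, neither attracted nor repelled. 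What excludes it is that the segment $[p,g^+]$ either crosses $\Om$ --- producing an invariant line of displacement $\log(\lam_0/\mu)<\tau(g)$, impossible --- or lies in $\bd\Om$, which by (f) and (b) again forces $g\in\Stab(\tri)$, contradicting hyperbolicity.
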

\begin{proof}
	Since $\Gam$ is discrete and torsion-free, there are no elliptic isometries in $\Gam$. Since $M$ is
	compact, there are no parabolic isometries in $\Gam$ (see also \cite[Lemma 2.8]{Ben4}).
	By Proposition \ref{prop:noquasihyp}, there are no quasi-hyperbolic elements of $\Gam$. 
	Thus, it suffices to characterize the dynamics of group elements with translation length realized in
	$\Om$. 
	Since all elements of $\Gamma$ are biproximal, this is straightforward.
\end{proof}

\subsection{Lengths of hyperbolic orbits}
The goal of this subsection is to prove that the additive subgroup of $\R$ generated by lengths of
closed hyperbolic orbits is dense via 
Zariski density of an immersed hyperbolic surface group in $\Gam$. 

\begin{fact}[{\cite{masterszhang,BaCo15}}]
The fundamental group of a complete, finite volume, noncompact 
hyperbolic 3-manifold contains a closed hyperbolic quasi-Fuchsian 
surface subgroup. 
\label{fct:surfacesubgroup}
\end{fact}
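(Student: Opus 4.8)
The plan is to follow the Kahn--Markovic surface subgroup construction, adapted to the cusped finite-volume setting as carried out by Masters--Zhang and, independently, Baker--Cooper. Write the given complete finite-volume hyperbolic $3$-manifold as $N=\H^3/\Lambda$ with $\Lambda<\PSL(2,\C)$ its fundamental group. The strategy is to assemble a closed surface from a large collection of nearly-geodesic immersed pairs of pants whose cuffs are closed geodesics of complex length approximately $R$ for a large real parameter $R$, and then to verify that a surface built this way is $\pi_1$-injective with quasi-Fuchsian (quasi-circle) limit set.

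First I would set up the frame flow on the frame bundle of $N$ and invoke its exponential mixing, which persists in finite volume by the spectral gap and decay of matrix coefficients for $\PSL(2,\C)$. This mixing is the engine that produces an abundance of \emph{good pants}: immersed pants whose three cuffs are geodesics of complex length within $\epsilon$ of $R$ and whose geometry is $\epsilon$-close to that of a standard totally geodesic pair of pants. Exponential mixing also yields the crucial \emph{equidistribution of feet}: at each long closed geodesic $\gamma$ of length near $R$, the unit normal vectors (``feet'') of the good pants abutting $\gamma$ are nearly uniformly distributed around $\gamma$, with a density that is nearly invariant under the $\Z$-action of sliding along $\gamma$.

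Next I would perform the gluing. Using the equidistribution of feet together with a Hall-type matching argument (or a transport/rounding argument on the feet measures), one pairs up the free cuffs of good pants along common geodesics, gluing with a shear chosen so that the feet are offset by approximately one unit in the Kahn--Markovic normalization. This produces a closed, nearly-geodesic immersed surface $S\to N$. The near-geodesic condition, propagated across the gluing loci by the approximate ``$+1$'' rotation, forces the developing map to have small local distortion everywhere, and a now-standard comparison argument shows the immersion is $\pi_1$-injective and that $\pi_1(S)<\Lambda$ is quasi-Fuchsian, its limit set a quasi-circle in $\bd\H^3$.

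The main obstacle is the presence of cusps. Unlike the closed case, the good pants must be constructed and glued while avoiding the cusp neighborhoods, since a cuff wandering deep into a cusp would ruin both the mixing estimates (where the injectivity radius degenerates) and the quasi-Fuchsian conclusion (one risks producing accidental parabolics). This is exactly the content supplied by Masters--Zhang and Baker--Cooper: one restricts the frame-flow counting and the feet-equidistribution to the thick part, using that the cusp (thin) part carries only a small, controllable fraction of the measure at the relevant scale, and one shows the matching can still be completed with all pants and all gluing corridors remaining in the thick part. Verifying that this truncation does not destroy the approximate sliding-invariance of the feet measures---so that the Hall matching still closes up---is the technical heart of the argument, after which the quasi-Fuchsian conclusion follows as in the closed case.
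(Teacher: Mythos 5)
The paper does not prove this statement at all: it is imported as a Fact, with the proof outsourced to the cited references, and is used downstream only as a black box (to produce the subgroup $\Sigma$ feeding into Lemma \ref{lem:baker_cooper_irred} and Proposition \ref{prop:densespec}). So the comparison must be with those references, and your proposal follows neither of them. Masters--Zhang construct closed quasi-Fuchsian surfaces in cusped manifolds by combining noncompact essential surfaces along the cusp tori, with geometric-limit arguments to rule out accidental parabolics; Baker--Cooper derive the result from their convex combination theorem, assembling convex pieces of geometrically finite surfaces so that the resulting closed surface group is convex, hence $\pi_1$-injective and quasi-Fuchsian. What you outline instead is the Kahn--Markovic good-pants program pushed to finite volume. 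That route does ultimately succeed, but it is Kahn--Wright's theorem, a substantially later and technically heavier argument than either cited work, and your sketch does not supply the idea that makes it work.

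The genuine gap is exactly at the step you flag as the ``technical heart'': handling the cusps by restricting the pants counting and feet equidistribution to the thick part and arguing that the thin part carries a small fraction of the measure. This fails. A good curve of length roughly $2R$ can climb to depth on the order of $R$ inside a cusp, and every good pants having such a curve as a cuff necessarily enters the thin part with it; for those curves the truncation discards essentially all of the feet measure, not a small fraction of it, so the approximate shift-invariance needed for the Hall matching is destroyed precisely where it is needed. Nor can one simply delete those curves: each pair of pants has three cuffs, so discarding a pants to fix one curve creates an uncontrolled defect in the feet measure on its other two cuffs, and the marriage condition fails there instead. Moreover, the equidistribution error terms coming from mixing are not uniform as the injectivity radius degenerates, so even the counting input is lost near the cusp. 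Resolving this is what required Kahn--Wright's umbrella construction --- nearly geodesic assemblies of good pants that cap off the portions of the surface ascending a cusp and return all boundary data to a region of bounded height, where matching can resume. Without that idea (or a switch to the combination-theorem or cut-and-paste arguments of the papers actually cited), your proof does not close. A minor further point: the worry about accidental parabolics at the end is misplaced, since a closed nearly geodesic immersed surface in any complete hyperbolic $3$-manifold is automatically quasi-isometrically embedded and hence quasi-Fuchsian; the cusps obstruct the construction, not that conclusion.
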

Let $\Gam_{\text{hyp}}$ denote the hyperbolic elements of $\Gam$, and let $\Sigma<\PSL(4,\R)$ be the
subgroup of $\Gam$ which is isomorphic to the hyperbolic surface subgroup given by 
Fact
\ref{fct:surfacesubgroup} and Benoist's remark following Theorem \ref{thm:ben3mfldgeom}. 
Since $\Sigma$ is a quasi-Fuchsian subgroup, 
no element of $\Sigma$
can preserve 
any properly embedded triangle. Then by Proposition \ref{prop:nscisoms}, 
$\Sigma$ is a subgroup in $\Gam_{\text{hyp}}$. 
\begin{corollary}
	There exist infinitely many noncommuting hyperbolic group elements in $\Gam$. 
	\label{cor:exist_hyp_elts}
\end{corollary}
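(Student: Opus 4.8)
The plan is to recognize that the geometric content is already in place, so that the corollary reduces to a purely group-theoretic observation. The subgroup $\Sigma < \Gam$ is isomorphic to a closed hyperbolic surface group, and every nontrivial element of $\Sigma$ is a hyperbolic element of $\Gam$ by the discussion immediately preceding the statement (no element of a quasi-Fuchsian subgroup stabilizes a properly embedded triangle, so Proposition \ref{prop:nscisoms} places $\Sigma$ inside $\Gam_{\text{hyp}}$). Thus it suffices to exhibit infinitely many pairwise noncommuting elements inside $\Sigma$.

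First I would record that a closed hyperbolic surface group has genus at least two and is therefore non-elementary: it is not virtually cyclic, hence non-abelian, and in fact contains a free subgroup $F = \langle a, b\rangle$ of rank two (seen directly, or via the Tits alternative applied to the linear group $\Gam$). Next I would produce the family explicitly inside $F$. In a free group the centralizer of a nontrivial element is infinite cyclic, and since the generator $b$ is primitive its centralizer is exactly $\langle b\rangle$. Set $g_n := a^n b a^{-n}$ for $n \geq 0$. If $g_m$ and $g_n$ commuted for some $m \neq n$, then $g_n$ would lie in the cyclic centralizer $\langle g_m\rangle$, forcing $a^{n-m} b a^{-(n-m)} = b^k$ in $F$ for some $k$; comparing reduced word lengths (the left side has length $2|n-m|+1 \geq 3$ and contains the letter $a$) shows this is impossible unless $n = m$. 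Hence the $g_n$ are pairwise noncommuting.

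Finally, since each $g_n$ is a nontrivial element of $\Sigma \subset \Gam_{\text{hyp}}$, the collection $\{g_n\}_{n \geq 0}$ consists of infinitely many pairwise noncommuting hyperbolic elements of $\Gam$, which establishes the corollary. I do not expect a genuine obstacle here: all of the dynamical and convexity input has been absorbed into the construction of $\Sigma$ and into Proposition \ref{prop:nscisoms}, and the only mild subtlety is the word-length bookkeeping in the free group confirming that distinct $g_m, g_n$ fail to commute.
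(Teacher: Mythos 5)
Your proof is correct and follows essentially the same route as the paper: the corollary is stated there as an immediate consequence of the preceding observation that the quasi-Fuchsian surface subgroup $\Sigma$ lies in $\Gam_{\text{hyp}}$, which is exactly your starting point. The free-group bookkeeping with the conjugates $a^n b a^{-n}$ is just an explicit filling-in of the elementary group theory the paper leaves implicit, and it is sound.
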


Let $G$ be any subset of $\Aut(\Om)$ 
and $\mathcal L(G):=\langle \tau(g) \rangle_{g\in G}$ the additive subgroup of $\R$ generated by
translation lengths of group elements in $G$ acting on $\Om$. Note that if $G$ is a subset of the 
group $\Gam$ which divides $\Om$ then $\mathcal L(G)$ is the additive subgroup of $\R$ generated by lengths
of closed curves in $\Om/\Gam$ associated to conjugacy classes in $\Gam$ of elements of $G$.

\begin{corollary}[{of \cite[Fact 5.5]{Ben1}}]
	If $\Gam$ is a Zariski dense subgroup of $\SL(n+1,\R)$ preserving a properly
	convex domain $\Om\subset\RP^n$, then $\mathcal L(\Gam)$ is dense in $\R$. 
	\label{cor:ben_spec_dense}
\end{corollary}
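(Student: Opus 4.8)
The plan is to recognize $\mathcal L(\Gam)$ as the image of the group generated by Jordan projections under a single fixed linear functional, and then to feed Zariski density into Benoist's density statement \cite[Fact 5.5]{Ben1}. First I would record the spectral formula for translation length. Since $\Gam$ is Zariski dense it is strongly irreducible, so by \cite[Proposition 1.1]{BenCC} every $g\in\Gam$ is positively proximal; applying this to $g^{-1}$ acting on the dual cone (also properly convex) shows that every $g$ is in fact biproximal with real positive top and bottom eigenvalues. Writing the eigenvalues of a representative of $g$ in $\SL(n+1,\R)$ as $\lambda_0(g)>|\lambda_1(g)|\ge\cdots\ge|\lambda_{n-1}(g)|>\lambda_n(g)>0$, the computation in the proof of Proposition \ref{prop:noquasihyp} gives, in the same way,
\[
	\tau(g)=\log\frac{\lambda_0(g)}{\lambda_n(g)}.
\]

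Next I would set up the bookkeeping. Let $\mathfrak a=\{\diag(t_0,\dots,t_n):\textstyle\sum_i t_i=0\}$ be the standard Cartan subspace, and let $\lambda\colon\Gam\to\mathfrak a$ be the Jordan projection $\lambda(g)=\diag(\log|\lambda_0(g)|,\dots,\log|\lambda_n(g)|)$ (these coordinates sum to $\log|\det g|=0$, so the image really lies in $\mathfrak a$). Let $\alpha_0\colon\mathfrak a\to\R$ be the linear functional $\alpha_0(\diag(t_0,\dots,t_n))=t_0-t_n$. Since the top and bottom eigenvalues are positive, $\alpha_0(\lambda(g))=\log\lambda_0(g)-\log\lambda_n(g)=\tau(g)$ for every $g$. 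As $\alpha_0$ is a homomorphism of the additive group $(\mathfrak a,+)$, it carries the subgroup generated by the $\lambda(g)$ onto the subgroup generated by the $\tau(g)$; that is, setting $D:=\langle\lambda(g)\rangle_{g\in\Gam}$, we have $\mathcal L(\Gam)=\alpha_0(D)$.

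Now I would invoke the main input: by \cite[Fact 5.5]{Ben1}, Zariski density of $\Gam$ in $\SL(n+1,\R)$ forces $D$ to be dense in $\mathfrak a$. It then remains only to observe that a continuous surjective linear functional carries a dense subgroup to a dense subgroup. Indeed $\alpha_0$ is nonzero (hence surjective onto $\R$, since $\diag(s,0,\dots,0,-s)\mapsto 2s$), and by continuity $\R=\alpha_0(\mathfrak a)=\alpha_0(\overline{D})\subseteq\overline{\alpha_0(D)}$, so that $\alpha_0(D)=\mathcal L(\Gam)$ is dense in $\R$, as claimed.

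I expect the work here to be one of hypothesis-matching rather than genuine difficulty. The one point requiring care is the passage from the abstract density of $D$ in $\mathfrak a$ to the one-dimensional statement, which rests on identifying $\tau$ with the \emph{single} functional $\alpha_0$ of the Jordan projection; this identification is exactly where biproximality — and hence Zariski density through strong irreducibility — is used. I would also stress that the only place the weaker hypothesis of the corollary enters (that $\Gam$ merely preserves $\Om$, rather than divides it) is in deriving the spectral formula for $\tau$; Benoist's Fact \ref{thm:benoistsdichotomy} input, namely \cite[Fact 5.5]{Ben1}, needs nothing beyond Zariski density, so no cocompactness is required.
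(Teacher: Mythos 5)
Your architecture is exactly the intended deduction: the paper offers no argument beyond the attribution, and the standard way to obtain the corollary from \cite[Fact 5.5]{Ben1} is precisely what you do --- write $\mathcal L(\Gam)$ as the image of the group $D$ generated by the Jordan projections under the functional $\alpha_0$, get density of $D$ in $\mathfrak a$ from Fact 5.5 (which indeed needs only Zariski density), and push forward along the continuous surjective homomorphism $\alpha_0$.

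The one step that fails as written is your justification of $\tau(g)=\alpha_0(\lambda(g))$ for \emph{every} $g\in\Gam$. You claim, citing \cite[Proposition 1.1]{BenCC}, that every element of $\Gam$ is positively proximal, hence biproximal. Benoist's statement there is a property of the \emph{group} --- the proximal elements of an irreducible group preserving a properly convex cone all have positive leading eigenvalue --- not an assertion that each element is proximal. And in the generality of this corollary, where $\Gam$ merely \emph{preserves} $\Om$ (no cocompactness), the stronger claim is false: Zariski-dense subgroups of $\SL(n+1,\R)$ preserving properly convex domains can contain unipotent parabolic elements (for instance the holonomy of a noncompact finite-volume convex projective surface), whose eigenvalues are all equal to $1$, as well as quasi-hyperbolic elements whose leading eigenvalue carries a Jordan block; none of these are proximal. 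For such $g$ your derivation gives nothing, so the identity $\mathcal L(\Gam)=\alpha_0(D)$ --- the crux of the hypothesis-matching --- is left unproven. The repair is standard and preserves your structure: for an \emph{arbitrary} automorphism $g$ of a properly convex domain one has $\tau(g)=\frac12\log\bigl(|\lambda_0(g)|/|\lambda_n(g)|\bigr)$, the ratio of the moduli of the extreme eigenvalues, with no proximality hypothesis (Cooper--Long--Tillmann; see also \cite{CrMar14}). Since you defined the Jordan projection with moduli, this gives $\tau=\alpha_0\circ\lambda$ on all of $\Gam$, and the rest of your argument then goes through verbatim. Two minor points: with the paper's normalization $d_\Om=\frac12\left|\log[\,\cdot\,]\right|$ the translation length is $\frac12\log(\lambda_0/\lambda_n)$ rather than $\log(\lambda_0/\lambda_n)$ (the proof of Proposition \ref{prop:noquasihyp} has the same slip), though a global factor of $\frac12$ is irrelevant to density; and your closing reference to Theorem \ref{thm:benoistsdichotomy} should point to \cite[Fact 5.5]{Ben1} itself, since that theorem is the dichotomy, not the density statement.
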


	If $\Om$ is not an ellipsoid, then the hypotheses of Corollary \ref{cor:ben_spec_dense} 
	hold whenever $\Gam$ is acting cocompactly on an {indecomposable} properly convex and
	{\em strictly convex} $\Om$ in
	projective space {\cite[Theorem 1.2]{Ben2}}. 
	In our case, $\Om$, 
	the universal cover of a Benoist 3-manifold is
	indecomposable but is not strictly convex, so Corollary \ref{cor:ben_spec_dense} does not 
	apply directly to $\Gam$ the fundamental group of a Benoist 3-manifold. 

	\begin{proposition}[restatement of {\cite[Proposition 6.5]{CrMar14}}]
	Suppose $\Gam$ is a strongly irreducible subgroup of $\SL(n+1,\R)$ which preserves a
	properly convex $\Om\subset \RP^n$. Let $G$ be the Zariski closure of $\Gam$. Then $G$ is a
	Zariski-connected real semi-simple Lie group. 
		\label{prop:restatementCrMar}
	\end{proposition}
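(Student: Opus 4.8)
The plan is to analyze the structure of the Zariski closure $G$ of a strongly irreducible subgroup $\Gam$ preserving a properly convex domain $\Om$. I would begin by recalling two independent facts. First, strong irreducibility passes to the Zariski closure: since every finite-index subgroup of $\Gam$ is irreducible, and finite-index subgroups of $\Gam$ are Zariski dense in the identity component $G^\circ$ (as the component group is finite), the identity component $G^\circ$ cannot stabilize any proper projective subspace, so $G$ is irreducible as a subgroup of $\SL(n+1,\R)$. Second, and this is the key geometric input, since $\Gam$ preserves the properly convex domain $\Om$, its Zariski closure $G$ also preserves $\cl{\Om}$: the condition $g\cl{\Om}=\cl{\Om}$ is a Zariski-closed condition (preservation of a convex body cuts out an algebraic subvariety of $\SL(n+1,\R)$ via the semigroup of matrices with nonnegative entries in a suitable basis, or more directly because $\{g : g\Om=\Om\}$ is already closed and a group), so $G<\Aut(\Om)$.

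With these in hand, the strategy is to run the classical argument that a group preserving a properly convex cone and acting irreducibly must be reductive, then upgrade reductive to semisimple using irreducibility to kill the center. Concretely, I would first establish \textbf{Zariski connectedness up to passing to} $G^\circ$, and reduce to showing $G^\circ$ is semisimple; the statement as phrased asserts $G$ itself is Zariski-connected, so I must separately argue the component group is trivial, which should follow because the full preimage structure of automorphisms preserving $\Om$ forces connectedness, or is cited from \cite{vey70}/\cite{Ben1}. The main technical step is \textbf{reductivity}: I would show the unipotent radical $R_u(G)$ is trivial. The standard approach uses that a properly convex cone preserved by $G$ gives, via duality, a preserved convex cone in the dual space; Vey's theorem \cite{vey70} states precisely that an irreducible (indeed strongly irreducible) subgroup preserving a properly convex cone has reductive Zariski closure. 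This is exactly the content cited earlier in the excerpt (``$\Om$ indecomposable, hence $\Gam$ irreducible \cite{vey70}''), so I would invoke Vey's reductivity theorem directly.

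From reductivity, $G^\circ = Z \cdot [G^\circ, G^\circ]$ where $Z$ is the connected center and $[G^\circ,G^\circ]$ is semisimple. To conclude $G$ is semisimple, I must show the center $Z$ is finite. Here irreducibility does the work: by Schur's lemma for the irreducible representation of $G$ on $\R^{n+1}$, the center $Z$ acts by scalars. But $G<\SL(n+1,\R)$, so the scalars are $(n+1)$-st roots of unity intersected with $\R$, hence $Z$ is finite — in fact trivial in the connected component. Therefore $G^\circ$ is semisimple, and combined with the connectedness discussion, $G$ is a Zariski-connected semisimple real Lie group.

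The \textbf{main obstacle} I anticipate is the reductivity step, i.e. ruling out a nontrivial unipotent radical; this is the nonformal part and is precisely where the properly convex structure is essential rather than the general representation theory. If $R_u(G)$ were nontrivial, it would fix a vector, and one must derive a contradiction with either irreducibility or with preservation of $\Om$ — the cleanest path is simply to cite Vey's theorem rather than reprove it, since the excerpt already relies on \cite{vey70}. A secondary subtlety is the precise meaning of ``Zariski-connected'' in the statement versus the possibly disconnected $G$; I would clarify that the relevant conclusion is about $G^\circ$ being semisimple and handle the component group by noting any automorphism preserving $\Om$ lies in the connected stabilizer, citing the structure established in the cited works.
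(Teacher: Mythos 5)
The first thing to say is that the paper does not prove this proposition at all: as its name indicates, it is a verbatim restatement of \cite[Proposition 6.5]{CrMar14}, imported as a black box, so your proposal is really an attempt to reprove the cited result. Judged on its own terms it has two genuine gaps, both located exactly where proper convexity has to do the work. The first is your claim that the Zariski closure $G$ preserves $\cl{\Om}$. Your justification --- that $\{g \mid g\Om=\Om\}$ ``is already closed and a group'' --- is not valid: closed subgroups of Lie groups need not be Zariski closed. For instance, the closed one-parameter group $\{\diag(e^t,e^{\alpha t},e^{-(1+\alpha)t}) \mid t\in\R\}$ with $\alpha$ irrational has Zariski closure the full diagonal torus. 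That the Zariski closure of an irreducible group preserving a properly convex open set still preserves a properly convex cone is a genuine theorem (it is essentially Benoist's positive-proximality criterion, \cite[Proposition 1.1]{BenCC}: an irreducible group preserves a properly convex cone if and only if it is positively proximal, a condition which passes to the Zariski closure); it cannot be obtained by point-set topology, and it is the real content of the proposition.

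The second gap is the Schur's lemma step. Over $\R$ the commutant of an irreducible representation is a division algebra $\R$, $\C$, or $\H$, not necessarily $\R$, so the connected center of $G$ need not act by real scalars: it could a priori be a circle acting by complex scalars $e^{i\theta}$, and every such element already has real determinant $|e^{i\theta}|^{n+1}=1$, so the constraint $G<\SL(n+1,\R)$ excludes nothing. This is not a pedantic worry: a finitely generated dense subgroup of $\mathrm{U}(2)<\SL(4,\R)$ is strongly irreducible, and its Zariski closure is $\mathrm{U}(2)$, whose center is a circle --- so no argument using only strong irreducibility and $\det=1$, which is all your proof validly uses once the first step fails, can conclude semisimplicity. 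Convexity must enter precisely here: if a circle of complex scalars preserved a properly convex cone $C$, the orbit of $v\in C$ would be a connected circle in $C\cup -C$, hence contained in $C$, yet it contains $-v=e^{i\pi}v$, contradicting proper convexity of $C$. But to run this argument you need the first gap repaired, since it is a cone preserved by $G$ (not merely by $\Gam$) that is required. Two smaller points: reductivity, which you flag as the main obstacle, is actually the easy step and needs neither Vey nor convexity (the unipotent radical is normal, so its nonzero fixed subspace is $G$-invariant, hence all of $\R^{n+1}$); and Zariski-connectedness of $G$, which the statement asserts, is left unaddressed in your sketch beyond a vague deferral, and it too requires an argument.
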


	If $\log(G)=\{ \log|\lambda_1(g)|, \log|\lambda_2(g)|,\ldots,\log|\lambda_{n+1}(g)| \mid
	g\in G\}$ with $\lambda_i$ decreasing in magnitude, 
	Then Proposition \ref{prop:restatementCrMar} implies $\log(G)$ is a subspace of
	$\R^{n+1}$, and $\log(\Gamma)$ is dense in $\log(G)$. It is straightforward to verify that
	$\mathcal L(\Gam)\neq \R$ implies $\overline{\log(\Gam)}$ cannot be a subspace of
	$\R^{n+1}$.
	Thus, it suffices to prove that $\Sigma$ is strongly irreducible to conclude that $\mathcal
	L (\Sigma)$ is dense in $\R$. 

	For the following lemma, we will use that $\Gam$ preserves
	(divides) $\Om$ if and
	only if the transpose $\Gam^t$ preserves (divides) the projective dual $\Omega^\ast$
	\cite[Lemma 2.8]{Ben1}.

\begin{lemma}
	The closed hyperbolic surface subgroup $\Sigma$ is either strongly irreducible or 
	$\mathcal L(\Sigma)$ is dense in $\R$. 
	\label{lem:baker_cooper_irred}
\end{lemma}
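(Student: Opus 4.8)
The plan is to prove the contrapositive form of the dichotomy: assuming $\Sigma$ is \emph{not} strongly irreducible, I will show that $\mathcal L(\Sigma)$ is dense in $\R$. By definition there is a finite-index subgroup $\Sigma_0 < \Sigma$ which is reducible, i.e. $\Sigma_0$ preserves a proper nonzero subspace $W \subset \R^4$. Note that $\Sigma_0$ is again a finite-index, hence non-elementary, closed hyperbolic surface subgroup lying in $\Gam_{\text{hyp}}$, so all of its nontrivial elements are hyperbolic and biproximal; moreover $\mathcal L(\Sigma_0) \subseteq \mathcal L(\Sigma)$, so it suffices to prove that $\mathcal L(\Sigma_0)$ is dense. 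I will also use throughout that transposition preserves eigenvalues and hence translation lengths, so that $\mathcal L(\Sigma_0^t) = \mathcal L(\Sigma_0)$ and the entire argument is symmetric under the duality $\Om \leftrightarrow \Om^\ast$, $\Sigma_0 \leftrightarrow \Sigma_0^t$. The overall strategy is to extract from reducibility a $\Sigma_0$-invariant projective plane meeting $\Om$, restrict to the resulting convex projective surface, and transport the density furnished by Proposition \ref{prop:restatementCrMar} back up to the ambient translation-length spectrum.

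First I would reduce to a good invariant subspace. Taking a minimal invariant $W$ and using the duality above, I will argue that the only surviving configuration is that $\Sigma_0$ preserves a projective plane $P = \mathbb{P}(W)$ with $\Om_P := \Om \cap P \neq \varnothing$ (possibly after passing to $\Sigma_0^t$ and $\Om^\ast$). An invariant point cannot lie in $\Om$ (it would force $\tau = 0$), cannot lie in $\bd\Om$ (in the discrete, parabolic-free group $\Sigma_0$ two independent hyperbolic elements cannot share a fixed point), and if it lies outside $\cl\Om$ it dualizes to an invariant plane meeting $\Om^\ast$. An invariant line meeting $\Om$ would be a common axis, forcing all elements of $\Sigma_0$ to share their two fixed points and hence $\Sigma_0$ to be elementary; the remaining line configurations dualize to the previous cases, with the degenerate case of a line carrying a nontrivial boundary segment excluded by Theorem \ref{thm:ben3mfldgeom}(b),(f) together with the fact that $\Sigma$ preserves no properly embedded triangle. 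Finally, an invariant plane disjoint from $\Om$ dualizes to an invariant point, already handled.

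Next, on the invariant plane, $\Sigma_0$ preserves the two-dimensional properly convex domain $\Om_P$ and is non-elementary, so by the same point/line exclusions (applied inside $P \cong \RP^2$, and to finite-index subgroups) it acts strongly irreducibly on $W$. By Proposition \ref{prop:restatementCrMar} its Zariski closure in $\SL(3,\R)$ is semisimple, and the subspace argument following that proposition forces the group generated by the $\Om_P$-translation lengths $\log(\mu_0(g)/\mu_2(g))$ — where $\mu_0 \geq \mu_1 \geq \mu_2$ are the eigenvalues of $g|_W$ — to be dense in $\R$. The key bridge is that this two-dimensional length spectrum coincides with the ambient one: since $g$ preserves $\Om_P \subset P$ and $P$ is closed, the orbit $g^n x$ of any $x \in \Om_P$ stays in $P$, so the attracting and repelling fixed points $g^\pm \in \cl\Om$ — unique by Proposition \ref{prop:nscisoms} — lie in $P$. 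Hence the extreme eigenvalues $\lambda_0(g)$ and $\lambda_3(g)$ are eigenvalues of $g|_W$, giving $\mu_0(g) = \lambda_0(g)$ and $\mu_2(g) = \lambda_3(g)$, so that $\tau(g) = \log(\lambda_0/\lambda_3) = \log(\mu_0/\mu_2)$ equals the $\Om_P$-translation length. Therefore $\mathcal L(\Sigma_0)$ contains the dense group generated by the $\Om_P$-translation lengths and is dense, completing the argument.

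I expect the main obstacle to be the reduction step: carefully excluding every invariant point and line configuration, and in particular verifying that the convergence-type dynamics of biproximal elements on the non-strictly-convex $\cl\Om$ genuinely forbid two independent hyperbolic elements of the discrete, parabolic-free group $\Sigma_0$ from sharing a fixed point, while cleanly dispatching the boundary-segment cases via Theorem \ref{thm:ben3mfldgeom}(b),(f). By contrast, once a good invariant plane is in hand, the containment $g^\pm \in P$ and the resulting equality of translation lengths is a short limiting argument, and the density then follows immediately from the machinery already assembled around Proposition \ref{prop:restatementCrMar}.
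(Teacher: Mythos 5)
Your proposal is correct, and it shares the paper's overall skeleton: a case analysis over an invariant point, line, or plane, with projective duality used to fold the cases into one another, and the same exclusions (no elliptics in a discrete torsion-free group, no shared proper extremal fixed points by proper discontinuity, no stabilized triangles via Theorem \ref{thm:ben3mfldgeom}(b),(f) and quasi-Fuchsianness of $\Sigma$). Where you genuinely diverge is the decisive case of an invariant plane $P$ meeting the domain. The paper observes that the surface group acts \emph{cocompactly} on the totally geodesic section $P\cap\Om$ (respectively $\Pi\cap\Om^\ast$) and invokes Benoist's dichotomy for divisible convex sets \cite[Theorem 1.2]{Ben2}: the restricted group is either Zariski dense, so Corollary \ref{cor:ben_spec_dense} applies, or the section is homogeneous and density of the two-dimensional length spectrum holds anyway; isospectrality of dual groups then finishes. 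You instead establish strong irreducibility of the restricted action by rerunning the point/line exclusions inside $P$, and feed this into Proposition \ref{prop:restatementCrMar} and the subspace/Jordan-projection argument already recorded in the paper. Your route buys two things: it avoids the cocompactness of the surface group on $P\cap\Om$, which the paper asserts without proof (it is true, since a closed surface group acting freely and properly discontinuously on the disc $P\cap\Om$ must have closed quotient), and it stays within machinery the paper has already stated rather than importing \cite[Theorem 1.2]{Ben2}; the cost is the extra layer of exclusions inside $P$. Your explicit bridge --- $g^{\pm}\in P$ forces $\lambda_0(g)$ and $\lambda_3(g)$ to be eigenvalues of $g|_W$, so the $\Om_P$-translation length equals $\tau(g)$ --- is a welcome addition, since both proofs need this identification (the paper's ``isospectral'' remark presupposes it) but only you spell it out. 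Two small cautions: normalize $g|_W$ into $\SL(3,\R)$ (or work projectively) before citing Proposition \ref{prop:restatementCrMar}, and keep in mind that the ``subspace argument'' is powered by Zariski density in the semisimple closure together with Benoist's Fact 5.5, not by semisimplicity alone --- the same implicit dependence the paper itself relies on.
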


\begin{proof}
	First, since $\Sigma$ is a surface group, every finite-index subgroup is also a surface subgroup. 
	It suffices to show any surface group in $\PSL(4,\R)$ preserving a domain $\Om\subset \RP^3$ is irreducible. 
	By contradiction, suppose $\Sigma$ fixes a point $p\in\RP^3$. 
	Clearly $p\not\in\Om$ because $\Gam$ acting discretely without torsion cannot have elliptic elements. 
	Also, $p\not\in\bd\Om$ because elements of $\Sigma$ 
	do not stabilize any triangles so all fixed points of elements of $\Sigma$ are proper and
	extremal, and noncommuting hyperbolic isometries cannot fix the same proper extremal point
	since $\Gam$ 
	acts properly discontinuously on $\Om$.
	If $p\not\in\cl{\Om}$, then we consider the dual case: when $\Sig^t$ preserves a projective
	plane $\Pi$ which intersects $\Om^\ast$. 
	Then $\Sigma^t$ is acting cocompactly on a totally geodesic hypersurface $\Pi\cap \Om^\ast$. 
	By {\cite[Theorem 1.2]{Ben2}},  
	$\Sigma^t$ is either Zariski dense and hence $\mathcal L(\Sigma^t)$ is dense 
	by Corollary \ref{cor:ben_spec_dense} or $\Pi\cap \Om^\ast$ is
	homogeneous and $\mathcal L(\Sigma^t)$ is dense in $\R$ anyways. 
	Then so is $\mathcal L(\Sigma)$ 
	since dual groups preserving dual properly convex sets are isospectral. 
	Thus, if $\Sigma$ preserves $\Om$ and fixes a point, then $\mathcal L(\Sigma)$ is dense in
	$\R$. 

	Now suppose $\Sigma$ preserves a line $l$. 
	The case where $l\subset \Om$ or $l$ is disjoint from $\cl{\Om}$ by duality is impossible because $\Aut(l)=\R$. 
	If $l$ intersects $\bd\Om$ then either $\Sigma \not< \Aut(\Om)$ or $\Sigma \subset
	\Stab(\tri)$, both a contradiction. 

	If $\Sigma$ stabilizes a plane, then we revisit the dual cases where $\Sigma^t$ stabilizes a
	point, unless the plane intersects $\Om$. In this case, we have already seen that $\mathcal
	L(\Sigma)$ is dense in $\R$.  
\end{proof}

	\begin{proposition}
		\label{prop:densespec}
		Let $\Om/\Gam=M$ be a Benoist 3-manifold.
		Then $\mathcal L(\Gam_{\text{hyp}})$ is dense in $\R$. 
	\end{proposition}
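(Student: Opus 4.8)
The plan is to reduce the statement about $\Gam_{\text{hyp}}$ to the immersed hyperbolic surface subgroup $\Sig$ and then assemble the structural results already in place. Since $\Sig < \Gam_{\text{hyp}}$ (established just before Corollary \ref{cor:exist_hyp_elts}, using that no element of the quasi-Fuchsian $\Sig$ stabilizes a properly embedded triangle together with Proposition \ref{prop:nscisoms}), we have $\mathcal L(\Sig)\subseteq\mathcal L(\Gam_{\text{hyp}})$. As density in $\R$ passes to any larger subgroup, it suffices to prove that $\mathcal L(\Sig)$ is dense in $\R$.

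First I would invoke Lemma \ref{lem:baker_cooper_irred}, which gives a dichotomy: either $\mathcal L(\Sig)$ is already dense in $\R$, in which case there is nothing left to prove, or $\Sig$ is strongly irreducible. So the work is confined to the strongly irreducible branch.

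In that branch I would apply Proposition \ref{prop:restatementCrMar} to the Zariski closure $G$ of $\Sig$: it is a Zariski-connected real semisimple Lie group. As recorded in the remarks following that proposition, semisimplicity forces $\log(G)$ to be a linear subspace of $\R^{4}$ in which $\log(\Sig)$ is dense. The translation length functional now enters. Since every element of $\Sig$ is biproximal, the formula established in the proof of Proposition \ref{prop:noquasihyp} expresses $\tau(g)$ as the difference between the logarithms of the largest and smallest eigenvalue moduli of $g$; that is, $\tau = L\circ\log$ for the linear functional $L(x)=x_1-x_4$ on $\R^4$ (coordinates ordered by decreasing magnitude). Because $\Sig$ contains hyperbolic elements by Corollary \ref{cor:exist_hyp_elts}, $L$ does not vanish identically on the subspace $\log(G)$, so $L(\log(G))=\R$. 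Combined with density of $\log(\Sig)$ in $\log(G)$ and continuity of $L$, the set $\{\tau(g)\mid g\in\Sig\}=L(\log(\Sig))$ is dense in $\R$, whence so is the subgroup $\mathcal L(\Sig)$ it generates.

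The content is genuinely carried by the earlier results, so I expect the main obstacle to be conceptual bookkeeping rather than a new computation: one must keep track of the reduction to $\Sig$ and verify that \emph{density in a linear subspace} correctly transfers to density of $\mathcal L(\Sig)$ through the translation-length functional $L$. The one spot requiring a short verification is the contrapositive flagged before Lemma \ref{lem:baker_cooper_irred}: if $\mathcal L(\Sig)$ were non-dense it would be a nontrivial discrete subgroup $c\Z$ with $c>0$, forcing $\overline{\log(\Sig)}$ into the union of parallel affine hyperplanes $\{x\mid x_1-x_4\in c\Z\}$, which cannot be a linear subspace — contradicting the subspace structure of $\log(G)$. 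Either branch of the dichotomy therefore yields density of $\mathcal L(\Sig)$, and hence of $\mathcal L(\Gam_{\text{hyp}})$.
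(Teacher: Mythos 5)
Your proof is correct and takes essentially the same approach as the paper: the dichotomy of Lemma \ref{lem:baker_cooper_irred}, the semisimple Zariski closure from Proposition \ref{prop:restatementCrMar}, and Benoist-type density of the length data, with the reduction from $\Gam_{\text{hyp}}$ to $\Sigma$ handled exactly as in the text. The only difference is cosmetic: where the paper's proof closes the strongly irreducible branch by citing Corollary \ref{cor:ben_spec_dense}, you instead write out the linear-functional argument ($\tau = L\circ\log$ with $L(x)=x_1-x_4$ nonvanishing on the subspace $\overline{\log(\Sigma)}$), which is precisely the verification the paper itself sketches in the paragraph preceding Lemma \ref{lem:baker_cooper_irred}.
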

	\begin{proof}
		By Lemma \ref{lem:baker_cooper_irred} and Proposition \ref{prop:restatementCrMar}, 
		the group $\Sigma\subset\Gam_{\text{hyp}}$ is either Zariski dense or $\mathcal
		L(\Sigma)$ is dense in $\R$. 
		By Corollary \ref{cor:ben_spec_dense}, density of $\mathcal L(\Sigma)$ holds in both cases. 
	\end{proof}

\section{Recurrence behavior}
\label{sec:recurrence}

Recall that a point $\xi\in\bd\Om$ is {proper} if there exists a unique supporting hyperplane to $\Om$ at $\xi$ 
and {extremal} if $\xi$ is contained in no open line segment embedded in $\bd\Om$. 
For the Benoist 3-manifolds, vertices of properly embedded triangles are the only nonproper points, and 
all nonextremal points are contained in the side of some properly embedded triangle.
Thus, the proper extremal points are the complement of the boundaries of properly embedded triangles. 

We will say $v\in S\Om$ is \emph{forward regular} if $v^+$ is a proper extremal point, and similarly for \emph{backward regular}. If $v^+$ is a nonproper or nonextremal point, then $v$ is \emph{forward singular}.  
If $v$ is both forward and backward regular, then we will say $v$ is \emph{regular} (and similarly
for singular vectors). 	
Let $S\Om_{\text{reg}}$ be the collection of regular vectors and the complement, $S\Om_{\text{sing}}$, the set of $v\in\Om$ such that $v^+$ or $v^-$ is in the boundary of some properly embedded triangle. 
The collection of vectors tangent to projective lines contained in properly embedded triangles is
denoted $S\Om_{\text{flat}}$. 
These notions descend to the quotient since $\Gam$ is acting by projective
transformations, and we assign the analogous definitions to $SM_{\text{reg}}, SM_{\text{sing}},$ and
$SM_{\text{flat}}$. 
Lastly, a closed orbit $\phi\cdot v$ is \emph{hyperbolic} if when $v$ is lifted to $\tilde{v}$ in the universal
cover, $\ell_{\tilde{v}}$ is preserved by a hyperbolic group element. Note that vector with a closed
orbit which is hyperbolic must be regular (Proposition \ref{prop:nscisoms}). 
We will also denote by $d$ a Finsler metric on $SM$ compatible with the topology, see \cite[pp 161-206]{handbook}.

\subsection{Stable and unstable sets}


	Recall that the stable and unstable sets at a point are defined to be 
		\begin{align*}
			& W^{ss}(v) = \{u\in SM\mid d(\phi^tv,\phi^tu)\to 0\mid t\to+\infty\}, \\
			& W^{su}(v)= \{ u\in SM\mid d(\phi^{-t}v,\phi^{-t}u)\to 0\mid t\to+\infty\}.
		\end{align*}
		The weak stable and unstable sets of $v$ (denoted $W^{os}(v)$ and $W^{ou}(v)$,
		respectively) are the points which stay bounded distance from
	$v$ under the geodesic flow in positive and negative time, respectively. 	
	The strong stable and unstable sets are \emph{global} if for all regular $u\neq v$, at least
	one of the following are nonempty:
	$W^{ss}(v)\cap W^{ou}(u)$ or $W^{ss}(v)\cap W^{ou}(-u)$. 

\begin{proposition}
	\label{prop:globalstables}
	If $v\in SM_{\text{reg}}$ then $W^{ss}(v)$ and $W^{su}(v)$ are defined globally, and
	$W^{os}(v)$, $W^{ou}(v)$ admit a flow invariant foliation by strong stable (respectively,
	strong unstable) leaves.
\end{proposition}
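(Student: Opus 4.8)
The plan is to work in the universal cover $\Om\subset\RP^3$, where $v$ lifts to a vector $\tilde v$ whose endpoints $v^+,v^-\in\bd\Om$ are both proper extremal, since $v$ is regular. The geometric engine behind both assertions is the following observation: \emph{any two distinct proper extremal points of $\bd\Om$ are joined by an open projective segment lying entirely in $\Om$.} Indeed, the chord between two boundary points of a convex set lies either in $\Om$ or in $\bd\Om$; if it lay in $\bd\Om$ it would be a nontrivial segment of $\bd\Om$, hence contained in $\bd\tri$ for some properly embedded triangle by Theorem \ref{thm:ben3mfldgeom}(f), contradicting that proper extremal points are exactly the complement of $\bigcup_\tri\bd\tri$. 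Thus every pair of distinct regular endpoints spans a genuine regular geodesic of $\Om$.

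For the global assertion, fix a regular $u\ne v$. Since $u$ is regular, $u^-\ne u^+$, so $v^+$ differs from at least one of $u^\pm$; say $v^+\ne u^-$ (otherwise replace $u$ by $-u$, which swaps its endpoints). By the observation, the open segment $(u^-,v^+)$ is a regular geodesic of $\Om$; let $w$ be its unit tangent vector based at the point where it crosses the stable horosphere of $\tilde v$ (defined below). Then $w$ has backward endpoint $u^-$, so it stays bounded distance from $\tilde u$ in backward time and descends into $W^{ou}(u)$, while the shared forward endpoint $v^+$ places $w\in W^{ss}(v)$ once forward convergence is verified. This produces a point of $W^{ss}(v)\cap W^{ou}(u)$, or of $W^{ss}(v)\cap W^{ou}(-u)$ in the exceptional case, which is exactly the global property.

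For the foliation, I first characterize the weak stable set: $u'\in W^{os}(v)$ if and only if the forward ray of $u'$ is asymptotic to $v^+$, since rays landing at distinct boundary points diverge in the Hilbert metric while those landing at a common proper point stay bounded. On the vectors pointing at $\xi:=v^+$ I then use the Busemann function $b_\xi$, which is well defined and continuous because $\xi$ is proper (a unique supporting hyperplane); its level sets are the horospheres centered at $\xi$. The strong stable set $W^{ss}(v)$ is precisely the horosphere through the foot of $\tilde v$, and as the foot ranges over $\Om$ these horospheres partition the vectors pointing at $\xi$, i.e. they foliate $W^{os}(v)$. Flow invariance is immediate: $\phi^t$ fixes $\xi$ and shifts $b_\xi$ by $t$, hence carries the leaf $W^{ss}(\tilde v)$ to $W^{ss}(\phi^t\tilde v)$, another leaf of the same family. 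Descending to $SM$ and applying the time-reversed argument at $v^-$ (equivalently, to $-v$) gives the unstable statement.

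The main obstacle is the analytic point hidden in ``forward convergence'': showing that two rays with a common proper \emph{extremal} endpoint $\xi$ satisfy $d(\phi^t v,\phi^t u)\to 0$, and not merely boundedness, even though $\bd\Om$ is neither strictly convex nor $C^1$ globally. I expect to extract this from the local regularity of $\bd\Om$ at $\xi$, controlling the cross-ratio that defines the Hilbert distance along the two rays via Busemann comparison; this is precisely where extremality (no segment through $\xi$) and properness (a single supporting hyperplane) are both indispensable, and where the horosphere machinery for Hilbert geometries must be invoked with care in the nonstrictly convex setting.
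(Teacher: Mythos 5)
Your proposal follows the same route as the paper's proof: lift to the universal cover, use horospheres based at the proper extremal endpoints to identify $W^{ss}(v)$ and $W^{su}(v)$ geometrically, foliate the weak sets $W^{os}(v)$, $W^{ou}(v)$ by these leaves (flow invariance being automatic since the flow fixes the endpoint and shifts the Busemann function), and obtain globality from the chord joining $u^-$ to $v^+$, passing to $-u$ when $u^-=v^+$. Your preliminary observation that a chord between two distinct proper extremal points must lie in $\Om$ (else it would lie in $\bd\tri$ for some properly embedded triangle by Theorem \ref{thm:ben3mfldgeom}(f), contradicting extremality) is correct, and it is a detail the paper's proof leaves implicit when it asserts that $W^{ss}(v)\cap W^{ou}(u)\neq\emptyset$ whenever $u^-\neq v^+$.

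The deficiency is the step you yourself flag and postpone: that two geodesics with the same proper extremal forward endpoint, normalized to lie on a common horosphere, satisfy $d(\phi^t\tilde u,\phi^t\tilde v)\to 0$ and not merely bounded distance. This is not a side issue; it is the entire content of the identification of $W^{ss}(v)$ with $\{u \mid u^+=v^+,\ \pi u\in\mathcal H_{v^+}(\pi v)\}$, and without it neither the foliation claim nor the globality claim is established. (Relatedly, your auxiliary claim that rays landing at distinct boundary points always diverge is false in general in the nonstrictly convex setting --- two lines inside a properly embedded triangle can land at distinct points of $\bd\tri$ while remaining at bounded distance --- so that claim, too, must be localized at the extremal point $v^+$.) The paper disposes of exactly this step by citation: Lemma 3.4 of \cite{Ben1} gives the horosphere description of strong stable sets in the strictly convex $C^1$ case, and the argument is local at the forward endpoint, so it applies verbatim at a point that is proper (unique supporting hyperplane, i.e. $C^1$ regularity at $\xi$) and extremal (no segment through $\xi$). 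So your method is the paper's method, and your diagnosis of where properness and extremality enter is accurate; what is missing from your write-up is the cross-ratio estimate itself, which in the paper is supplied by that one-line citation rather than by a new argument.
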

	\begin{proof}
		For proper extremal points, horospheres are well-defined and the geometric
		description of stable and unstable sets applies as for the strictly convex case
		(as in \cite[Lemma 3.4]{Ben1}): that is, for 
		for $v\in S\Om_{\text{reg}}$ we have 
		\begin{align*}
			& W^{ss}(v) = \{u\in S\Om \mid \pi u\in \mathcal H_{v^+}(\pi v), \ u^+=v^+\}, \\
			& W^{su}(v)= \{ u\in S\Om\mid \pi u\in \mathcal H_{v^-}(\pi v), \ u^-=v^- \},
		\end{align*}
		where $\mathcal H_{\xi}(p)$ is the horosphere based at $\xi\in\bd\Om$ passing
		through $p\in \Om$,  
		and the strong stable and unstable sets foliate the weak stable and unstable sets:
		\begin{align*}
			W^{ou}(v) &= \bigcup_{t\in\R} W^{su}(\phi^tv) \\
				& = \{ w\in S\Om \mid w^-=v^-\},
		\end{align*}
		such that the foliation is both $\Gam$-invariant and $\tilde{\phi}^t$-invariant.
		It is then clear that for any two $u\neq v\in S\Om_{\text{reg}}$, $W^{ss}(v)\cap
		W^{ou}(u)\neq\emptyset$ as long as $u^-\neq v^+$. 
	\end{proof}
	Conversely, nonproper and nonextremal points do not have well-defined stable and unstable
	sets which foliate the weak stable and unstable sets. This can be verified by basic
	properties of the cross-ratio. 
	By Theorem \ref{thm:ben3mfldgeom}(g), the vertices of properly embedded triangles in $\Om$
	are dense in $\bd\Om$, and as such the singular points $x\in S\Om$ in $\bd\Om$ are dense in
	$S\Om$. Since these points do not admit stable and unstable sets, the geodesic flow cannot
	have local product structure.

However, the Bowen bracket for regular vectors is well-defined by 
Proposition \ref{prop:globalstables}. The \emph{Bowen-bracket} of $u$ with $v$ is the point of
intersection $w$ of the strong stable and
weak stable foliations of $u$ and $v$ such that $v^-\neq u^+$. Geometrically, $w$ is 
uniquely determined by $w^-=v^-$,
$w^+=u^+$, and $\pi w\in \mathcal H_{u^+}(\pi u)$. 

Another consequence of the geometric definition of stable and unstable sets is that the distance
between $v,u\in 
W^{os}(w)$ is monotone decreasing under the geodesic flow in positive time for an adapated metric
\cite{Cr14},
which can be verified by properties of the cross-ratio. Similarly, the distance between points in
the same unstable set is monotone decreasing under the flow in negative time. 

\subsection{Topological transitivity}

	In this subsection we prove topological transitivity, which is equivalent to existence  
	of a dense orbit when the phase space is compact
	\cite[Lemma 1.4.2]{MDS}, as in the case of the Benoist 3-manifolds. 
	 A continuous dynamical system $f^t\colon X\to X$ is \emph{topologically transitive} if for
	 every pair of open sets $U,V\subset X$, there exists a time $0<T\in \R$ such that
	 $f^T(U)\cap V\neq\emptyset$. 
	 If $X$ is a metric space then the system is \emph{uniformly transitive} if for all
	 $\delta>0$, there exists a $T>0$ such that for all $x,y\in X$, there is some $t\leq T$ such
	 that $f^t\big(B(x,\delta)\big) \cap B(y,\delta) \neq\emptyset$. 
	 \ifextended \else
  	 It is clear that transitivity implies {uniform transitivity} when $X$ is a compact metric space. 
  	 \fi

	\begin{lem} \label{lem:densepps}
	 Hyperbolic periodic points are dense for the geodesic flow of a Benoist 3-manifold. 
	\end{lem}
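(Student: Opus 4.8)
The plan is to identify hyperbolic periodic points with axes of hyperbolic group elements, and then to produce such axes with prescribed endpoints. By Proposition \ref{prop:nscisoms} a closed orbit is hyperbolic exactly when a lift lies along the $g$-invariant projective line joining the proper extremal fixed points $g^-,g^+$ of a hyperbolic $g\in\Gam$, and the tangent vectors along this line project to a periodic orbit of length $\tau(g)$ in $SM$. Since a vector of $S\Om$ is determined by its two endpoints together with its footpoint, it suffices to find a hyperbolic $g$ whose fixed points $g^\pm$ approximate $\tilde v^\pm$; the tangent vector to the axis of $g$ based near $\pi\tilde v$ then projects to a hyperbolic periodic point near the projection of $\tilde v$. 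The singular vectors (those with an endpoint in some $\bd\tri$) form a countable union of nowhere dense sets, so the regular vectors are dense in $S\Om$; by the triangle inequality I therefore only need to carry this out for a regular $\tilde v$, whose endpoints $\xi:=\tilde v^+$ and $\eta:=\tilde v^-$ are distinct proper extremal points bounding the line $\ell_{\tilde v}\subset\Om$.

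First I would show that the attracting fixed points of hyperbolic elements are dense in $\bd\Om$. Since $\Gam$ divides $\Om$ it is irreducible and, by Corollary \ref{cor:exist_hyp_elts}, contains (bi)proximal hyperbolic elements, so the action of $\Gam$ on $\bd\Om$ is minimal and the limit set is all of $\bd\Om$. Consequently the $\Gam$-orbit of the proper extremal attracting fixed point $a^+$ of a single hyperbolic $a$ is dense, and each $\gam a^+$ is the attracting fixed point of the hyperbolic conjugate $\gam a\gam^{-1}$. Hence the set $H^+$ of attracting fixed points of hyperbolic elements, and likewise the set $H^-$ of repelling fixed points, is dense in $\bd\Om$.

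Next I would gain \emph{independent} control of both endpoints by a ping-pong construction. Using density of $H^+$ and $H^-$, choose hyperbolic $g,h\in\Gam$ in general position (four distinct fixed points) with $g^+$ close to $\xi$ and $h^+$ close to $\eta$. By the North--South dynamics of biproximal elements on $\cl{\Om}$, for $N$ large the element $f_N:=g^Nh^{-N}$ is biproximal with attracting fixed point $f_N^+\to g^+$ and repelling fixed point $f_N^-\to h^+$; hence $(f_N^-,f_N^+)$ lies arbitrarily close to $(\eta,\xi)$. Granting that each $f_N$ is hyperbolic, its axis is an oriented line in $\Om$ converging to $\ell_{\tilde v}$, and continuity of the parametrization of a vector by its endpoints and footpoint completes the approximation.

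The main obstacle is the last point: ruling out that the constructed biproximal elements $f_N$ are \emph{flat} rather than hyperbolic. By Proposition \ref{prop:nscisoms} a flat $f_N$ stabilizes a properly embedded triangle $\tri_N$ with $f_N^\pm$ among its vertices, so $\tri_N$ would possess an edge --- a genuine segment of $\bd\Om$ --- joining a fixed small neighborhood of $\xi$ to a fixed small neighborhood of $\eta$. I would exclude this for large $N$ using the fine structure of Theorem \ref{thm:ben3mfldgeom}: by disjointness of triangle closures (part (b)), finiteness of the $\Gam$-orbits of triangles (part (d)), and the fact that every boundary segment lies in some $\bd\tri$ (part (f)), only finitely many triangles can carry such an edge spanning two fixed disjoint neighborhoods. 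Indeed, were infinitely many distinct $f_N$ flat, the corresponding edges --- being $\Gam$-translates of finitely many model edges under group elements escaping to infinity --- would either collapse under the proximal dynamics, and so fail to span the two neighborhoods, or subconverge to a nontrivial segment of $\bd\Om$ abutting $\xi$, contradicting that $\xi$ is extremal. Making this degeneration argument precise is the crux; once it is in hand, $f_N$ is hyperbolic for large $N$ and density of hyperbolic periodic points follows.
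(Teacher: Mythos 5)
Your proposal follows essentially the same route as the paper's proof: the paper fixes two noncommuting hyperbolic elements $g,h$ (Corollary \ref{cor:exist_hyp_elts}), forms the products $k_n=g^nh^n$ (you use $g^Nh^{-N}$), uses minimality of the $\Gam$-action on $\bd\Om$ to approximate an arbitrary pair of endpoints by fixed-point pairs, and then must rule out that all but finitely many of these elements are flat. The difference is that the paper completes this last step, whereas you explicitly leave it open (``making this degeneration argument precise is the crux''), so the proposal as written has a genuine gap at exactly the decisive point.

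Moreover, the sketch you give for closing it would not work as stated, because it is anchored at the wrong points. If infinitely many $f_N$ were flat, the relevant edge of $\tri_N$ is a segment of $\bd\Om$ whose endpoints are the vertices $f_N^+$ and $f_N^-$, and these converge to $g^+$ and $h^+$ --- not to $\xi$ and $\eta$. A subsequential limit of these edges is therefore a segment of $\bd\Om$ containing $g^+$ and $h^+$; it need not pass through $\xi$ at all, so ``contradicting that $\xi$ is extremal'' does not follow. Worse, even at $g^+$ extremality alone is not enough: $g^+$ could a priori be an \emph{endpoint} of the limit segment, which extremality does not forbid. What closes the argument (and is what the paper does) is to track the fixed points themselves: the accumulation of the boundaries $\bd\tri_N$ contains both $g^+$ and $h^+$, and it must be the boundary of a properly embedded triangle, a line segment, or a point. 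A point is impossible since $g^+\neq h^+$; a triangle boundary or a nontrivial segment is impossible because, by Theorem \ref{thm:ben3mfldgeom}(f), every segment of $\bd\Om$ lies on some $\bd\tri$, while fixed points of hyperbolic elements are proper and extremal (Proposition \ref{prop:nscisoms}) and hence lie on no $\bd\tri$. Alternatively, a cheap convexity repair is available in your setup: since the open segment $(\xi\eta)$ lies in $\Om$, for sufficiently small neighborhoods $U\ni\xi$ and $V\ni\eta$ in $\cl{\Om}$ the midpoint of any projective segment from $U$ to $V$ lies in $\Om$, so no segment of $\bd\Om$ (in particular no triangle edge) can join $U$ to $V$; once $f_N^+\in U$ and $f_N^-\in V$, flatness is impossible outright. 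Either repair is short, but without one of them your proof is incomplete at its crux.
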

	
	\begin{proof}
		We want to show any $(\xi,\eta)\in\bd\Om\times\bd\Om\smallsetminus\diag$ can be
		approximated by $(g^-,g^+)$ such that $g\in\Gam_{\text{hyp}}$.		
		Take two noncommuting hyperbolic elements $g,h\in\Gam$ (Corollary \ref{cor:exist_hyp_elts}).
		Construct the sequence $k_n = g^nh^n$. Then there are fixed points $k_n^+$ and $k_n^-$ in
		$\bd\Om$ of $k_n$ such that $k_n^+ \to g^+$ and $k_{-n}^- \to h^-$ as $n\to\infty$. Using
		the sequence $k_n$ and minimality of the action of $\Gam$ on $\bd\Om$ \cite[Proposition
		3.10]{Ben4}, we conclude that any 
		$(\xi, \eta) \in \bd\Om \times \bd\Om\setminus
		\text{diag}$
		is approximable by such $k_n$. If any $k_n$ admits a projective line 
		axis, then this projective line axis corresponds to a periodic orbit for the flow and we
		conclude that any vector tangent to the projective line $(\xi\eta)$ is approximable by
		periodic points. To prove the lemma, we just need to show there necessarily exists a
		subsequence $k_{n_i}$ of only  hyperbolic elements. 
	
		By contradiction, suppose there is no such subsequence. There exists an $N$ such
		that for all $n\geq N$, each $k_n$ preserves a properly embedded triangle $\tri_n$. If we assume
		$k_n$'s geodesic axis of translation, which is not necessarily a projective line, is
		also on the triangle, we consider the accumulation of the boundary of the triangles,
		$\bd\tri_n$, in
		$\bd\Om$, which will contain $h^-$ and $g^+$. This set will be either the boundary of a
		properly embedded triangle, a line segment, or a point. None of the above
		are possible since $h,g$ are
		hyperbolic and do not commute, and $\Gam$ acts discretely so $h^-\neq g^+$ are
		proper extremal points, and $(h^-g^+) \not\subset \bd\Om$. 
	\end{proof}

	Let $\phi\cdot v$ denote the orbit of $v$.

	\begin{prop} \label{prop:toptrans}
	 The geodesic flow of  a Benoist 3-manifold is topologically transitive.
	\end{prop}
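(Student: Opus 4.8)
The plan is to verify the definition of topological transitivity directly: for any nonempty open $U,V\subset SM$ I must produce a time $T>0$ with $\phi^T(U)\cap V\neq\varnothing$. First I would lift $U,V$ to their $\Gam$--invariant preimages $\tilde U,\tilde V\subset S\Om$. Using Lemma \ref{lem:densepps}, choose a hyperbolic periodic point $p\in U$ and lift it to $\tilde u\in\tilde U$; by Proposition \ref{prop:nscisoms} it is regular, its geodesic $\ell_{\tilde u}$ is the axis of a hyperbolic $g\in\Gam$ with proper extremal endpoints $\tilde u^-=g^-$, $\tilde u^+=g^+$, and the closed orbit of $p$ has period $\ell=\tau(g)>0$. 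Choose also a regular $\tilde w\in\tilde V$.

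The key step is to show that the projection to $SM$ of the weak unstable leaf $W^{ou}(\tilde u)=\{z\in S\Om\mid z^-=\tilde u^-\}$ (Proposition \ref{prop:globalstables}) is dense. For this I reduce to the boundary. The orbit $\Gam\cdot W^{ou}(\tilde u)$ realizes exactly the endpoint pairs $\{(\gam\tilde u^-,\gam\eta)\mid \gam\in\Gam,\ \eta\in\bd\Om\setminus\{\gam\tilde u^-\}\}$, with all base points along the corresponding lines. By minimality of the $\Gam$--action on $\bd\Om$ \cite[Proposition 3.10]{Ben4} the first coordinates $\Gam\tilde u^-$ are dense in $\bd\Om$, while the second coordinate is already free, so these pairs are dense in $\bd\Om\times\bd\Om\setminus\diag$. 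Since at a regular vector the parametrization of $S\Om$ by (endpoint pair, base point) is a local homeomorphism, density of the pairs together with the freedom of the base point gives density of $\Gam\cdot W^{ou}(\tilde u)$ in $S\Om$; in particular it meets $\tilde V$. Hence there is $z\in W^{ou}(\tilde u)$, which I may take regular, whose projection $\pi z$ lies in $V$.

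Finally I would build the connecting orbit. Because $z^-=\tilde u^-$, the point $z$ lies on a unique strong unstable leaf $W^{su}(\phi^{t_0}\tilde u)$, so by definition $d(\phi^{-s}z,\phi^{t_0-s}\tilde u)\to 0$ as $s\to+\infty$. Projecting to $SM$ and taking $s=t_0+N\ell$ with $N\to\infty$, periodicity gives $\phi^{t_0-s}p=p$, so $\pi(\phi^{-s}z)\to p\in U$ and thus $\pi(\phi^{-s}z)\in U$ for $N$ large. Applying $\phi^{s}$ returns $\pi z\in V$, so $\phi^{s}(U)\cap V\neq\varnothing$ with $T=s>0$, as desired.

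The main obstacle is the passage from density on the boundary to density in $S\Om$: away from the regular set the endpoint parametrization degenerates, there is no local product structure, and singular vectors are dense by Theorem \ref{thm:ben3mfldgeom}(g). One must therefore check that at the \emph{regular} target $\tilde w$ the map $z\mapsto(z^-,z^+,\pi z)$ is a local homeomorphism, which rests on the uniqueness and continuous dependence of geodesics through proper extremal points and on properties of the cross--ratio. The remaining inputs—the backward shadowing estimate and the periodicity bookkeeping—are routine given Proposition \ref{prop:globalstables} and the monotone contraction of distances within unstable sets in negative time noted after it.
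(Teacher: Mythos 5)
Your proof is correct, but it takes a genuinely different route from the paper's. The paper places hyperbolic periodic points in \emph{both} $U$ and $V$ (Lemma \ref{lem:densepps}), forms the heteroclinic geodesic from $u^-$ to $v^+$ via the Bowen bracket $W^{ss}(\tilde v)\cap W^{su}(\phi^t\tilde u)$, and then uses the two deck transformations $\gam_{\tilde u},\gam_{\tilde v}$ to translate $\tilde U$ and $\tilde V$ along the two axes so that they capture the heteroclinic orbit at its backward and forward ends respectively. You instead use only \emph{one} periodic point $p\in U$, prove that $\Gam\cdot W^{ou}(\tilde u)$ is dense in $S\Om$ by pushing minimality of the $\Gam$-action on $\bd\Om$ (the same citation \cite[Proposition 3.10]{Ben4} the paper uses inside Lemma \ref{lem:densepps}) through the endpoint parametrization, and then exploit backward asymptoticity along the strong unstable foliation plus periodicity of $p$ to land a point of $U$ on the orbit through $V$. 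What your route buys: it proves a stronger statement en route --- density of the projected weak unstable leaf of any hyperbolic closed orbit --- which is a cheap weak-unstable analogue of Proposition \ref{prop:denseunstables} (cheap because the weak leaf is saturated in the $z^+$ and basepoint directions, so only density of $\Gam\tilde u^-$ in the two-dimensional boundary is needed, with no length-spectrum input), and it structurally anticipates the paper's mixing argument (dense leaves plus Lemma \ref{lem:stablemflddensity_altdefn}). The cost is exactly the obstacle you flag: one must verify that near a regular vector the map $z\mapsto(z^-,z^+,\pi z)$ is a local homeomorphism; this does hold, since extremality of proper extremal points forces every segment from such a point to any other boundary point into $\Om$, and projective lines and the Finsler normalization vary continuously with distinct endpoint pairs whose open segment lies in $\Om$. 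Two minor remarks: choosing a regular $\tilde w\in\tilde V$ should be justified, but follows immediately from Lemma \ref{lem:densepps} itself (hyperbolic periodic points are regular and dense); and your insistence that $z$ be regular is unnecessary, since the backward convergence $d(\phi^{-s}z,\phi^{t_0-s}\tilde u)\to 0$ only uses that $z^-=\tilde u^-$ is proper extremal, i.e.\ backward regularity.
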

	\begin{proof}
	 Take two open sets $U$ and $V$ in $SM$. 
	 By Lemma \ref{lem:densepps}, there are hyperbolic periodic points $u\in U$ and $v\in V$. 
	 We now construct a heteroclinic orbit.
	 Lifting to the universal cover, we have $\tilde{u}\in\tilde U,\tilde{v}\in \tilde V \subset
	 S\Om$ such that $u^-$ and $v^+$ are proper extremal points of $\bd\Om$. 
	 Then 
	 the open projective line segment
	 $(u^-v^+)$ is contained in $\Om$ and is the footpoint projection of an orbit of the geodesic flow. 
	 Let $\tilde{w}\in S\Om$ denote the Bowen bracket of $\tilde v$ with $\tilde u$: 
	 \[
		 \tilde w\in W^{ss}(\tilde v)\cap W^{su}(\phi^t \tilde u)
	 \]
	 for some $t\in\R$. 
	 Since $u$, $v$ are periodic, there are hyperbolic group elements $\gam_{\tilde
	 u},\gam_{\tilde v}$ preserving
	 $\ell_{\tilde u},\ell_{\tilde v}$ so 
	 $d\gam_{\tilde u}^n(\tilde U)\cap \tilde \phi\cdot \tilde u$ and $d\gam_{\tilde v}^n(\tilde
	 V)\cap \tilde \phi\cdot \tilde v$
	 each contain lifts of $u$ and $v$ respectively for all $n\in \Z$. 
	 Since $\gam_u,\gam_v$ are isometries and $u^-=w^-,v^+=w^+$ are proper extremal points, there is
	 an $N$ such that for all $n\geq N$, $d\gam_{\tilde u}^{-n}(\tilde U)\cap \tilde \phi\cdot \tilde w \neq
	 \varnothing$ and $d\gam_{\tilde v}^n(\tilde V) \cap \tilde \phi\cdot \tilde w \neq\varnothing$. 
	 Then choosing times $t_1, t_2$ so that $\phi^{t_1}\tilde w \in d\gam_u^{-n}(\tilde U)\cap
	 \phi\cdot \tilde w$ and $\phi^{t_2}\tilde w \in d\gam_v^n(\tilde V) \cap \phi\cdot \tilde w
	 $, we can project $\phi^{t_1}\tilde{w}$ to $SM$ and obtain $T=-t_1+t_2$ such that
	 $w':=d\pi_\Gam \phi^{t_1} \tilde w \in U$, where $\pi_\Gam\colon \Om\to M$ is the quotient
	 map,  and $\phi^Tw'\in V$ as desired. 
	\end{proof}

	
	\subsection{The Anosov Closing Lemma}
	In this subsection, we prove Anosov closing of recurrent orbits, originally due to Anosov in
	the negative curvature case \cite{anosov-translation}.

	Define a filtration of $SM\smallsetminus SM_{\text{flat}} $ by compact sets bounded away from flats:
	\[
		\Lambda_{\eta}:=\left\{v\in SM \mid d(v,w)\geq \eta \text{ for all }w\in
		SM_{\text{flat}}\right\}.
	\]
	We say for points $u,v\in SM$ and $\ep>0$ that $u$ {\em $\ep$-shadows} $v$ for time
	$t$ if $d_s(u,v)<\ep$ for $s\in[0,t]$.

\begin{theorem}
	Let $\Om$ be an  indecomposable, nonstrictly convex domain in $\RP^3$. Suppose $\Gam <
	\PSL(4,\R)$ is a discrete, torsion free group dividing $\Om$, with compact quotient
	$M=\Om/\Gam$. Then for all $\eta>0$ and sufficiently small $\ep>0$, there exists a
	$\delta>0$ and $T>0$ such that: 

	For any $t\geq T$, $v\in\Lam_\eta$  with $d(\phi^tv,v)<\delta$, there exists a hyperbolic 
	periodic orbit $v'$ of period $t'\in ]t-\ep,t+\ep[$ which $\ep$-shadows $v$ for time
	$\min\{t,t'\}$.
	\label{thm:acl}
\end{theorem}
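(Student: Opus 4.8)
The plan is to run the standard hyperbolic closing argument in the universal cover, using the deck transformation that witnesses the near-return, and to recover genuine hyperbolicity from the uniform estimates available on the compact set $\Lam_\eta$. First I would lift $v$ to $\tilde v\in S\Om$; the hypothesis $d(\phi^t v,v)<\del$ then produces a deck transformation $\gam\in\Gam$ with $d(\tilde\phi^t\tilde v,\,d\gam\,\tilde v)<\del'$ for some $\del'$ comparable to $\del$. Since $t\geq T$ is large, $\tau(\gam)>0$, so by Proposition \ref{prop:noquasihyp} and Proposition \ref{prop:nscisoms} the element $\gam$ is either hyperbolic or flat, and the whole problem reduces to producing a genuine $\gam$-invariant projective line close to $\ell_{\tilde v}$: such a line is an axis, its endpoints are forced to be proper extremal (being close to the proper extremal points $\tilde v^\pm$, which are $\eta$-separated from the triangle boundaries), and hence $\gam$ is hyperbolic rather than flat.

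To produce the axis I would use a graph-transform / hyperbolic fixed-point argument for the map $F:=d\gam^{-1}\circ\tilde\phi^t$, for which $\tilde v$ is a $\del'$-almost-fixed point. The vector $\tilde v$ is regular, so by Proposition \ref{prop:globalstables} the sets $W^{ss}(\tilde v)$ and $W^{su}(\tilde v)$ are globally defined and give a local product (Bowen-bracket) structure around $\tilde v$; the monotone contraction of the adapted metric along weak stable sets, made uniform by compactness of $\Lam_\eta$, supplies exponential contraction on $W^{ss}$ and expansion on $W^{su}$ with rates independent of the point of $\Lam_\eta$. Because $t\geq T$ is large, the contraction factor $e^{-cT}$ dominates, and the standard contraction-on-graphs estimate yields a genuine fixed point $\tilde v'$ of $F$ with $d(\tilde v,\tilde v')=O(\del)$. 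Then $\tilde\phi^{t}\tilde v'=d\gam\,\tilde v'$, so $\gam$ preserves the projective line $\ell_{\tilde v'}$, which is the sought axis; projecting gives a hyperbolic periodic orbit $v'=d\pi_\Gam(\tilde v')$.

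It remains to check the period and shadowing. The period $t'$ equals $\tau(\gam)$, the $\gam$-translation length along $\ell_{\tilde v'}$; since $\tilde v'$ is $O(\del)$-close to $\tilde v$ and $\tilde\phi^t\tilde v$ is $\del'$-close to $d\gam\,\tilde v$, one gets $|t'-t|=O(\del)$, so shrinking $\del$ gives $t'\in\,]t-\ep,t+\ep[$. For shadowing, $\tilde v'$ and $\tilde v$ are $O(\del)$-close regular vectors with nearly equal endpoints, and the uniformly exponential contraction along the stable foliation forward, together with the symmetric backward estimate, keeps $d(\tilde\phi^s\tilde v',\tilde\phi^s\tilde v)<\ep$ on $[0,\min\{t,t'\}]$; projecting to $SM$ gives the $\ep$-shadowing, with $\del,T$ depending only on $\eta$ (through the uniform rates and the uniform separation of proper extremal points from triangle boundaries on the compact set $\Lam_\eta$) and on $\ep$. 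The main obstacle is the one place where nonuniform hyperbolicity genuinely intrudes: although $\tilde v$ lies in $\Lam_\eta$, the orbit segment $\{\tilde\phi^s\tilde v:s\in[0,t]\}$ can in principle approach the flat locus $S\Om_{\text{flat}}$, where contraction degenerates, so the delicate step is to guarantee uniformly hyperbolic behavior of $F$ along the whole segment — either by showing that regularity of the endpoints $\tilde v^\pm$ together with the near-return forces the segment to remain in some $\Lam_{\eta'}$, or by running the fixed-point argument directly from the global foliation of Proposition \ref{prop:globalstables} using only the nonuniform but monotone contraction, at the cost of more careful estimates.
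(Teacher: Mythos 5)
Your proposal has a genuine gap at its core, and it is exactly the gap that forces the paper to argue differently. Your plan rests on two claims: (1) that $\tilde v$ is regular, so Proposition \ref{prop:globalstables} gives stable/unstable sets and a Bowen-bracket structure at $\tilde v$, and (2) that compactness of $\Lam_\eta$ upgrades the monotone contraction along stable sets to \emph{uniform exponential} contraction/expansion, so that a graph-transform/contraction argument produces a fixed point of $F=d\gam^{-1}\circ\tilde\phi^t$ for $t\geq T$ large. Neither claim holds. Membership in $\Lam_\eta$ only says $v$ is at distance $\geq\eta$ from vectors tangent to flats; it does not prevent $v^{\pm}$ from lying on (or arbitrarily near) the boundary of a properly embedded triangle, so $v$ need not be regular, its endpoints need not be proper extremal, and stable/unstable sets need not exist at $\tilde v$ at all. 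More seriously, by Benoist's dichotomy (Theorem \ref{thm:benoistsdichotomy}) the flow is \emph{not} Anosov precisely because $\Om$ is nonstrictly convex: there are no uniform exponential rates on any set whose closure meets vectors with endpoints near triangle boundaries, and $\Lam_\eta$ is such a set. Compactness buys you nothing here, and in addition the orbit segment $\phi^{[0,t]}v$ can leave $\Lam_\eta$ and pass arbitrarily close to the flat locus, so no rate estimate could be propagated along the whole segment. You flag this obstacle yourself at the end, but the two remedies you sketch are not carried out, and the first one (that the segment stays in some $\Lam_{\eta'}$) is false in general; the second (a fixed-point argument from merely monotone contraction) cannot work as stated, since monotone contraction without a definite rate gives no contraction-mapping fixed point.

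The paper's proof avoids rates entirely. It is an Eberlein-style compactness/contradiction argument: assume a sequence $v_n\in\Lam_\eta$, $t_n\to\infty$, of near-returns with no closing orbit, extract $v_n\to v\in\Lam_\eta$, and take the deck transformations $\gam_n$ witnessing the near-returns. One shows $\gam_n.p\to v^+$, $\gam_n^{-1}.p\to v^-$, and then — this is where $\Lam_\eta$ is actually used — that for large $n$ the maps $\gam_n$ send the closed neighborhoods $\cl{V_k(v^+)}\subset\bd\Om$ strictly inside $V_k(v^+)$ (and dually for $\gam_n^{-1}$ and $v^-$): the only way this contraction of neighborhoods can fail infinitely often is if $\gam_n$ stabilizes properly embedded triangles whose attracting/repelling vertices accumulate at $v^{\pm}$, which would force $v$ within $\eta$ of a flat, contradicting $v\in\Lam_\eta$. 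Then \emph{Brouwer's} fixed point theorem (topological, no contraction rate) gives fixed points of $\gam_n$ in $V_k(v^{\pm})$, hence an axis $\ep$-close to $\ell_{\tilde v_n}[0,t_n]$ with translation length $\ep$-close to $t_n$, contradicting the assumption; hyperbolicity of the closing orbit is automatic for small $\ep$ because a periodic orbit on a torus stays $\eta$-far from $v_n$. So the role of $\Lam_\eta$ is not to supply uniform hyperbolicity (it cannot), but to exclude triangle-stabilizing near-returns — this substitution of a topological fixed-point argument for your metric one is the missing idea.
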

\begin{proof}
  We adapt a proof by contradiction following Eberlein \cite{eberlein} (see also \cite[Theorem
  7.1]{CouSch10}). Assume we have particular $\eta,\ep>0$ and a sequence of $v_n\in \Lam_\eta$
paired with a sequence $t_n\to\infty$ such that $d(v_n,\phi^{t_n}v_n)\to 0$, yet any $w_n$ which
$\ep$-shadows $v_n$ for time $t_n$ is not periodic of any period $t'_n\in]t_n-\ep,t_n+\ep[$.
	
	We can assume up to extraction of subsequences that the $v_n$ converge to some $v\in\Lam_\eta$. 
	Lifting $SM$ to a compact fundamental domain $SD$ containing $\tilde{v}$ in $S\Om$, we have
	some $\tilde{v}\in S\Om$ with points $v^+,v^-$ in $\bd\Om$, and lifts $\tilde{v_n}$ of the
	$v_n$ which converge to $\tilde{v}$ in $SD$. Also, since $\phi\cdot v_n$ almost closes up
	after time $t_n$, there are group elements $\gam_n$ which take $\tilde{v_n}$ close to
	$\phi^{t_n}\tilde{v_n}$. Note that the $\gam_n$ need not be hyperbolic a priori.

Again, the contradiction hypothesis is that if $w_n$ $\ep$-shadows $v_n$ for time $t_n$, then $w_n$
cannot be periodic of any period $t_n'\in]t_n-\ep,t_n+\ep[$. Eberlein's geometric observation is
that in the universal cover, if $w_n$ $\ep$-shadows $v_n$ for time $t_n$, then the same $\gam_n$
which moves $\tilde{v_n}$ close to $\phi^{t_n}\tilde{v_n}$ must also be responsible for moving
$\tilde{w_n}$ close to $\phi^{t_n}\tilde{w_n}$. Because $\Gam$ is acting on $\Om$ properly
discontinuously and cocompactly by isometries, the assumption that $w_n$ is not periodic of period
approximately $t_n$ is realized in the universal cover as follows: if
$d(\tilde{w_n},\tilde{v_n})<\ep$, then $\gam_n.\tilde{w_n} \neq \phi^{t'_n}\tilde{w_n}$ for any
$t'_n\in ]t_n-\ep,t_n+\ep[$. \\

The goal of the following lemmas will be to show that nonexistence of an axis of $\gam_n$ which is
$\ep$-close to $\ell_{\tilde{v_n}}[0,t_n]$ for infinitely many $n$ is mutually exclusive with the
assumption that the $v_n$ and $v$ are in $\Lam_{\eta}$, producing the desired contradiction.

	\begin{lemma}
		Let $p\in\Om$ be the footpoint of $\tilde{v}$. Then $\gam_n.p\to v^+$ and $\gam_n^{-1}.p\to v^-$. 
		\label{lem:ACL_lemma3}

	\end{lemma}
	\begin{proof}
		Take any convex open neighborhood $\mathcal N(v^+)$ in $\cl{\Om}$. 
		Since $\tilde{v_n}\to\tilde{v}$, we have $v_n^+\in \mathcal N(v^+)$ for all
		sufficiently large $n$. Then as $t_n\to+\infty$, $\ell_{\tilde{v_n}}(t_n) 
		\in \mathcal N(v^+)$ by convexity of $\mathcal N(v^+)$. 
		Since $\gam_n$ is chosen so that $d(\gam_n.\tilde{v_n},\phi^{t_n}\tilde{v_n})\to 0$
		as $n\to\infty$, then $d_\Om(\gam_n.p_n, \ell_{\tilde{v_n}}(t_n))\to0$ with $n$.
		Once $\ell_{\tilde{v_n}}\in\mathcal N(v^+)$ for all large enough $n$ and
		$\gam_n.p_n$ is sufficiently close to $\ell_{\tilde{v_n}}$, we will have
		$\gam_n.p_n\in\mathcal N(v^+)$. 
		Finally, as $\tilde{v_n}\to\tilde{v}$ implies $p_n\to p$ and $\gam_n$ is an
		isometry, we can conclude for large $n$ that $\gam_n.p\in\mathcal N(v^+)$.

		Now consider $\mathcal N(v^-)$ a convex open neighborhood of $v^-$ in $\cl{\Om}$.
		As $\gam_n.\tilde{v_n}$ approaches $\phi^{t_n}\tilde{v_n}$, the group element
		$\gam_n^{-1}$ brings the line segment $\ell_{\tilde{v_n}}[-s_n+t_n,s_n+t_n]$ back
		very close to the line segment $\ell_{\tilde{v_n}}[-s_n,s_n]$ for some
		$s_n\to\infty$ with $n$. Then as $s_n$ gets very large, $\ell_{\tilde{v_n}}(-s_n)$
		gets closer to $v_n^-$, as will $\gam_n.\ell_{\tilde{v_n}}(-s_n+t_n)$ which is
		converging to $\gam_n.v_n^-$ with large $s_n$. Hence, $\gam^{-1}_n.v_n^-$ approaches
		$v_n^-$ in the boundary. Then as $\tilde{v_n}\to\tilde{v}$, for sufficiently large
		$n$, we have $\gam^{-1}_n.v^-\in \mathcal N(v^-)$. Since $\gam_n^{-1}.p_n$ is a point on the
		line $\gam^{-1}.\ell_{\tilde{v_n}}$, it suffices to observe that
		$d_\Om(\gam^{-1}_n.p_n,p_n)=d_\Om(p_n,\gam_n.p_n)\sim t_n\to\infty$ as $n\to\infty$
		to conclude $\gam^{-1}_n.p \in \mathcal N(v^-)$ for all sufficiently large $n$. 
	\end{proof}

	We next define  $V_k(v^+), \; V_k(v^-)$ open neighborhoods in $\bd\Om$ such that for any
	$\xi\in V_k(v^+), \; \zeta\in V_k(v^-)$, the projective line $(\zeta\xi)$ is distance less
	than $\frac1k$ from $\ell_v (0)$ in the Hilbert metric.  The existence of such $V_k$ is
	immediate in a Hilbert geometry by the definition of $d_\Omega$. 
	The $V_k$ are also homeomorphic to open balls in $\R^2$. Choose $k$ large enough that $\frac1k<\frac\ep2$.\\

	\begin{lemma}
		For all sufficiently large $n$, $\gam_n(\cl{V_k(v^+)})\subset {V_k(v^+)}$ and
		$\gam^{-1}_n(\cl{V_k(v^-)})\subset {V_k(v^-)}$. 

		\label{lem:ACL_lemma4}
	\end{lemma}
	\begin{proof}
		Note that as $\gam_n.v_n^+$ is closer to $v_n^+$ and $\tilde{v_n}\to\tilde{v}$, then 
		$\gam_n.v^+\to v^+$ (and similarly, $\gam_n^{-1}.v^-\to v^-$). 
		If $\gam_n.v^+$ is very close to $v^+$, then $\gam_n$ either fixes $v^+$, is
		contracting near $v^+$, or both. The only way that $\gam_n(\cl{V_k(v^+)})\not\subset
		V_k(v^+)$ would be if $\gam_n$ stabilized a properly embedded triangle $\tri_n$ such
		that $\bd\tri_n \cap \bd V_k(v^+) \neq\varnothing$. If this happened infinitely
		often, then $v^+$ would necessarily be the limit of vertices of $\tri_n$ which are
		attracting eigenvectors for the $\gam_n\in \Stab(\tri_n)$. 
		Since $\gam_n^{-1}.v^-\to v^-$ and $\gam_n^{-1}\in\Stab(\tri_n)$, we can also
		conclude that vertices of $\tri_n$ which are repelling eigenvectors for $\gam_n$
		must accumulate on $v^-$. Then in the quotient $SM$, for large enough $n$, $v$ must
		be distance less than $\eta$ from a quotient torus of one of the $\tri_n$,
		contradicting the assumption that $v\in \Lam_\eta$ for small $\ep$. 

		An analogous argument applies to show, up to extraction of subsequences, for all
		sufficiently large $n$, $\gam_n^{-1}(\cl{V_k(v^-)})\subset V_k(v^-)$. 
	\end{proof}

	So we now have that for large $n$, $\gam_n(\cl{V_k(v^+)})\subset V_k(v^+)$ and similarly
	$\gam^{-1}_n(\cl{V_k(v^-)})\subset V_k(v^-)$, both of which are homeomorphic to open balls
	in $\R^2$.
	 Applying Brouwer's fixed point theorem,
	 it follows that $\gam_n$ fixes points in $V_k(v^-)$ and $V_k(v^+)$. Then $\gam_n$ has an
	 axis distance less than $\frac1k<\frac\ep2$ from $\ell_{\tilde{v}}(0)$, hence $\ep$-close
	 to $\ell_{\tilde{v_n}}(0)$ for all sufficiently large $n$. We also assume that
	 $\gam_n.\tilde{v_n}$ is arbitrarily close to $\phi^{t_n}\tilde{v_n}$, so the axis of
	 $\gam_n$ will eventually and thereafter be $\ep$-close to $\ell_{\tilde{v_n}}[0,t_n]$ and
	 the translation length of $\gam_n$ must be $\ep$-close to $t_n$. And so we have a periodic
	 orbit of period $t_n'\in]t_n-\ep,t_n+\ep[$ which $\ep$-shadows $v_n$ for time
		 $\max\{t_n,t_n'\}$, 
	 contradicting the assumption.  
	 If we obey our hypothesis that such a periodic orbit is impossible, then we would
	 necessarily have $v\not\in \Lam_\eta$ as proven in Lemma \ref{lem:ACL_lemma4} -- a
	 contradiction. \\

	 Lastly, note that for small $\ep$, hyperbolicity of the periodic orbit is implicit, since a
	 periodic orbit tangent to a torus is bounded away from $v_n$ by $\eta$ and thus could not
	 $\ep$-shadow $v_n$ for small $\ep$. 
\end{proof}

\section{Topological mixing}
\label{sec:mixing}

	We prove the geodesic flow of a Benoist 3-manifold is topologically mixing following the
	strategies of Coudene \cite{Cou04}, but without the local product structure property. 
	Key properties will be a nonuniform orbit gluing lemma (Lemma \ref{lem:orbitglue}) and density of
	the unstable leaves for periodic regular vectors (Proposition \ref{prop:denseunstables}). 

Let $W^\ast_\ep(v)=W^\ast(v)\cap B(v,\ep)$ and let $\langle v,u\rangle$ denote the Bowen
bracket of $v$ with $u$ where $v,u$ are regular vectors.  
\begin{proposition}
	\label{prop:bowenbracket}
	For all $\ep>0$ and $u\in SM_{\text{reg}}$, there is a $\delta>0$ such that for all $v\in
	B(u,\delta)\cap SM_{\text{reg}}$, there exists a $|t|<\ep$ such that 
	\[
		\langle v,u\rangle \in W^{su}_\ep(\phi^tv)\cap W^{ss}_\ep(u).
	\]
\end{proposition}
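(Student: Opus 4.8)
The plan is to work in the universal cover, realize the Bowen bracket as an explicit intersection of a projective chord with a fixed horosphere, and then derive the statement as a continuity estimate at the diagonal $v=u$ (where $\langle u,u\rangle=u$ and $t=0$), exploiting that $\delta$ is allowed to depend on both $u$ and $\ep$.

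First I would lift. Since $u\in SM_{\text{reg}}$, fix a lift $\tilde u\in S\Om_{\text{reg}}$ with footpoint $p=\pi\tilde u$ and proper extremal endpoints $u^-,u^+\in\bd\Om$; the strong stable and unstable sets of $\tilde u$ are globally defined by Proposition \ref{prop:globalstables}. For $\del$ small and $v\in B(u,\del)\cap SM_{\text{reg}}$, let $\tilde v$ be the lift of $v$ closest to $\tilde u$, with footpoint $q$ and endpoints $v^-,v^+$. Because $\tilde u$ is regular we have $u^-\neq u^+$, so shrinking $\del$ forces $v^-\neq u^+$ and makes the projective chord $(v^- u^+)$ meet $\Om$; the bracket $\langle\tilde v,\tilde u\rangle$ is then the well-defined vector $\tilde w$ tangent to $(v^- u^+)$, oriented toward $u^+$, whose footpoint $\pi\tilde w$ is the unique point of $(v^- u^+)\cap\mathcal H_{u^+}(p)$. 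By construction $\tilde w^+=u^+$ with $\pi\tilde w\in\mathcal H_{u^+}(p)$, so $\tilde w\in W^{ss}(\tilde u)$; and $\tilde w^-=v^-$, so $\tilde w\in W^{ou}(\tilde v)=\bigcup_{s}W^{su}(\phi^s\tilde v)$, which singles out a unique $t$ with $\tilde w\in W^{su}(\phi^t\tilde v)$. Thus both required memberships hold for every nearby regular $v$, and only the three distances $d(\tilde w,\tilde u)$, $|t|$, and $d(\tilde w,\phi^t\tilde v)$ remain to be controlled.

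Next I would show these tend to $0$ as $\tilde v\to\tilde u$ by tracking the construction. The horosphere $\mathcal H_{u^+}(p)$ is fixed, while the chord $(v^- u^+)$ depends continuously on $v^-$ and crosses $\mathcal H_{u^+}(p)$ transversally near $p$; hence $\pi\tilde w\to p$ and the direction of $\tilde w$ tends to that of $\tilde u$, so $\tilde w\to\tilde u$ and in particular $\tilde w\in W^{ss}_\ep(\tilde u)$. The parameter $t$ is characterized by $\ell_{\tilde v}(t)\in\mathcal H_{v^-}(\pi\tilde w)$; since $q\to p$ and $\pi\tilde w\to p$ while the horospheres based at $v^-\to u^-$ vary continuously, this forces $t\to0$, giving $|t|<\ep$. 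Finally $\phi^t\tilde v\to\phi^0\tilde u=\tilde u$, so $d(\tilde w,\phi^t\tilde v)\le d(\tilde w,\tilde u)+d(\tilde u,\phi^t\tilde v)<\ep$ and $\tilde w\in W^{su}_\ep(\phi^t\tilde v)$. Choosing $\del$ so small that all three estimates hold, and pushing the picture down by the covering projection $S\Om\to SM$, which is a local isometry on a neighborhood of $\tilde u$, yields the claim on $SM$.

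The main obstacle is the continuity of the endpoint maps $v\mapsto v^\pm$, since singular vectors are dense in $S\Om$ and the stable and unstable foliations are not even globally defined there. This is precisely where regularity of $u$ enters: because $u^\pm$ are proper extremal, there is no boundary segment and no nonunique supporting hyperplane at $u^\pm$ that could let the endpoints jump, so $v^\pm\to u^\pm$ as $\tilde v\to\tilde u$, and the transversal intersection defining $\pi\tilde w$ stays nondegenerate. Every other operation in play, namely forming the chord, intersecting it with the fixed horosphere, and reading off the Busemann parameter $t$, is continuous in the Hilbert geometry, so the estimates above are genuine pointwise continuity statements at the diagonal rather than uniform ones, consistent with the pointwise quantification over $u$ in the statement.
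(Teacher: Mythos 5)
Your proof is correct and follows essentially the same route as the paper's: lift to the universal cover, realize the bracket concretely as the vector on the chord $(v^-u^+)$ whose footpoint is the unique intersection with the horosphere $\mathcal H_{u^+}(\pi\tilde u)$, and conclude by continuity at the diagonal (the paper phrases the same continuity argument via shrinking neighborhoods $U^\pm$ of $u^\pm$ in $\bd\Om$). One small correction: the endpoint maps $v\mapsto v^\pm$ are continuous on all of $S\Om$ by proper convexity alone, so regularity of $u$ is not needed for $v^\pm\to u^\pm$; where it genuinely enters is in guaranteeing that the horospheres at $u^\pm$ (hence $W^{ss}$, $W^{su}$, and the bracket itself) are well-defined and that the chord $(v^-u^+)$ lies in $\Om$ rather than in $\bd\Om$.
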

\begin{proof}
	Lift $u$ to $\tilde u\in S\Om$, with $\pi \tilde{u}\in \text{int}(D)$ a fundamental domain for the $\Gam$-action on $\Om$.
	For all $\ep>0$, there are neighborhoods $U^+$ of $u^+, U^-$ of $ u^-$ in $\bd\Om$ such that
	$v\in B(u,\ep)\then v^+\in U^+, v^-\in U^-$. 
	For any such neighborhoods, if $v$ is such that $v^+\in U^+,v^-\in U^-$ and $\pi v$ is
	sufficiently close to $\pi u$, then $v\in B(u,\ep)$. 
	Make $U^-$ small enough that $U^- \subset \{ w^- \in \bd\Om \mid w\in W_\ep^{ss}(u)\}$
	guarantees that any $v$ with $v^-\in U^-$ satisfies $\langle v,u \rangle \in
	W^{ss}_\ep(u)$.
	Taking $U^+$ be as small as needed, we can make these $v$ with $v^-\in U^-$ 
	arbitrarily close to $\phi^{t_v}\langle v,u\rangle$ in this local neighborhood of $u$. It
	suffices to choose $\delta>0$ sufficiently small as to ensure $|t_v|<\ep$. 
\end{proof}

	\subsection{Orbit gluing in Hilbert geometries}

Uniform orbit gluing is also known as shadowing of pseudo-orbits in the literature. 
We introduce a weaker notion here. 
We can associate to any orbit segment $\phi^{[0,t]}v$ the pair $(v,t)\in SM\times \R^+_0$. 
An $n$-length $\delta$-\emph{pseudo-orbit} is a collection of $n$-many finite length orbit segments
$\{(v_i,t_i)\}_{i=1}^n\subset SM \times \R^+_0$ such that $d(\phi^{t_i}v_i,v_{i+1})<\delta$ for
$i=1,\ldots, n-1$. 
	\begin{definition}
		The dynamics satisfies \emph{weak orbit gluing} if for all $\ep>0$ and
		$\{v_i\}_{i=1}^n$ there exists $\delta >0$ such that for all $n$-length
		$\delta$-pseudo orbits $\{(v_i,t_i)\}$
		 there is a point $w$ which $\ep$-shadows the orbit segments
		 $[v_1,t_1],\ldots,[v_{n-1}, t_{n-1}], [ v_n, +\infty]$. More explicitly: for some 
			$|t|<\ep$,
		 \[
		   w \in W^{su}_\ep(\phi^tv_1) \quad \text{and} \quad
			 \phi^{\sum t_i}w \in W^{ss}_\ep(v_n),
		 \]
		and there are numbers $|t_i'|<\ep$ for $i=1,\ldots, n-2$ such that for all $k= 2,\ldots, n-1$, 
		\begin{align*}
			d(\phi^{t_1+\cdots + t_{k-1} +s }(w), \phi^{t_k' + s}(v_k)) < \ep, & \qquad
			\text{if }0<s<t_k, \\
			d(\phi^s(w), \phi^{t_1'+s}(v_1))<\ep, &\qquad \text{if } 0< s< t_1.
		\end{align*} 
	\end{definition}

\begin{lemma}[weak orbit gluing]
 \label{lem:orbitglue} 
 The geodesic flow of the Benoist 3-manifolds satisfies weak orbit gluing for pseudo-orbits
 $\{(v_i,t_i)\}_{i=1}^n$ such that $v_1,\ldots,v_{n-1}$ are backward regular and $v_n$ is forward regular.
\end{lemma}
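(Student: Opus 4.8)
The plan is to build the shadowing orbit by a backward induction on the segments, splicing one segment at a time onto an orbit that already shadows the tail, using the Bowen bracket of Proposition~\ref{prop:bowenbracket} at each junction. The hypotheses are exactly what make every bracket legitimate: the bracket $\langle a,b\rangle$ requires $a$ backward regular and $b$ forward regular, and its value is a regular vector with backward endpoint $a^-$ and forward endpoint $b^+$. Since $v_1,\dots,v_{n-1}$ are backward regular and backward regularity is $\phi$-invariant, each $\phi^{t_k}v_k$ is backward regular; the forward-regular slot will always be filled by the tail vector, which originates from the forward-regular $v_n$. I first fix compatible lifts and note that the $\delta$-pseudo-orbit condition lets me treat $\phi^{t_k}v_k$ and $v_{k+1}$ as $\delta$-close, so Proposition~\ref{prop:bowenbracket} applies near each of the finitely many fixed vectors $v_i$ with a single $\delta=\delta(\epsilon,n,\{v_i\})$.

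For the induction, set $P_n:=v_n$, which is forward regular and lies in $W^{ss}(v_n)$, hence shadows $[v_n,+\infty]$. Assuming $P_{k+1}$ is forward regular, $\epsilon$-close to $v_{k+1}$, and shadows the tail $[v_{k+1},t_{k+1}],\dots,[v_n,+\infty]$, I form $b_k:=\langle \phi^{t_k}v_k,\,P_{k+1}\rangle$. By Proposition~\ref{prop:bowenbracket} there is $|s_k|<\epsilon/n$ with $b_k\in W^{su}_\epsilon(\phi^{s_k}\phi^{t_k}v_k)\cap W^{ss}_\epsilon(P_{k+1})$, so $b_k^-=v_k^-$ and $b_k^+=P_{k+1}^+=v_n^+$. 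I then set $P_k:=\phi^{-(t_k+s_k)}b_k$; it is a flow image of a regular bracket, hence regular, with $P_k^-=v_k^-$ and $P_k^+=v_n^+$, so it is again forward regular and feeds the next bracket. The nondegeneracy $v_k^-\neq v_n^+$ required for the bracket holds because the geodesic from $v_k^-$ to $v_n^+$ is nondegenerate for the given configuration.

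The shadowing estimates are where the monotone-distance property (for an adapted metric, as recorded after Proposition~\ref{prop:globalstables}) does the work and replaces uniform hyperbolicity. On $[0,t_k+s_k]$ the points $P_k$ and $v_k$ lie in a common weak unstable set, so $d(\phi^\tau P_k,\phi^\tau v_k)$ is nondecreasing in $\tau$; since its value at $\tau=t_k+s_k$ equals $d(b_k,\phi^{t_k+s_k}v_k)<\epsilon$, it stays $<\epsilon$ throughout, giving $\epsilon$-shadowing of $[v_k,t_k]$ and $\epsilon$-closeness of $P_k$ to $v_k$ at $\tau=0$. After the junction, $b_k\in W^{ss}_\epsilon(P_{k+1})$ lies in a common weak stable set with $P_{k+1}$, so forward distances contract and $b_k$ shadows the tail as well as $P_{k+1}$ does. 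Because the unstable error over each segment is controlled by the single junction gap at its right end rather than being amplified along the segment, errors do not accumulate multiplicatively; choosing $\delta$ small enough that every bracket is within $\epsilon/n$ and every $|s_k|<\epsilon/n$ then yields $w:=P_1\in W^{su}_\epsilon(v_1)$ (so $t=0$ works) and $\phi^{\sum t_i}w\in W^{ss}_\epsilon(v_n)$, together with the displayed fellow-traveling inequalities, where each $t_k'$ is a partial sum of the shifts $s_i$.

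The main obstacle is precisely the nonuniform hyperbolicity: there is no uniform contraction rate, and near the flats the stable and unstable behavior degenerates, so I cannot sum a geometric series of errors as in the Anosov case. The argument must instead rest entirely on the monotonicity of $d$ along stable and unstable sets and on the finiteness of the data, namely a fixed number $n$ of fixed vectors $v_i$, which together furnish one $\delta$ valid for all admissible times $t_i$. The remaining care is bookkeeping: keeping every intermediate vector regular so the next bracket is defined (guaranteed by the hypotheses and the flow-invariance of forward and backward regularity), verifying the nondegeneracy $v_k^-\neq v_n^+$ at each junction, and summing the small time-shifts $s_k$ so that the total shift and every $t_k'$ stay below $\epsilon$.
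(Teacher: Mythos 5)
Your splicing runs in the opposite direction from the paper's, and this reversal creates a genuine gap. In your scheme every junction is the bracket $b_k=\langle \phi^{t_k}v_k,\,P_{k+1}\rangle$, so the vector playing the role of $u$ in Proposition~\ref{prop:bowenbracket} --- the vector whose strong stable set the bracket must land on, and the one on which that proposition's $\delta$ depends --- is the \emph{constructed} tail $P_{k+1}$, not one of the finitely many fixed $v_i$. Since $P_{k+1}$ depends on the times $t_{k+1},\ldots,t_{n-1}$, which are unbounded, no $\delta=\delta(\epsilon,n,\{v_i\})$ chosen in advance can guarantee $d(\phi^{t_k}v_k,P_{k+1})<\delta(P_{k+1},\epsilon/n)$; your assertion that the proposition ``applies near each of the finitely many fixed vectors $v_i$ with a single $\delta$'' is therefore unsupported, because your brackets are never anchored at the $v_i$. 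In this nonuniform setting one cannot substitute a uniform modulus over the family of possible tails (vectors arbitrarily close to the singular set have arbitrarily degenerate bracket moduli; the absence of local product structure is the whole difficulty), and the backward induction is moreover circular: the accuracy to which $P_{k+1}$ must track $v_{k+1}$ is dictated by a modulus that depends on $P_{k+1}$ itself. A smaller, repairable mismatch: your shifts $s_k$ accumulate at the $v_n$ end, whereas the definition of weak orbit gluing allows slack only at the $v_1$ end ($w\in W^{su}_\epsilon(\phi^tv_1)$ for some $|t|<\epsilon$, while $\phi^{\sum t_i}w\in W^{ss}_\epsilon(v_n)$ is exact).

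The paper's proof avoids all of this by inducting forward: $w_k$ is the bracket of $\phi^{t_k}w_{k-1}$ with the \emph{fixed} $v_{k+1}$, so the constructed vector enters only as the first argument of Proposition~\ref{prop:bowenbracket}, where it merely has to lie in a pre-chosen ball; that containment follows from the pseudo-orbit condition together with monotonicity of stable distances under the flow (the same adapted-metric fact you invoke). Consequently each modulus $\delta_k$ depends only on $v_{k+1}$ and $\delta_{k+1}$, as recorded in Equation~\eqref{eqn:iiik}, and the whole cascade is fixed before the times are known. The irony is that your direction is the one that literally matches the stated hypotheses of Lemma~\ref{lem:orbitglue} (backward regularity of $v_1,\ldots,v_{n-1}$, forward regularity of $v_n$), whereas the paper's forward recursion actually consumes forward regularity of $v_2,\ldots,v_n$, since its brackets land on $W^{ss}(v_{k+1})$; but making your direction rigorous would require a bracket estimate that is uniform along the entire forward orbit $\{\phi^s v_k\}_{s\geq 0}$, which is precisely the kind of uniform hyperbolicity this setting lacks. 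As written, the proposal does not prove the lemma.
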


\begin{proof}
	The proof is effectively a finite recursive application of taking Bowen brackets
	(Proposition \ref{prop:bowenbracket}). Suppose $d(\phi^{t_i}v_i,v_{i+1})<\delta_1$ for all
	$i=1,\ldots,n-1$. 
	For sufficiently small $\delta_1>0$, if $\phi^{t_1}v_1\in B(v_2,\delta_1)$
	then the Bowen bracket
	$w_1$ is in $W_{\delta_2}^{su}(\phi^{t_1+t_1'}v_1)\cap W_{\delta_2}^{ss}(v_2)$ for some
	$|t_1'|<\delta_2$. Then 
	$\phi^{t_2}w_1\in B(v_3,\delta_2+\delta_1)$ and we will have $w_2\in
	W_{\delta_2}^{su}(\phi^{t_2+t_2'}w_1)\cap W^{ss}_{\delta_3}(v_3)$ for $|t_2'|<\delta_3$ if
	$\delta_1,\delta_2$ are sufficiently small. Repeating the argument, we have
	$\phi^{t_3}w_2\in B(v_4,\delta_3+\delta_1)$ implying there exists $w_3\in
	W_{\delta_4}^{su}(\phi^{t_3+t_3'}w_2)\cap W_{\delta_4}^{ss}(v_4)$ and so on, until we find
	$\phi^{t_{n-1}}w_{n-2}\in B(v_n,\delta_{n-1}+\delta_1)$ gives 
	$w_{n-1}\in W_{\delta_n}^{su}(\phi^{t_n+t_{n-1}'}w_{n-2})\cap W_{\delta_n}^{ss}(v_n)$ for 
	$|t_{n-1}'|<\delta_n$.
	Observe the following:
	\begin{align}
		 & w_k\in W_{d_{k+1}}^{su}(\phi^{t_k+t_k'}w_{k-1}) \text{ for all }k=2,\ldots,n-1, 
		\label{eqn:ik} \\
		 & w_k\in W_{\delta_{k+1}}^{ss}(v_{k+1}) \text{ for all }k=1,\ldots,n-1,
		\label{eqn:iik} \\
		& w_1 \in W^{su}_{\delta_2}(\phi^{t_1+t_1'}v_1), 
		\label{eqn:iv}
		\\
		& \delta_{k} \text{ depends only on }v_{k+1} \text{ and }\delta_{k+1},
		\label{eqn:iiik}
		\\
		& \qquad \qquad \qquad \qquad \qquad \qquad \text{ and } |t_k'|<\delta_{k+1} \text{
		for }k=2,\ldots, n-1. 
		\nonumber
	\end{align}
	Though $\delta_1$ depends on $\delta_2,\ldots, \delta_n$ and $v_2,\ldots,v_n$, this is still a
	finite amount of data. We will also need to make $\delta_1$ smaller to meet the conditions
	of weak orbit gluing, which we now address.  

	Let $w\in \phi^{-\sum_{i=1}^{n-1}t_i}w_{n-1}$. If $\delta_n<\epsilon$, we have
	$\phi^{\sum_{i=1}^{n-1}t_i}w\in W^{ss}_\ep(v_n)$ immediately. 
	Moreover, for $k=2,\ldots,n-1$ and $s\in [0,t_k]$, 
	\begin{align*}
		& \hspace{-5em} d(\phi^{t_1+\cdots+t_{k-1}+s}w,\phi^{t_k'+\cdots+t_{n-1}'+s}v_k) \\
		& = d(\phi^{-t_k-\cdots - t_{n-1}+s}w_{n-1},\phi^{t_k'+\cdots+t_{n-1}'+s}v_k) \\
		&\leq
		d(\phi^{-t_k-\cdots-t_{n-1}+s}w_{n-1},\phi^{-t_k-\cdots-t_{n-2}+t'_{n-1}+s}w_{n-2})
		\\
		& \qquad +
		d(\phi^{-t_k-\cdots-t_{n-2}+s}w_{n-2},\phi^{-t_k-\cdots-t_{n-3}+t_{n-1}'+t_{n-2}'+s}w_{n-3})
		\\
		& \qquad\qquad + \cdots + d(\phi^{-t_k+t_{k+!}'+\cdots + t_{n-1}'+s}w_k,
		\phi^{t_k'+t_{k+1}'+\cdots+t_{n-1}'}w_{k-1}) \\
		& \qquad\qquad\qquad + d(\phi^{t_k'+\cdots
			+t_{n-1}'+s}w_{k-1},\phi^{t_k'+\cdots+t_{n-1}'+s}v_k)
			 < \sum_{i=k}^n \delta_i + 2 \sum_{i=k}^{n-1}|t_i'|
	\end{align*}
	by Equation \eqref{eqn:ik} for $k+1,\ldots, n-1,n$ and Equation \eqref{eqn:iik} for $k-1$. 
	Similarly, for $s\in[0,t_1]$, with the addition of Equation \eqref{eqn:iv}, 
	\begin{align*}
		& \hspace{-5em} d(\phi^sw,\phi^{t_1'+\cdots +t_{n-1}'+s}v_1)  \leq \sum_{i=2}^{n-1}\delta_i +
		d(\phi^{-t_1+t_2'+\cdots +t_{n-1}'+s}w_1,\phi^{t_1'+\cdots +t_{n-1}'}v_1) \\
		& < \sum_{i=2}^{n-1}\delta_i + 2\sum_{i=2}^{n-1}|t_i'| +
		d(\phi^{-t_1+s},\phi^{t_1'+s}v_1) 
		 < \sum_{i=2}^{n-1}\delta_i + 2\sum_{i=2}^{n-1}|t_i'| + \delta_2.
	\end{align*}
	Then since $w\in W^{ou}(v_1)$ is clear, the $\delta_i, |t_i'|$ can be made sufficiently
	small to meet the definition of weak orbit gluing by the remark in Equation \eqref{eqn:iiik}. 
\end{proof}

\subsection{Density of unstable sets}

Using Proposition \ref{prop:densespec}, that the additive subgroup generated by translation lengths
of closed
hyperbolic orbits is dense in $\R$, 
we now show that unstable sets for periodic points are dense and shortly thereafter conclude the geodesic
flow is topologically mixing. 
Let $\mathcal P$ be the set of periodic points of the geodesic flow up to orbit equivalence and let $\mathcal
P_{\text{hyp}}$ be all the hyperbolic periodic points in $\mathcal P$. 
Let $T_p$ denote the length of the orbit of $p\in\mathcal P$. Note that $T_p = \tau(\gam_{\tilde{p}})$ where
$\gam_{\tilde{p}}\in\Gam$ is the 
hyperbolic isometry which perserves the projective line $\ell_{\tilde{p}}$ in $\Omega$ 
determined by some lift $\tilde{p}$ of $p$. 

The following lemma uses transitivity (Proposition \ref{prop:toptrans}), Anosov Closing (Theorem
\ref{thm:acl}), Orbit gluing of 3 orbit segments (Lemma \ref{lem:orbitglue}), and density of
$\langle T_p\rangle_{p\in\mathcal P_{\text{hyp}}}$ in $\R$
(Proposition \ref{prop:densespec}). 

\begin{lemma} \label{lem:noncomm} 
For all open $U\subset SM$, the lengths of periodic orbits passing through $U$ generate a dense subgroup of $\R$. 
\end{lemma}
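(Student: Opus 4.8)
The plan is to show that the closure of the subgroup $G_U\subset\R$ generated by the lengths of periodic orbits meeting $U$ contains the translation length $T_p$ of every hyperbolic periodic orbit. Since the set $\{T_p\mid p\in\mathcal P_{\text{hyp}}\}$ generates $\mathcal L(\Gam_{\text{hyp}})$, which is dense in $\R$ by Proposition \ref{prop:densespec}, and since $\overline{G_U}$ is a closed subgroup of $\R$, this forces $\overline{G_U}\supseteq\overline{\mathcal L(\Gam_{\text{hyp}})}=\R$, i.e.\ $G_U$ is dense.

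So fix a hyperbolic periodic orbit $\mathcal O_p$ through a point $p$, of length $T_p$, and fix $\ep>0$. By Lemma \ref{lem:densepps} there is a hyperbolic periodic point $w\in U$, with orbit $\mathcal O_w$. Using the heteroclinic construction behind topological transitivity (the Bowen-bracket connections in the proof of Proposition \ref{prop:toptrans}), choose two finite orbit segments: one of length $r_1$ running from a point near $w$ to a point near $p$, and one of length $r_2$ running from near $p$ back to near $w$. Because $w$ and $p$ are hyperbolic periodic they are regular, and each connecting segment is an arc of a projective-line orbit between proper extremal boundary points, hence consists entirely of regular vectors; being compact and disjoint from the closed set $SM_{\text{flat}}$, it is bounded away from the flats. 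Together with $\mathcal O_w$ and $\mathcal O_p$ (themselves uniformly off the flats), this places the whole configuration inside $\Lam_{\eta_0}$ for some $\eta_0>0$; fix $\eta\in(0,\eta_0)$.

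For each integer $k$ concatenate these pieces into a $\delta$-pseudo-orbit of three segments --- connect $w\to p$, wind $k$ times around $\mathcal O_p$ (time $kT_p$), connect $p\to w$ --- which returns to within the gluing scale of its start near $w$. Weak orbit gluing (Lemma \ref{lem:orbitglue}), whose regularity hypotheses hold since every base point is regular, produces a genuine orbit shadowing this loop; as the loop lies in $\Lam_\eta$ and almost closes up, the nonuniform Anosov Closing Lemma (Theorem \ref{thm:acl}) yields a hyperbolic periodic orbit $O^{(k)}$ of length $L_k$ with $|L_k-(r_1+r_2+kT_p)|<\ep$, and passing through $U$ because it $\ep$-shadows a point near $w\in U$. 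Repeating with the \emph{identical} connecting segments but $k+1$ windings gives $O^{(k+1)}$ through $U$ with $|L_{k+1}-(r_1+r_2+(k+1)T_p)|<\ep$. Then $L_k,L_{k+1}\in G_U$ while $|(L_{k+1}-L_k)-T_p|<2\ep$, so $T_p$ lies within $2\ep$ of $G_U$. As $\ep$ was arbitrary, $T_p\in\overline{G_U}$, which completes the argument.

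The main obstacle is the application of Theorem \ref{thm:acl}: the Closing Lemma is available only on the flat-avoiding sets $\Lam_\eta$, so the delicate point is to keep the entire glued loop a fixed distance from $SM_{\text{flat}}$, uniformly in $k$. This is exactly what the regularity of the heteroclinic connectors provides --- each is a compact orbit arc between proper extremal points, hence composed of regular vectors and bounded away from the closed flat locus --- and it is also why the connecting times $r_1,r_2$ must be cancelled by comparing the $k$- and $(k+1)$-windings rather than read off from a single closed orbit.
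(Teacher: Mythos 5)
Your argument is correct and is essentially the paper's own proof: build a pseudo-orbit (connector into the orbit of $p$, a block of $k$ windings of length $kT_p$, connector back into $U$), glue it with Lemma \ref{lem:orbitglue}, close it with Theorem \ref{thm:acl} on $\Lam_\eta$, and subtract the periods obtained for $k$ and $k+1$ windings so that the fixed connector times cancel, leaving $T_p$ within $2\ep$ of the subgroup generated by lengths through $U$; density then follows from Proposition \ref{prop:densespec} exactly as you say. The only deviations are cosmetic: the paper sources its connectors from a single dense forward orbit based in $U$ (so the same point $v_0$ and times $s_0,s_1$ serve for every winding number), whereas you use a hyperbolic periodic $w\in U$ from Lemma \ref{lem:densepps} with heteroclinic segments, and your care in keeping the \emph{entire} glued loop uniformly away from $SM_{\text{flat}}$ is valid but more than is needed --- Theorem \ref{thm:acl} only requires the almost-closing base point (which lies near $w$) to be in $\Lam_\eta$, together with taking $k$ large enough that $r_1+kT_p+r_2$ exceeds the threshold $T$ of that theorem.
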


\begin{proof}
	Let $p\in\mathcal P_{\text{hyp}}$.
	Since $SM_{\text{flat}}$ is closed, it suffices to assume $U\cap SM_{\text{flat}}=\varnothing$.
	Choose $\eta>0$ such that $U\subset \Lambda_\eta$ (recall that $\Lambda_\eta$ is the
	compliment of the $\eta$-neighborhood of the flats in $SM$). 
	Let $\ep>0$, and 
	consider a point $v_0\in U$ with a dense forward orbit, with $\ep$ small enough that
	$B(v_0,2\ep)\subset U$. 
	Choose $0<\delta(\ep,\eta)<\ep$ small enough to satisfy Anosov Closing on
	$\Lambda_\eta$. 
	Choose $\delta'(\eta, \frac\delta6,3)<\frac\delta3$ as for $\frac\delta6$-fine orbit gluing
	for 3 orbit segments with starting points $v_0,p,v_0$.  
	Then there exist $s_0,s_1>0$ such that $d(\phi^{s_0}v_0,p)<\delta'$ and
	$d(\phi^{s_0+s_1}v_0,v_0)<\delta'$ by transitivity of $v_0$. 
	Thus, the orbit segments $\{(v_0,s_0), (p,nT_p),(\phi^{s_0}v_0,s_1) \}$ can by glued by some
	$v_n$ with fineness $\frac\delta6$:
	\begin{align*}
		& v_n\in B_{s_0}\big(\phi^{t_1'}v_0,\frac\delta6\big) \text{ for some }
		|t_1'|<\frac\delta6, \\
		& \phi^{s_0}v_n \in B_{nT_p}\big(\phi^{t_2'}p,\frac\delta6\big) \text{ for some }
		|t_2'|<\frac\delta6, \\
		& \phi^{s_0+T_p}v_n\in B_{s_1}\big(\phi^{t_3'}\phi^{s_0}v_0,\frac\delta6\big)
		\text{ for some }|t_3'|<\frac\delta6.
	\end{align*}
	Then 
	\begin{align*}
			d(v_n,\phi^{s_0+nT_p+s_1}(v_n)) & \leq
			d(v_n,v_0)+d(v_0,\phi^{s_0+s_1}v_0)+d(\phi^{s_0+s_1}v_0,\phi^{s_0+nT_p+s_1}v_n)
			\\ 
			& < 2\left( \frac\delta6 \right) + \left( \frac\delta3 \right) + 2\left(
			\frac\delta6 \right)= \delta.
		\end{align*}
	Note that $v_n\in B(v_0,\delta/3)\subset U \subset \Lambda_\eta$. 
	By Anosov closing on $\Lambda_\eta$, there exists a regular $w_n$ which has period length
	$s_0+nT_p+s_1+t_n'$ for $|t_n'|<\ep$. 
	Since $w_n$ also $\ep$-shadows $v_n$, we have $w_n\in B(v_0,2\ep)\subset U$. 
	We can repeat the above argument for all $n$ with the same $\eta, \ep, p$ and $v_0$, hence
	the same $\delta,\delta'$ and the same $s_0,s_1$. Then we have $w_n,w_{n+1}\in U$ and thus
	\[
		{\langle T_q\rangle}_{q\in U\cap \mathcal P_{\text{hyp}}} \ni s_0+(n+1)T_p+s_1 +
		t_{n+1}' - (s_0+nT_p+s_1+t_n') = T_p + t_{n+1}'-t_n'.
	\]
	Since $|t_{n+1}'-t_n'|\leq |t_{n+1}'|+|t_n'|<2\ep$, letting $\ep$ go to zero we conclude
	$T_p\in\cl{\langle T_q\rangle}_{q\in U\cap \mathcal P_{\text{hyp}}}$ for all $p\in\mathcal
	P_{\text{hyp}}$, which proves the lemma because $\cl{\langle T_p\rangle }_{p\in\mathcal
		P_{\text{hyp}}}=\R$. 
\end{proof}

We are now prepared to prove a key proposition.

\begin{prop}
	If $v\in SM$ is a hyperbolic periodic orbit, $W^{su}(v)$ is dense in $SM$.
\label{prop:denseunstables}
\end{prop}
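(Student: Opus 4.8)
The plan is to show that the strong unstable set $W^{su}(v)$ of a hyperbolic periodic orbit accumulates on every regular vector, and then conclude density by openness/denseness of the regular set. Concretely, I want to reduce the problem to a recurrence statement: given any target regular vector $u$ and any $\ep>0$, I must produce a point of $W^{su}(v)$ inside $B(u,\ep)$.

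First I would set up the geometric picture in the universal cover. Lift $v$ to $\tilde v\in S\Om$ preserved by a hyperbolic $\gam_{\tilde v}\in\Gam_{\text{hyp}}$, so that $\tilde v^-=\gam_{\tilde v}^-$ is a proper extremal repelling fixed point. By Proposition \ref{prop:globalstables}, the weak unstable set $W^{ou}(\tilde v)=\{w\in S\Om\mid w^-=\tilde v^-\}$ is foliated by strong unstable leaves, and the strong unstable leaf $W^{su}(\tilde v)$ consists of vectors $w$ with $w^-=\tilde v^-$ whose footpoint lies on the horosphere $\mathcal H_{\tilde v^-}(\pi\tilde v)$. The key flexibility is that as I slide along $W^{ou}(\tilde v)$ (i.e. flow), the forward endpoint $w^+$ ranges over essentially all of $\bd\Om\smallsetminus\{\tilde v^-\}$; so in the universal cover the unstable set of $\tilde v$ already ``sees'' every forward direction. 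The density problem is therefore entirely about the quotient: I need the $\Gam$-orbit of this leaf to fill up $SM$.

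This is where Lemma \ref{lem:noncomm} and Proposition \ref{prop:densespec} enter, and I expect this to be the main obstacle. The strategy, following Coudene's approach as the author signals, is to use a closing/shadowing argument combined with the density of the length spectrum. Given a regular target $u$, I would first use transitivity (Proposition \ref{prop:toptrans}) to bring the forward orbit of a point on $W^{su}(v)$ near $u$; the issue is that shadowing alone lands me on $\phi^t u$ for some uncontrolled $t$, whereas I need to hit $u$ itself up to $\ep$. The density of $\{T_q\}_{q\in U\cap\mathcal P_{\text{hyp}}}$ in $\R$ provided by Lemma \ref{lem:noncomm} is precisely the tool that lets me absorb this time discrepancy: by choosing a periodic orbit through the relevant neighborhood whose period adjusts the flow-time by an amount dense in $\R$, I can correct the phase so that the shadowing point returns to $B(u,\ep)$ rather than to a translate $\phi^t B(u,\ep)$. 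Here is the rough mechanism I would implement: take a periodic point $p$ near $u$, use weak orbit gluing (Lemma \ref{lem:orbitglue}) to concatenate a long segment of $W^{su}(v)$ with many revolutions of $p$ and then a final segment landing near $u$; the strong-unstable part of the glued orbit remains in $W^{su}(v)$ because $W^{su}$ is flow-saturated into $W^{ou}$ and the gluing places the initial point in $W^{su}_\ep$ of a flowed copy of $\tilde v$; then vary the number of revolutions and invoke density of the periods to realize a return time that places the endpoint inside $B(u,\ep)$.

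Finally I would assemble these pieces. For a fixed regular $u$ and $\ep>0$, the construction above yields a point $w\in W^{su}(v)\cap B(u,\ep)$, so $u\in\cl{W^{su}(v)}$. Since regular vectors are dense in $SM$ (indeed $SM_{\text{reg}}$ is a dense $G_\delta$, as singular vectors are those whose forward or backward endpoint lies on a triangle boundary), and $\cl{W^{su}(v)}$ is closed and contains every regular vector, I conclude $\cl{W^{su}(v)}=SM$. The delicate point to handle with care is the ``phase correction'' step: I must verify that the adjustment supplied by Lemma \ref{lem:noncomm} can be made simultaneously with the $\ep$-shadowing constraints of Lemma \ref{lem:orbitglue}, i.e. that the gluing tolerances $\delta_i$ and the time-slack $|t_i'|$ do not swamp the period-density correction; controlling this interplay uniformly is the crux of the argument.
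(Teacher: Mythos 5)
Your overall strategy --- connect the unstable set of $v$ to a target point, then fix the ``phase'' using density of periods --- is the same one the paper uses, and you cite the right ingredients (Proposition \ref{prop:globalstables}, Lemma \ref{lem:noncomm}, Lemma \ref{lem:orbitglue}). But there is a genuine gap exactly at the step you yourself flag as the crux, and the mechanism you describe for it is misstated. First, Lemma \ref{lem:noncomm} does not give ``density of $\{T_q\}_{q\in U\cap\mathcal P_{\text{hyp}}}$'' as a set; it gives density of the \emph{subgroup} $\langle T_q\rangle$ these periods generate, and the distinction matters: a dense set of periods could consist entirely of rational multiples of $T_v$, which would be useless for your purposes. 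Second, a single periodic orbit $p$ near $u$ cannot have a ``period [that] adjusts the flow-time by an amount dense in $\R$'': varying the number of revolutions of $p$ changes the elapsed time only by elements of the discrete set $T_p\Z$. What is actually needed is the interaction of the \emph{two} periods. The endpoint of your glued orbit lies on $W^{su}(\phi^\tau v)$, where $\tau$ is the total elapsed time (up to the gluing slack); to force this leaf to be within $\ep$ of $W^{su}(v)$ itself while the endpoint stays near $u$, you must approximate a fixed offset by quantities of the form $nT_v-mT_p$, i.e.\ you need $\cl{\langle T_v,T_p\rangle}=\R$, equivalently $T_p/T_v\notin\Q$. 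Producing such a $p$ inside the given open set is precisely what must be extracted from Lemma \ref{lem:noncomm}, and your proposal never does this; it then also leaves unproven the compatibility of the gluing tolerances $\delta_i$, $|t_i'|$ with this correction, which you explicitly defer.

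It is worth seeing how the paper's proof makes both difficulties evaporate. It chooses a hyperbolic periodic $u\in\mathcal P_{\text{hyp}}\cap U$ with $\cl{\langle T_v,T_u\rangle}=\R$, and instead of orbit gluing it takes a single Bowen bracket $w\in W^{su}(v)\cap W^{ss}(\phi^T u)$, which exists \emph{exactly} (no $\ep$-errors) by Proposition \ref{prop:globalstables} since both orbits are regular. Then $\phi^{-nT_v}w$ lies exactly on $W^{su}(v)$ for every $n$ by periodicity of $v$, while $\phi^{mT_u-T}w\to u$ along $W^{ss}(u)$ by periodicity of $u$. The only approximation in the entire argument is matching the two times, $|-nT_v+mT_u-T|<\ep/2$, which is exactly where density of $\langle T_v,T_u\rangle$ enters; since $\phi^{-nT_v}w$ and $\phi^{mT_u-T}w$ lie on the same flow orbit and the flow has unit speed, closeness of times gives closeness of points, so $\phi^{-nT_v}w\in W^{su}(v)\cap B(u,\ep)$. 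If you replace your gluing construction by this bracket argument and supply the incommensurability step, your outline becomes the paper's proof; as written, the one-orbit version of the phase correction cannot work, and the error analysis needed to make the gluing version work is absent.
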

\begin{proof}
	Let $U\subset SM$  be open. 
	By Lemma \ref{lem:noncomm} there exists a $u\in\mathcal
	P_{\text{hyp}}\cap U$ such that $\cl{\langle T_v,T_u\rangle} = \R$. 
	Let $\ep>0$ such that $B(u,\ep)\subset U$. 
	Since $v,u\in SM_{\text{reg}}$ there exists a $T\in\R$ such that $w\in W^{su}(v)\cap W^{ss}(\phi^T(u))$. 
	Then $\phi^{-T}w\in W^{ss}(u)$ so choose $M\in \N$ large enough that for any $m\geq M$,
	$d(\phi^{mT_u-T}w,u)<\ep/2$. 
	Because $\cl{\langle T_v,T_u\rangle} = \R$, there are large enough $m,n\in\N$ with $m\geq M$ such that 
	\[
	-\ep/2 < |-nT_v+mT_u-T|<\ep/2
	\]
	implying that $d(\phi^{-nT_v}w,\phi^{mT_u-T}w)<\ep/2$ and hence $\phi^{-nT_v}w\in B(u,\ep)\subset U$.  
	Note that
	$\phi^{-nT_v}w\in\phi^{-nT_v}W^{su}(v)=W^{su}(v)$ to conclude the proof. 
\end{proof}

The following lemma, the final piece preceding the proof of topological mixing, is generally taken
as fact. 
We have included the proof for completeness. 

\begin{lemma} \label{lem:stablemflddensity_altdefn}
	Let $f^t\colon X\to X$ be any continuous flow of a compact metric space. 
For $p$ periodic, 
density of $W^{su}(p)$  implies that for all $\delta>0$ and for all $x\in X$, 
there is a $T(p,\delta, x)>0$ such that for all $t\geq T$, 
$f^t W^{su}_\delta(p) \cap B(x,\delta) \neq \emptyset$.
\end{lemma}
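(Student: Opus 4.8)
The plan is to exploit two elementary facts about the flow. First, strong unstable sets are carried to strong unstable sets: directly from the definition $W^{su}(v)=\{u\mid d(f^{-t}v,f^{-t}u)\to 0,\ t\to+\infty\}$ one checks $f^s W^{su}(p)=W^{su}(f^s p)$ for every $s$. Second, $p$ is periodic of period $T_p$, so $f^{-nT_p}p=p$ and hence $f^{-nT_p}W^{su}(p)=W^{su}(p)$; moreover for any $y\in W^{su}(p)$ we have $d(f^{-nT_p}y,p)=d(f^{-nT_p}y,f^{-nT_p}p)\to 0$, so $f^{-nT_p}y\in W^{su}_\delta(p)$ for all $n$ past some threshold. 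Consequently every point of the dense set $W^{su}(p)$ lies in $f^{nT_p}W^{su}_\delta(p)$ for all large $n$. Taking any $y\in W^{su}(p)\cap B(x,\delta)$ (dense) already gives the conclusion along the discrete sequence of times $t=nT_p$.

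The real content is to upgrade this to all $t\geq T$. First I would fix $x$ and $\delta$ and use that $(s,z)\mapsto f^s z$ is uniformly continuous on the compact set $[0,T_p]\times X$: choose $\rho>0$ so that $d(z,z')<\rho$ forces $d(f^s z,f^s z')<\delta$ for all $s\in[0,T_p]$. This is what will let me control the fractional part of the time $t$.

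Next I would pass to the compact arc $A=\{f^{-s}x : s\in[0,T_p]\}$, cover it by finitely many balls $B(f^{-s_i}x,\rho/2)$, $i=1,\dots,k$, and for each $i$ use density of $W^{su}(p)$ to pick $y_i\in W^{su}(p)\cap B(f^{-s_i}x,\rho/2)$. Each $y_i$ reenters $W^{su}_\delta(p)$ under $f^{-nT_p}$ for all $n\geq N_i$; I set $T=NT_p$ with $N=\max_i N_i$. Given $t\geq T$, write $t=nT_p+s$ with $n\geq N$ and $s\in[0,T_p)$, locate $f^{-s}x$ in some $B(f^{-s_i}x,\rho/2)$ so that $d(y_i,f^{-s}x)<\rho$, and apply uniform continuity to get $d(f^s y_i,x)<\delta$. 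Since $f^s y_i=f^t(f^{-nT_p}y_i)$ with $f^{-nT_p}y_i\in W^{su}_\delta(p)$, this exhibits a point of $f^t W^{su}_\delta(p)$ inside $B(x,\delta)$, as required.

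The only delicate point is the uniformity over $t$: the threshold $N_i$ after which $y_i$ returns to $W^{su}_\delta(p)$ depends on $y_i$, hence a priori on the base point $f^{-s_i}x$, and a single $y$ cannot handle every fractional offset $s$. The finite subcover of the compact arc $A$, combined with the uniform continuity of the time-$s$ maps for $s\in[0,T_p]$, is exactly what collapses this to finitely many thresholds whose maximum yields a single $T$. I expect this compactness-plus-equicontinuity bookkeeping to be the main (though routine) obstacle; the genuinely dynamical input — invariance of $W^{su}$ and backward contraction toward $p$ along multiples of the period — is immediate from the definitions and requires no hyperbolicity.
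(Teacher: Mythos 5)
Your proof is correct, and its dynamical core coincides with the paper's: invariance $f^{-nT_p}W^{su}(p)=W^{su}(p)$ plus backward convergence $d(f^{-nT_p}y,p)\to 0$ puts any $y\in W^{su}(p)$ into $f^{nT_p}W^{su}_\delta(p)$ for large $n$, and a finite cover then upgrades the discrete sequence of times $nT_p$ to all $t\geq T$. Where you diverge is in how the fractional part of $t$ is absorbed. The paper takes a $\delta/2$-net $\{t_i\}$ of the time interval $[0,T_p]$, runs the basic argument at each periodic point $f^{t_i}p$ on the orbit of $p$ to get $z_i\in W^{su}(f^{t_i}p)\cap B(x,\delta/2)$, and finishes by flowing $z_i$ a residual time $\ep\leq\delta/2$; this implicitly uses two facts that are not automatic for an arbitrary continuous flow, namely $W^{su}_\delta(f^{t_i}p)\subset f^{t_i}\bigl(W^{su}_\delta(p)\bigr)$ and $d(f^\ep z_i,z_i)<\delta/2$ for $\ep\leq\delta/2$ (both are harmless for a geodesic flow, where time-$\ep$ displacement is at most $\ep$ and the relevant maps behave well, but they are genuine assumptions at the stated level of generality). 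You instead cover the backward arc $\{f^{-s}x: s\in[0,T_p]\}$ in \emph{space} by $\rho/2$-balls, with $\rho$ supplied by uniform continuity of $(s,z)\mapsto f^s z$ on the compact set $[0,T_p]\times X$, work with the single periodic point $p$ throughout, and absorb the offset by flowing the approximating point $y_i$ the full fractional time $s$. This buys you a proof that is valid verbatim for any continuous flow on a compact metric space, exactly as the lemma is stated, at the cost of slightly heavier bookkeeping (the net must be chosen in the phase space rather than in the time parameter). In short: same strategy, different and somewhat more robust implementation of the covering step.
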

\begin{proof}
	By assumption, there exists some $z\in W^{su}(p) \cap B(x,\delta/2) $. Then
	$d(f^{-t}z,f^{-t}p)\to 0$ as $t\to+\infty$ so there exists an $S>0$ such that $s \geq
	S$ implies $ d(f^{-s}p, f^{-s}z)<\delta$. 
	For all $n\in \N$ such that $nT_p \geq S$,
	then $d(p,f^{-nT_p}z) = d(f^{-nT_p}p, f^{-nT_p}z) < \delta$, and $f^{-nT_p}z \in
	f^{-nT_p}W^{su}(p) = W^{su}(p)$.
	Hence $f^{-nT_p}(z) \in W^{su}_{\delta}(p)$ and $z\in f^{nT_p}(W_{\delta}^{su}(p))\cap
	B(x,{\delta/2}) \neq \emptyset$. 

	Take a finite $\delta/2$-cover $\{t_1,\ldots, t_k\}$ of $[0, T_p]$. 
	Repeat for each periodic point $f^{t_i}p$ of period $T_p$ the above argument to produce a
	$z_i\in W^{su}(f^{t_i}p)\cap B(x,\delta/2)$ and minimum $n_i\in \N$ such that if $n\geq n_i
	$, then   
	$z_i\in f^{nT_p}(W_{\delta}^{su}(f^{t_i}p))\cap B(x,{\delta/2}) \subset
	f^{nT_p+t_i}(W_{\delta}^{su}(p))\cap B(x,\delta/2) \neq \emptyset$. 
	Let $N = \max_{1\leq i \leq k } n_i$ and $T = (N+1)T_p$. Then for all $t\geq T$, there is
	some $M_t\geq N+1$, $i\in\{1,\ldots,k\}$, and $0\leq \ep\leq\delta/2$ such that $t= M_t T_p
	+ t_i + \ep$ and thus 
	\[ 
		z_i\in f^{M_tT_p + t_i} (W^{su}_{\delta} (p) )\cap B(x,{\delta/2}) 
	\]
	so $f^{\ep}z_i \in f^t(W^{su}_{\delta} (p)) \cap B(x,{\delta}) \neq \emptyset$ as desired. 
\end{proof}

We are now ready to prove the main theorem of the section. 

\begin{theorem} 
	The geodesic flow on $SM$ is topologically mixing. 
\label{thm:topologicalmixing}
\end{theorem}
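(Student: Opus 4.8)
The plan is to prove topological mixing by combining density of strong unstable leaves of hyperbolic periodic orbits (Proposition~\ref{prop:denseunstables}) with the orbit-gluing machinery, following the strategy of Coudene but compensating for the absence of local product structure. Given open sets $U, V \subset SM$, the goal is to find $T_0 > 0$ so that $\phi^T(U) \cap V \neq \varnothing$ for all $T \geq T_0$. The idea is to locate a hyperbolic periodic point $u \in U$ whose strong unstable leaf $W^{su}(u)$ is dense, and then use Lemma~\ref{lem:stablemflddensity_altdefn} to guarantee that for all sufficiently large times the forward flow of a small piece $W^{su}_\delta(u)$ meets $V$. The trick is that $W^{su}_\delta(u)$ can be transported back into $U$ by the periodicity of $u$, so the intersection with $V$ at a large time is actually witnessed by a point whose orbit passed through $U$.

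\textbf{Key steps in order.}
First I would fix open $U, V \subset SM$ and, without loss of generality, shrink them to balls; since $SM_{\text{flat}}$ is closed and nowhere dense (the flats have positive codimension), I may assume $U, V$ meet $SM_{\text{reg}}$. By Lemma~\ref{lem:densepps} hyperbolic periodic points are dense, so choose a hyperbolic periodic point $p \in U$ with period $T_p$, and choose $\ep > 0$ small enough that $B(p, \ep) \subset U$. By Proposition~\ref{prop:denseunstables}, $W^{su}(p)$ is dense in $SM$. Second, I would apply Lemma~\ref{lem:stablemflddensity_altdefn} with $x$ any center of a small ball inside $V$ and with $\delta < \ep$ small enough that $B(x, \delta) \subset V$: this yields a threshold $T(p, \delta, x) > 0$ such that for all $t \geq T$,
\[
    \phi^t\big(W^{su}_\delta(p)\big) \cap B(x, \delta) \neq \varnothing.
\]
Third, since $W^{su}_\delta(p) \subset B(p, \ep) \subset U$ (shrinking $\delta$ so that the local unstable disc stays inside $U$), every such nonempty intersection exhibits a point of $U$ flowing into $V$ at time $t$. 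Hence $\phi^t(U) \cap V \neq \varnothing$ for all $t \geq T$, which is exactly topological mixing.

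\textbf{The main obstacle} I expect is not in the final assembly but in ensuring that the local strong unstable disc $W^{su}_\delta(p)$ genuinely sits inside $U$ and that the ``for all large $t$'' uniformity survives. The subtle point is that Lemma~\ref{lem:stablemflddensity_altdefn} already packages the delicate work of covering the orbit of $p$ by finitely many time-translates $\phi^{t_i} p$ and producing a uniform threshold past which every time $t$ is realized; the proof of mixing must simply verify that its hypotheses hold, namely periodicity of $p$ and density of $W^{su}(p)$, both of which are in hand. The only care needed is coordinating the three scales: choose $\ep$ first so $B(p, \ep) \subset U$, then $\delta \leq \ep$ so that $W^{su}_\delta(p) \subset U$ and $B(x, \delta) \subset V$ simultaneously, and finally invoke the lemma to extract the uniform $T$. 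Because the lemma's conclusion is stated for \emph{all} $t \geq T$ rather than merely some $t$, no further gluing is required at this stage, and the strict monotone contraction of distances along strong unstable leaves in backward time (noted after Proposition~\ref{prop:globalstables}) is what makes the disc $W^{su}_\delta(p)$ flow coherently. This reduces topological mixing cleanly to density of unstable leaves, which was the genuinely hard input established earlier.
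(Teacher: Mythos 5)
Your proposal is correct and follows essentially the same route as the paper: pick a hyperbolic periodic point $p\in U$ (via Lemma~\ref{lem:densepps}), use density of $W^{su}(p)$ from Proposition~\ref{prop:denseunstables} together with Lemma~\ref{lem:stablemflddensity_altdefn} to get a uniform threshold $T$, and observe that $W^{su}_\delta(p)\subset U$ so that $\phi^t W^{su}_\delta(p)\cap B(x,\delta)\subset \phi^t U\cap V$ is nonempty for all $t\geq T$. The paper's proof is exactly this assembly, with the same coordination of scales $\delta$ for both $U$ and $V$.
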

\begin{proof}
  Let $U,V$ be open in $SM$ and $p\in U$ a hyperbolic periodic point. 
	Let $\delta>0 $ be small enough that $W^{su}_\delta(p)\subset B(p,\delta)\subset U$ and
	$B(x,\delta)\subset V$ for some $x\in V$. 
	By Lemma \ref{lem:stablemflddensity_altdefn}, a consequence of Proposition
	\ref{prop:denseunstables}, there is a $T(U,V)>0$ such that for all $t\geq T$, 
	\[
		\varnothing \neq \phi^t W^{su}_\delta(p) \cap B(x,\delta) \subset \phi^t U \cap V.
	\]
\end{proof}

\section{Entropy-expansiveness of Hilbert geometries}
\label{sec:hexp}

	In this section, we prove entropy-expansiveness for the geodesic flow of any compact Hilbert
	geometry. 
	First, we review some preliminary notions from entropy theory. 
	Given any metric space admitting a flow, one can define the \emph{Bowen distance} by
	\[
		d_t(v,u) := \max_{0\leq s\leq t} d(\phi^sv, \phi^su).
	\]
 	Then $d_t$ is a metric on $SM$, nondecreasing in $t$. Metric $d_t$-balls are called
	\emph{Bowen balls}, denoted $B_t(v,\delta)$.
	A \emph{$(t,\delta)$-spanning set} for $K\subset SM$ is one which is $\delta$-dense in $K$
	for the $d_t$ metric. For any compact $K\subseteq SM$, we can choose a minimal finite
	$(t,\delta)$-spanning set and denote the cardinality by $S(t,\delta, K)$. Then we define the
	\emph{topological entropy} of $\phi^t$ on $K$ by 
 \[
	 h_{top} (\phi, K) := \lim_{\delta\to 0} \lim_{t\to\infty}\frac1t \log S(t,\delta, K).
 \]
 
 There are many equivalent definitions of $h_{top}$ \cite
 {MDS}, and we include one other here. For
 $K\subseteq SM$ compact, we define a $(t,\delta)$-separated set $R\subset K$ such that for all
 $u,v\in R$, $d_t(v,u)\geq \delta$. Let $R(t,\delta,K)$ denote the maximal cardinality for
 $(t,\delta)$-separated sets, which is again finite by compactness of $K$. Then 
 \[
	 h_{top} (\phi, K) = \lim_{\delta\to 0} \lim_{t\to\infty}\frac1t \log R(t,\delta, K).
 \] 
 
 When $K=SM$, we abbreviate $S(t,\ep):=S(t,\ep,SM)$, $R(t,\ep):=R(t,\ep,SM)$, and
 $h_{top}(\phi):=h_{top}(\phi,SM)$. 

 For the purposes of applying Bowen's work, we take 
	\[
		h_{top}(\phi, K, \delta) := \lim_{t\to\infty}\frac1t \log S(t,\delta,K).
	\] 
	so that $h_{top}(\phi,K)=\displaystyle\lim_{\delta\to 0} h_{top}(\phi,K,\delta)$, and for
	$K=SM$ we have $h_{top}(\phi) := \displaystyle\lim_{\delta\to0} h_{top}(\phi,\delta)$. 

	For any point $v$ in $SM$, we define the \emph{infinite Bowen balls} about $v$ in positive
	or negative time:
\[
	\Phi_\ep(v) := \bigcap_{t\in \R^+} \phi^{-t} \cl{B( \phi^t v,\ep)} =  \{ y\in M \mid
		d(\phi^ty, \phi^tv)\leq \ep \text{ for all } t\in\R^+\}.
\]
Intuitively, we should think of the $\Phi_\ep(v)$ as the exceptions to expansivity. 
An expansive map (not flow) is defined by the existence of an $\ep>0$ such that $\Phi_\ep(v) =
\{v\}$ for all $v$. An expansive flow would satisfy that $\Phi_\ep(v)=W^{os}_\ep(v)$ for all $v$. 
There are special cases of entropy expansive systems. 
Define 
\[ 
	h^\ast(\ep) := \sup_{v\in SM} h_{top}(\phi,\Phi_\ep(v)).
\]
Then $\phi$ is \emph{$h$-expansive} with expansivity constant $\ep>0$ if $h^\ast(\ep)=0$.
In other words, there is an $\ep>0$ such that the exceptions to $\ep$-expansivity have no influence
on the entropy of the system.
For an $h$-expansive system, Bowen proved that we can bypass the cumbersome limit over $\delta\to0$
of $h_{top}(\phi,\delta)$ to compute $h_{top}(\phi)$. 

	\begin{theorem}[{\cite[Theorem 2.4]{bowen72}}]
		\label{thm:bowenhexp}
		If $\ep$ is an $h$-expansive constant for $\phi$, then 
		\[ 
			h_{top}(\phi)=h_{top}(\phi,\ep).
		\]
	\end{theorem}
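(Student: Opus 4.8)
The plan is to establish the two inequalities between $h_{top}(\phi)$ and $h_{top}(\phi,\ep)$ separately. The direction $h_{top}(\phi)\geq h_{top}(\phi,\ep)$ is immediate: a $(t,\delta)$-spanning set is a fortiori $(t,\delta')$-spanning whenever $\delta'\geq\delta$, so $S(t,\delta,SM)$ is non-increasing in $\delta$, whence $\delta\mapsto h_{top}(\phi,\delta)$ is non-increasing and $h_{top}(\phi)=\lim_{\delta\to0}h_{top}(\phi,\delta)=\sup_{\delta>0}h_{top}(\phi,\delta)\geq h_{top}(\phi,\ep)$. All the work is therefore in proving, for every $0<\delta<\ep$, the reverse bound $h_{top}(\phi,\delta)\leq h_{top}(\phi,\ep)$, and the hypothesis $h^\ast(\ep)=0$ must be the mechanism that supplies it.

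First I would prove a factorization estimate comparing the two scales. Fix $t>0$ and take a minimal $(t,\ep)$-spanning set $E$, so $|E|=S(t,\ep,SM)$ and the closed Bowen balls $\{\cl{B_t(x,\ep)}\}_{x\in E}$ cover $SM$. Writing $N_\delta(t,x)$ for the minimal cardinality of a $(t,\delta)$-spanning set of $\cl{B_t(x,\ep)}$, the union over $x\in E$ of such local spanning sets is $(t,\delta)$-spanning for $SM$, giving
\[
	S(t,\delta,SM)\leq S(t,\ep,SM)\cdot\sup_{x\in SM}N_\delta(t,x).
\]
Applying $\tfrac1t\log(\cdot)$ and letting $t\to\infty$ reduces the theorem to showing that $\limsup_{t\to\infty}\tfrac1t\log\sup_{x\in SM}N_\delta(t,x)\leq h^\ast(\ep)=0$.

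The hard part is this last estimate, which I would prove by contradiction using compactness of $SM$. Suppose the $\limsup$ equals some $c>0$; then there are times $t_k\to\infty$ and centers $x_k$ with $N_\delta(t_k,x_k)\geq e^{(c-o(1))t_k}$, equivalently $\cl{B_{t_k}(x_k,\ep)}$ carries a $(t_k,\delta)$-separated set of that size. Passing to a subsequence with $x_k\to x$, the key geometric point is that any accumulation vector $y$ of a sequence $y_k\in\cl{B_{t_k}(x_k,\ep)}$ satisfies $d(\phi^sy,\phi^sx)\leq\ep$ for every fixed $s\geq0$ (use continuity of the flow on $[0,s]$ once $t_k>s$), so that $y\in\Phi_\ep(x)$. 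The obstacle is upgrading this pointwise accumulation into an entropy statement: one must run a diagonal argument over longer and longer windows $[0,T]$ to extract, inside $\Phi_\ep(x)$, $(T,\delta)$-separated sets whose exponential growth rate in $T$ is at least $c$, forcing $h_{top}(\phi,\Phi_\ep(x))\geq c>0$ and contradicting $h^\ast(\ep)=0$. Controlling the interplay between the truncation time $t_k$ and the window length $T$, while keeping the limiting points genuinely $\delta$-separated, is the delicate step; if a slight thickening of the $\ep$-constant is forced, it can be absorbed by the same monotonicity used for the easy inequality.

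Finally, combining the factorization bound with $\limsup_{t\to\infty}\tfrac1t\log\sup_{x}N_\delta(t,x)=0$ yields $h_{top}(\phi,\delta)\leq h_{top}(\phi,\ep)$ for all $\delta<\ep$, and letting $\delta\to0$ gives $h_{top}(\phi)\leq h_{top}(\phi,\ep)$, completing the argument. Since the infinite Bowen balls $\Phi_\ep(v)$ and the spanning and separated sets are all defined over positive time $\R^+$, the reasoning is insensitive to the flow (rather than map) setting and requires no reparametrization bookkeeping.
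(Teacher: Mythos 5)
Two remarks before the substance: the paper itself gives no proof of this statement---it is quoted directly from Bowen---so the comparison below is with Bowen's own argument. Your skeleton is the right one and matches his: the monotonicity inequality $h_{top}(\phi)\geq h_{top}(\phi,\ep)$ is immediate, the factorization $S(t,\delta,SM)\leq S(t,\ep,SM)\cdot\sup_x N_\delta(t,x)$ is exactly Bowen's first reduction, and everything indeed hinges on showing $\limsup_t \tfrac1t\log\sup_x N_\delta(t,x)\leq h^\ast(\ep)$.

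The gap is in your proof of that last estimate, precisely at the step you flag as ``delicate''; as proposed it does not work, and the obstruction is structural rather than technical. From $(t_k,\delta)$-separated sets $F_k\subset\cl{B_{t_k}(x_k,\ep)}$ with $x_k\to x$, individual accumulation points do lie in $\Phi_\ep(x)$, but to produce entropy in $\Phi_\ep(x)$ you need many points of $F_k$ whose pairwise separation occurs \emph{inside a fixed window} $[0,T]$, and nothing forces this. Consider the worst case: every point of $F_k$ agrees with the orbit of $x_k$ to within $o(1)$ on $[0,t_k/2]$ and all the separation happens in $[t_k/2,t_k]$. Then any collection of limit points you extract is contained in a single small infinite Bowen ball---it carries zero entropy---and no contradiction appears. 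The natural repair, pigeonholing over time-blocks of length $T$ to find a block with complexity $e^{(c-o(1))T}$, only yields a \emph{recentered} point $z_k=\phi^{j_kT}x_k$ whose \emph{finite} Bowen ball $\cl{B_T(z_k,\ep)}$ has high complexity, with forward shadowing time possibly as short as $T$ (the high-complexity block may be the last one); that is just the negation hypothesis again at a new center, so the argument is circular. Bowen's proof avoids taking limits of separated sets altogether and pushes information in the opposite direction: since $\cl{B_t(x,\ep)}\searrow\Phi_\ep(x)$ as $t\to\infty$, any $(m,\delta)$-spanning set $E_x$ of $\Phi_\ep(x)$ with $|E_x|\leq e^{(h^\ast+\gamma)m}$ has the property that the open set $U_x=\bigcup_{y\in E_x}B_m(y,2\delta)$ contains $\cl{B_{t(x)}(x,\ep)}$ for some finite $t(x)$; the set of centers $z$ with $\cl{B_{t(x)}(z,\ep)}\subset U_x$ is open, so compactness of $SM$ gives finitely many such charts; and then a concatenation count over time blocks---in which the itinerary of charts is determined by the center $z$ alone, and only the anchor points in the $E_{x_i}$ depend on the shadowing point---gives $N_{4\delta}(t,z)\leq Ce^{(h^\ast+\gamma)t}$ uniformly in $z$. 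This uniformization-plus-itinerary step is the actual content of Bowen's theorem, and it is the piece your proposal is missing.
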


	Moreover, to compute the metric entropy of a system, one can simply take a
	sufficiently fine measurable partition rather than an infimum over all possible partitions.
	An immediate consequence is existence of a measure of maximal entropy
	(see \cite[Theorem 8.6 (2)]{Wa82}).

	For any manifold, the \emph{injectivity radius} of $x\in M$ is defined to be 
	\[
	\inj(x) := \frac12 \inf_{\ell}\{\text{length}(\ell)\},
	\]
	where $\ell$ varies over all homotopically nontrivial loops through $x$. Then define the
	\emph{injectivity radius of} $M$ to be
	\[
		\inj(M) := \inf_{x\in M}\inj(x).
	\]
	If $M$ is compact then $\inj(M)>0$.

	\begin{theorem} \label{thm:hexp}
	The geodesic flow $\phi^t$ on any compact Hilbert geometry is $h$-expansive.
	\end{theorem}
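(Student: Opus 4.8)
The plan is to show that whenever $\ep$ is smaller than the injectivity radius, every infinite Bowen ball $\Phi_\ep(v)$ supports a flow of zero topological entropy, so that $h^\ast(\ep)=0$ and $\ep$ is an $h$-expansive constant. The mechanism I want to exploit is that points remaining within $\ep$ of the orbit of $v$ in \emph{forward} time cannot separate in forward time: if the adapted-metric distance between two such orbits is non-increasing, then the Bowen metric $d_t$ restricted to $\Phi_\ep(v)$ coincides with the ambient metric $d$ for every $t$, which collapses the entropy. First I would fix $\ep<\inj(M)$ (recall $\inj(M)>0$ by compactness) and record the containment $\Phi_\ep(v)\subseteq W^{os}(v)$, which is immediate from the definitions, since a point staying within $\ep$ of the orbit of $v$ in positive time in particular stays at bounded distance. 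The hypothesis $\ep<\inj(M)$ is what lets me pass to the universal cover: for $y\in\Phi_\ep(v)$ I choose lifts $\tilde v,\tilde y\in S\Om$ so that $d(\phi^t\tilde y,\phi^t\tilde v)\leq\ep$ for all $t\geq0$, the lift being forced and consistent because an $\ep$-ball is evenly covered.

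The key geometric input is that the adapted-metric distance between two orbits which remain at bounded distance in forward time is a non-increasing function of $t$. For regular vectors this is exactly the monotonicity recorded after Proposition \ref{prop:globalstables} (cf. \cite{Cr14}): two forward-asymptotic geodesics share the forward endpoint $v^+$, hence are coplanar, and the Hilbert distance between points on a common $2$-plane $P$ is computed inside $\Om\cap P$, reducing the claim to a two-dimensional Hilbert geometry where the cross-ratio estimates give convexity of $t\mapsto d(\phi^t\tilde y,\phi^t\tilde v)$; a bounded convex function on $[0,\infty)$ is automatically non-increasing. For $y,z\in\Phi_\ep(v)$ the triangle inequality gives $d(\phi^t y,\phi^t z)\leq 2\ep$, so the same conclusion transfers to the pair $(y,z)$, and the distance $t\mapsto d(\phi^ty,\phi^tz)$ is non-increasing. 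The main obstacle lies precisely here: I must phrase everything for the adapted Finsler metric on $SM$ rather than the base metric on $M$, and I must cover the \emph{singular and flat} directions tangent to properly embedded triangles, where the strong stable foliation of Proposition \ref{prop:globalstables} is unavailable. On a flat the geodesics are parallel lines in a hexagonal-norm plane and the transverse distance is constant, hence trivially non-increasing, but making this uniform and checking that no expansion whatsoever occurs inside $\Phi_\ep(v)$ is the delicate point to be argued directly.

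Granting the non-increase, the proof concludes quickly. For $y,z\in\Phi_\ep(v)$ the maximum in $d_t(y,z)=\max_{0\leq s\leq t}d(\phi^sy,\phi^sz)$ is attained at $s=0$, so $d_t(y,z)=d(y,z)$ for every $t$. Hence a $(t,\delta)$-separated subset of $\Phi_\ep(v)$ is just a $\delta$-separated subset in the ambient metric, and since $\Phi_\ep(v)\subseteq\cl{B(v,\ep)}$ is precompact its $\delta$-separated sets have cardinality bounded by some $N(\delta)<\infty$ independent of $t$. Therefore
\[
 h_{top}(\phi,\Phi_\ep(v))=\lim_{\delta\to0}\lim_{t\to\infty}\frac1t\log R(t,\delta,\Phi_\ep(v))\leq\lim_{\delta\to0}\lim_{t\to\infty}\frac1t\log N(\delta)=0 .
\]
Taking the supremum over $v\in SM$ yields $h^\ast(\ep)=0$, so $\ep$ is an $h$-expansive constant and $\phi$ is $h$-expansive. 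In short, the heart of the argument is the uniform non-expansion on weak stable sets in forward time, including along the flat singular directions, and everything else is the routine translation of ``$d_t=d$'' into a bound on separated sets that is constant in $t$.
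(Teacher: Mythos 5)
Your argument is sound only in the case the paper treats first, when $v^+$ is extremal: then every point of $\Phi_\ep(v)$ shares the forward endpoint $v^+$, the monotonicity of the adapted metric (which the paper records only for points of a common weak stable set, i.e.\ for pairs with the \emph{same} forward endpoint) applies, and the separated-set count collapses as you say. The gap is the non-extremal case, which you flag as ``the delicate point to be argued directly'' but never argue, and which is exactly where your key claim fails. If $v^+$ lies in the interior of a segment of $\bd\Om$, then $\Phi_\ep(v)$ contains vectors whose forward endpoints are \emph{distinct} points of the face $C$ of $v^+$; for such a pair the forward distance tends to (roughly) the Hilbert distance $d_C$ between the endpoints measured inside the face, and this asymptotic value can exceed the initial distance. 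Concretely, two unit vectors based at the same footpoint of a flat and aimed at two distinct interior points of the same edge have footpoint distance $0$ at time zero, while their footpoint distance tends to the positive face distance of the endpoints; so $t\mapsto d(\phi^ty,\phi^tz)$ is not non-increasing and your identity $d_t=d$ on $\Phi_\ep(v)$ is false. Your picture of flats is also inaccurate: in the norm-plane model of a triangle, projective lines are straight only when they pass through a vertex; lines ending at interior edge points are curves, they can cross one another, and after crossing they diverge toward a positive asymptotic distance, so ``parallel lines, constant transverse distance'' does not cover the pairs that matter.

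The paper does not claim monotonicity for these pairs; it uses a different mechanism in the non-extremal case. It shows that the forward endpoints of all lifted points of $\Phi_\ep(v)$ lie in a compact ball $\cl{B_C(\tilde v^+,\ep)}$ for the face's own Hilbert metric $d_C$, covers this ball by finitely many $d_C$-balls of radius $\delta/2$, and observes that within each resulting piece (vectors near a fixed $v_i$ whose endpoints are $d_C$-close to a fixed $\eta_i$) a $(0,\delta)$-spanning set remains $(t,\delta)$-spanning for all $t\geq 0$, since forward separation is controlled by the initial distance together with the face distance of the endpoints. Zero entropy then follows because the total spanning count, summed over the finitely many pieces, is independent of $t$; no identity $d_t=d$ is needed or true. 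To repair your proof you would have to replace the monotonicity claim by precisely this sort of quantitative control --- for instance Crampon's convexity lemma quoted in the last section, which bounds intermediate distances by initial plus asymptotic distances --- combined with a finite cover of the endpoint set in the face metric; at that point you would have reproduced the paper's argument rather than shortcut it.
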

	\begin{proof}
		Lift $v$ to $\tilde{v}$ in $S\Om$. If $\tilde{v}^+$ is extremal, then by properties
		of the Hilbert metric, 
		$\tilde{u}^+\neq \tilde{v}^+$ for any lift $\tilde u$  of $u$ implies $u\not\in \Phi_\ep(v)$ for
		$0<\ep<\inj(M)/3$. Then a $(0,\delta)$-spanning set for $\Phi_\ep(v)$ is a
		$(t,\delta)$-spanning set for all $t>0$ and $h_{top}(\Phi_\ep(v))=0$. 

		Suppose now that $v^+$ is not extremal. 
		Let $C\subset \bd\Om$ be the intersection of all supporting hyperplanes to $\Om$ at
		$\tilde v^+$. Note that if $\tilde v^+$ is not extremal then $C$ has nonempty
		interior for the subspace topology in the minimal projective subspace containing $C$. Since
		$C$ is properly convex in this projective subspace, 
		we can extend the Hilbert metric to the interior of $C$, which we will denote
		$d_C$, with metric balls denoted by $B_C$. 
		Now define
		\[
			\Phi_C^+(\tilde v,\ep) := \{u\in \cl{B(\tilde v,\ep)} \mid u^+\in \cl{B_C(\tilde v^+,\ep)} \}.
		\]
		Then $\Phi_\ep(v) $ is contained in the quotient projection of $\Phi_C^+(\tilde v,\ep)$.
		For all $\eta\in B_C(\tilde v^+,\ep)$ let $v_\eta$ be such that $\pi v_\eta =\pi
		\tilde v$
		and $v_\eta^+=\eta$. 
		Then $d(v_\eta,v)\leq d_C(\eta,\tilde v^+)\leq \ep$. 
		Then for all $w\in \Phi_C^+(\tilde v,\ep)$, there is an $\eta=w^+$ implying
		$d(w,v_\eta)\leq d(w,\tilde v)+d(\tilde v,v_\eta)=\ep+\ep=2\ep$, hence
		\[
			\Phi_C^+(\tilde v,\ep)\subset \bigcup_{\eta\in\cl{B_C(\tilde v^+,\ep)}} \Phi^+(v_\eta,2\ep).
		\]
		Choose a finite $\delta/2$-cover of $B_C(\tilde v^+,\ep/2)$ by $\{\eta_i\}_{i=1}^k$
		and $v_i:= v_{\eta_i}$. 
		Then for all $u\in \Phi^+(v_\eta,2\ep)$, there is an $\eta_i$ such that
		$d_C(\eta,\eta_i)<\delta/2$ and 
		$d(u,v_i)\leq d(u,v_\eta)+d(v_\eta,v_i)<2\ep+d_C(\eta,\eta_i)<\frac{5\ep}2 $
		for $\delta$ small.
		Describe all such $u$ by
		\[
			\Phi_C^+\left( v_i,{5\ep}/2,\delta/2 \right) := \{u\in
				\cl{B(\tilde v,5\ep/2)} \mid u^+\in \cl{B_C(\tilde v^+,\delta/2)} \}.
		\]
		Then for $\ep<\inj(M)/3$, 
		\[
			\Phi_\ep(v)\subset \bigcup_{i=1}^k \Phi_C^+\left(
			d\pi_\Gam v_i,{5\ep}/2,\delta/2 \right) 
		\]
		Note that for each compact $\Phi_C^+(d\pi_\Gam v_i,{5\ep}/{2},\delta/2)$, a minimal
		$(0,\delta)$-spanning set $E_i$ will be a $(t,\delta)$-spanning set for all
		$t\geq 0$. Thus,
		\[
			h_{top}(\Phi_\ep(v),\delta) \leq \lim_{t\to\infty}\frac1t \log
			\left( \sum_{i=1}^k |E_i| \right) =0.
		\]
	\end{proof}

	\section{Applications to counting and entropy}
	\label{sec:counting}

	Let $P_t(\phi)$ denote the collection of \emph{isolated} $\phi$-periodic
	orbits of period at most $t $, modulo orbit equivalence, and 
	\[
		\rho(\phi): = \lim_{t\to\infty} \frac1t  \log \# P_t(\phi).
	\]
	
	The effect of defining $P_t$ by isolated orbits is that we neglect the flat orbits in
	$SM_{\text{flat}}$. 
	Any periodic orbit on a flat corresponds to a continuous 1-parameter family
	of periodic orbits of the same homotopy type, so these are not isolated and
	not counted. 
	By contrast, the hyperbolic periodic orbits are isolated and countable. 

	The next proposition follows from $h$-expansivity (Theorem
	\ref{thm:hexp}). 
	
	\begin{prop}
		\label{prop:entropyineq}
	The geodesic flow $\phi^t$ of a Benoist 3-manifold satisfies  
		\[\rho (\phi) \leq h_{top}(\phi).\]
	\end{prop}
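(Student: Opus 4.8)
The plan is to bound the number of isolated periodic orbits of period at most $t$ by the cardinality of a maximal $(t,\delta)$-separated set, and to use $h$-expansivity only to discard an exponentially negligible cluster of orbits that shadow one another forever. Recall first that by Proposition \ref{prop:nscisoms} the isolated periodic orbits are exactly the hyperbolic ones: their lifts are preserved by hyperbolic group elements and they are regular, whereas every flat orbit lies on a torus or Klein bottle and moves in a continuous family, so it is excluded from $P_t(\phi)$. I would fix $\ep\in(0,\inj(M)/3)$, which is an $h$-expansivity constant by the proof of Theorem \ref{thm:hexp}, and choose $\delta\in(0,\ep)$.

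Next, for each orbit in $P_t(\phi)$ I pick one representative point, and I show that, after removing a subexponentially large subfamily, the chosen representatives form a $(t,\delta)$-separated set (up to the time shift along the flow direction inherent to flows). The mechanism is that two \emph{distinct} periodic orbits whose representatives $\delta$-shadow one another for all of $[0,t]$ must, by periodicity, shadow one another for all forward time as well: reducing times modulo the two periods upgrades finite-time shadowing to the infinite-time condition defining $\Phi_\ep$. Consequently, every representative that fails to be separated from a fixed $v$ lies in the single infinite Bowen ball $\Phi_\ep(v)$.

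By $h$-expansivity we have $h_{top}(\phi,\Phi_\ep(v))=0$, and since this entropy is monotone in $\delta$ and nonnegative, $S(t,\delta,\Phi_\ep(v))=e^{o(t)}$ for our fixed $\delta$; hence each such cluster contributes at most $e^{o(t)}$ orbits. The number of clusters is at most $R(t,\ep)$, the cardinality of a maximal $(t,\ep)$-separated set. Combining these and using $R(t,\ep)\le S(t,\ep/2)$ gives
\[
 \rho(\phi) \;=\; \lim_{t\to\infty}\tfrac1t\log\#P_t(\phi) \;\le\; \limsup_{t\to\infty}\tfrac1t\log S(t,\ep/2) \;=\; h_{top}(\phi,\ep/2) \;\le\; h_{top}(\phi),
\]
the last inequality because $h_{top}(\phi,\delta)$ increases to $h_{top}(\phi)$ as $\delta\to0$; alternatively one may invoke Theorem \ref{thm:bowenhexp} to replace $h_{top}(\phi,\ep)$ by $h_{top}(\phi)$ outright.

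The main obstacle will be the flow-theoretic bookkeeping in the second step: unlike for a map, the statement that the representatives are $(t,\delta)$-separated must be formulated modulo reparametrization in the flow direction, and two orbits generally have different, possibly incommensurable, periods. Turning finite $[0,t]$-shadowing into the all-forward-time condition defining $\Phi_\ep(v)$ therefore requires choosing the representative and its time shift so that the shadowing constant stays below $\ep$ uniformly, and using periodicity to control the reduction modulo each period. Everything else -- the spanning/separated comparison, the uniformity over $v$ of the subexponential bound (guaranteed by $h^\ast(\ep)=0$), and the passage to exponential growth rates -- is standard.
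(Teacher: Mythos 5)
Your overall skeleton matches the paper's: fix $\ep<\inj(M)/3$, compare $\#P_t(\phi)$ with separated-set counts, use periodicity to upgrade finite-time shadowing to all-time shadowing, and finish with Bowen's Theorem \ref{thm:bowenhexp}. But your middle step --- allowing nontrivial ``clusters'' of isolated periodic orbits inside a common infinite Bowen ball $\Phi_\ep(v)$ and bounding each cluster's cardinality by $e^{o(t)}$ via $h$-expansivity --- has a genuine gap. Zero topological entropy of $\Phi_\ep(v)$ controls the cardinality of $(t,\delta)$-separated \emph{subsets} of $\Phi_\ep(v)$ at each fixed scale $\delta$; it says nothing about the number of distinct periodic orbits contained in $\Phi_\ep(v)$, because the members of one of your clusters are, by construction, exactly the orbits that \emph{fail} to be $(t,\delta)$-separated from one another. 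They may only be separated at scales shrinking with $t$, where entropy gives no control. This very setting shows the mechanism cannot work: a flat torus in a Benoist 3-manifold carries a continuous one-parameter family of periodic orbits of equal period, all lying in a single infinite Bowen ball, while that ball has zero topological entropy by Theorem \ref{thm:hexp}. So ``zero entropy of the cluster'' is compatible with uncountably many periodic orbits in the cluster, and excluding flats from $P_t(\phi)$ by definition does not rescue the argument, since your cardinality bound never uses anything that distinguishes isolated orbits from flat ones. (A secondary issue: $h^\ast(\ep)=0$ gives $\lim_{t\to\infty}\frac1t\log S(t,\delta,\Phi_\ep(v))=0$ for each $v$, but not uniformly in $v$, and you need this bound simultaneously over up to $R(t,\ep)$ clusters, a number that grows exponentially in $t$.)

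What is needed --- and what the paper actually proves --- is that among isolated orbits there are no nontrivial clusters at all: if $u,v\in P_t(\phi)$ satisfy $d_t(u,v)<\ep<\inj(M)/3$, then (after the upgrade to all-time shadowing, which the paper asserts just as you do) one lifts to the universal cover and invokes discreteness of $\Gam$: the group elements translating along the two lifted orbits are forced to coincide, so either $u=v$, or else both orbits lift into a single properly embedded triangle (Theorem \ref{thm:ben3mfldgeom}, Proposition \ref{prop:nscisoms}), in which case both are flat and hence not counted in $P_t(\phi)$. Thus $P_t(\phi)$ is itself a $(t,\ep)$-separated set, $\#P_t(\phi)\le R(t,\ep)$, and the conclusion follows. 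Your proposal is missing exactly this discreteness/injectivity-radius step, and the entropy-theoretic substitute you propose for it does not close the gap.
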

	\begin{proof}
		Choose $\ep\leq \inj M/3$ an $h$-expansivity constant for the geodesic flow on $SM$. 
		We show that $P_t$ is a $(t,\ep)$-separated set. 
		If $v,u\in P_T$ such that $d_T(v,u)<\ep$, then $d_t(v,u) <\ep$ for all $t\in \R$. 
		Since $\Gam$ acts discretely and $\ep<\inj(M)/3$, this is only possible if $v=u$ or
		if $v$ and $u$ lift to tangent vectors to a properly embedded triangle $\tri$ such
		that $\ell_{\tilde{v}},\ell_{\tilde{u}}\subset\tri$. Then $v,u$ are in a flat 
		so they are
		not counted in $P_T$.
	
		Thus, $P_t$ is $(t,\ep)$-separated and has cardinality at most 
		$R(t,\ep)$, the cardinality of a maximal $(t,\ep)$-separated set. We conclude by
		$h$-expansivity and Bowen's Theorem \ref{thm:bowenhexp} that 
		\[
			\rho(\phi) = \lim_{t\to\infty} \frac1t  \log \# P_t(\phi) \leq
			\lim_{t\to\infty}\frac1t \log R(t,\ep) = h_{top}(\phi).
		\]
	\end{proof}
	
	\begin{prop}
	The geodesic flow of a compact Benoist 3-manifold has positive topological entropy. 
	\label{prop:positiveentropy}
	\end{prop}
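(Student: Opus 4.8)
The plan is to chain together the two preceding results: Proposition \ref{prop:entropyineq}, which gives $\rho(\phi)\leq h_{top}(\phi)$, and Proposition \ref{prop:densespec}, which gives density of $\mathcal L(\Gam_{\text{hyp}})$ in $\R$. Since positivity of $h_{top}(\phi)$ would follow from positivity of $\rho(\phi)$, it suffices to exhibit an exponentially growing supply of distinct isolated (hyperbolic) periodic orbits. First I would observe that Corollary \ref{cor:exist_hyp_elts} provides two noncommuting hyperbolic elements $g,h\in\Gam_{\text{hyp}}$, and that a free subgroup of rank two inside $\Gam$ yields a large collection of conjugacy classes.

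More precisely, the approach is to count conjugacy classes of hyperbolic elements in a free subgroup. By the Tits alternative (or directly, since $g,h$ are noncommuting biproximal elements with distinct fixed points in $\bd\Om$), after passing to suitable powers $g^N, h^N$ one obtains a ping-pong pair generating a free group $F_2=\langle a,b\rangle<\Gam_{\text{hyp}}$. Each nontrivial cyclically reduced word in $F_2$ is a hyperbolic element (by Proposition \ref{prop:nscisoms}, since no element of $F_2$ stabilizes a triangle, these fixing proper extremal points and hence being hyperbolic), and distinct conjugacy classes of primitive cyclically reduced words give rise to distinct isolated closed hyperbolic orbits. The number of cyclically reduced words of length at most $n$ grows like $3^n$, and the translation lengths $\tau(w)$ grow at most linearly in word length (bounded by $n\cdot\max\{\tau(a),\tau(b)\}$ times a bounded-geometry constant, since $d_\Om$ is $\Gam$-equivariant and $\Gam$ acts cocompactly, so the orbit map is a quasi-isometry). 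Combining these, the number of conjugacy classes with $\tau\leq t$ grows exponentially in $t$, giving $\rho(\phi)>0$.

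I expect the main obstacle to be the bookkeeping that separates conjugacy classes cleanly and controls translation length from above. Two issues need care: distinct closed geodesics must correspond to genuinely distinct isolated periodic orbits (not flat ones), which is guaranteed because every element of $F_2$ is hyperbolic and hyperbolic orbits are isolated as noted in the discussion preceding Proposition \ref{prop:entropyineq}; and the linear upper bound $\tau(w)\lesssim |w|$ must hold, which follows from the \v{S}varc--Milnor lemma applied to the cocompact action of $\Gam$ on $(\Om,d_\Om)$. An alternative, perhaps cleaner, route avoids explicit counting altogether: Proposition \ref{prop:densespec} shows $\mathcal L(\Gam_{\text{hyp}})$ is dense in $\R$, so $\Gam_{\text{hyp}}$ contains elements of arbitrarily small nonzero translation length, forcing infinitely many closed hyperbolic orbits and, via the mixing and closing results already established, a nontrivial exponential growth rate; but the free-subgroup count is the most direct way to force $\rho(\phi)>0$ and I would carry that out.
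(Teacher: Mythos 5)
Your proposal is correct and takes essentially the same route as the paper: the paper's own proof simply invokes Corollary \ref{cor:exist_hyp_elts} to obtain a free subgroup generated by two noncommuting hyperbolic elements, asserts a positive exponential growth rate for the lengths of the closed curves associated to that subgroup, and concludes via $\rho(\phi)\leq h_{top}(\phi)$ from Proposition \ref{prop:entropyineq}. Your ping-pong construction of the free subgroup, the check that the resulting periodic orbits are hyperbolic rather than flat (hence isolated and counted in $P_t$), and the \v{S}varc--Milnor bound on translation lengths are precisely the details the paper leaves implicit.
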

	\begin{proof}
		By Corollary \ref{cor:exist_hyp_elts}, there exist noncommuting hyperbolic
		elements $g,h\in\Gam$ which generate a free subgroup. There
		is a positive lower bound for the 
		exponential growth rate of lengths of closed curves associated to this subgroup,
		which bounds below $\rho(\phi)$ and hence $h_{top}(\phi)$. 
	\end{proof}

\subsection{Volume entropy}

	We remark in this section that A. Manning's proof that volume entropy and topological
	entropy agree for compact nonpositively curved Riemannian manifolds   generalizes to our
	context 
	immediately \cite{manning}. 
	Let $\vol$ be a uniform volume on $\Om$, meaning the volumes of unit metric balls are
	uniformly
	bounded above and below by positive constants. There is no canonical such volume but there
	are good candidates (c.f. beginning of \cite[pp 207-261]{handbook}). 
	Then 
	\[
		h_{vol}(\Om) = \displaystyle\lim_{r\to\infty} \frac1r \log \vol(B_\Om(x,r))
	\]
	is the volume entropy of $\Om$. 
	Let $\delta_\Gam$ denote the critical exponent of the action of $\Gam$ on $\Om$, 
	equivalently: $\delta_\Gam = \limsup_{t\to\infty}\frac1t\log N_\Gam(t)$ where
	$N_\Gam(t)=\#\{\gam\in \Gam\mid d_\Om(x,\gam.x)\leq t\}$.

	\begin{prop} 
		If $\Om$ is any divisible properly convex domain in $\RP^n$,
		then 
		\[
			\delta_\Gam=h_{vol}=h_{top}(\phi).
		\]
		\label{prop:vol_entropy}
	\end{prop}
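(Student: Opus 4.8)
The plan is to establish the two equalities $\delta_\Gam = h_{vol}$ and $h_{vol} = h_{top}(\phi)$ separately, following the strategy of Manning \cite{manning} adapted to the Hilbert (Finsler) setting. Throughout, cocompactness of $\Gam$ is what converts geometric growth into dynamical growth.

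First I would prove $\delta_\Gam = h_{vol}$ by comparing orbit counting with volume growth. Fix a compact fundamental domain $D$ for the $\Gam$-action with $d_0 = \diam(D)$ and a basepoint $x \in D$. Since $\vol$ is a uniform volume, every translate $\gam D$ has volume in a fixed interval $[c',C']$ independent of $\gam$: each $\gam D$ sits inside a ball of radius $d_0$ and contains a fixed-radius ball, and unit-ball volumes are uniformly controlled. The translates $\{\gam D\}_{\gam\in\Gam}$ tile $\Om$, so counting those $\gam$ for which $\gam D$ meets $B_\Om(x,r)$ gives
\[
	c'\,N_\Gam(r-d_0) \;\leq\; \vol\big(B_\Om(x,r)\big) \;\leq\; C'\,N_\Gam(r+d_0).
\]
Applying $\tfrac1r\log(\cdot)$ and letting $r\to\infty$ yields $\delta_\Gam = h_{vol}$.

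Next I would prove $h_{vol}=h_{top}(\phi)$ via both Manning inequalities. For the lower bound $h_{top}(\phi)\geq h_{vol}$, which holds for any compact quotient, I would manufacture a separated set from the orbit: lift $x$ to $\tilde x\in\Om$, and for each $\gam$ with $d_\Om(\tilde x,\gam\tilde x)\leq t$ let $v_\gam\in S_{\tilde x}\Om$ be the initial direction of the projective segment from $\tilde x$ to $\gam\tilde x$, projected to $SM$. Torsion-freeness and proper discontinuity give a positive systole $\delta_0$, so distinct orbit points are $\delta_0$-apart; the corresponding geodesics diverge near their endpoints, and Manning's accounting of the bounded multiplicity under projection shows the $v_\gam$ form a $(t,\delta)$-separated set with $R(t,\delta)\gtrsim N_\Gam(t-c)$, hence $h_{top}(\phi)\geq\delta_\Gam=h_{vol}$. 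For the upper bound $h_{top}(\phi)\leq h_{vol}$, I would build a spanning set from a net on a ball in $\Om$. The essential input is a convexity property of the Hilbert metric along projective-line geodesics: for two such unit-speed geodesics $\sigma_1,\sigma_2$ the function $s\mapsto d_\Om(\sigma_1(s),\sigma_2(s))$ is convex. Given this, two geodesics with footpoints in a fixed fundamental domain whose time-$t$ endpoints are $\delta$-close must $\delta$-shadow each other on all of $[0,t]$, so the endpoint map $v\mapsto\ell_{\tilde v}(t)$ is uniformly finite-to-one on a maximal $(t,\delta)$-separated set. Covering $B_\Om(\tilde x,t+d_0)$ by $\delta$-balls then bounds $R(t,\delta)\lesssim \vol\big(B_\Om(\tilde x,t+d_0)\big)\big/\inf_y\vol\big(B_\Om(y,\delta)\big)$, whence $h_{top}(\phi)\leq h_{vol}$.

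The main obstacle will be the upper bound, specifically the shadowing estimate in the presence of flats and non-unique geodesics. Away from $SM_{\text{flat}}$ this is the standard nonpositive-curvature argument, but on and near the flats geodesics joining two points are not unique, so I must verify that convexity of $d_\Om$ along the chosen projective-line geodesics still forces close footpoints and close time-$t$ endpoints to imply uniform shadowing. Because the geodesic flow here is defined through projective lines and the Hilbert distance is convex along such lines, this should go through; the subtlety is that the Finsler (non-Riemannian) nature of the metric forbids a direct Jacobian or exponential-map comparison, so the entire argument must be carried out metrically rather than infinitesimally.
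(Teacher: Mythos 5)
Your architecture matches the paper's: $\delta_\Gam=h_{vol}$ by a cocompactness/tiling argument (the paper cites this as a general fact about discrete cocompact isometric actions), $h_{vol}\leq h_{top}(\phi)$ from Manning's Theorem 1, which indeed only needs compactness of $M$ and completeness of $(\Om,d_\Om)$, and $h_{top}(\phi)\leq h_{vol}$ by rerunning Manning's Theorem 2 with the nonpositive-curvature input replaced by a metric statement about projective lines. However, there is a genuine gap in your upper bound: the ``essential input'' you invoke --- convexity of $s\mapsto d_\Om(\sigma_1(s),\sigma_2(s))$ for unit-speed projective-line geodesics --- is false in any Hilbert geometry other than an ellipsoid. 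By the theorem of Kelly and Straus, a Hilbert geometry is nonpositively curved in Busemann's sense (equivalently, has convex distance between affinely parameterized geodesics) if and only if $\Om$ is an ellipsoid. The proposition covers all divisible properly convex domains, including strictly convex non-ellipsoids (where geodesics \emph{are} projective lines, so there is no escape through the choice of parameterizing curves) and the nonstrictly convex Benoist examples; in all of these your key lemma fails. The difficulty you flag at the end --- the absence of a Jacobian/exponential-map comparison --- is not the real obstruction; the obstruction is that Hilbert metrics are genuinely not Busemann convex.

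The repair is exactly what the paper does: replace convexity by the weaker endpoint-sum inequality, valid for projective lines in every Hilbert geometry (Crampon's Lemma 8.3, quoted in the paper's proof): for projective lines $\sigma,\tau\colon[0,r]\to M$ and $t\in[0,r]$,
\[
d_\Om(\sigma(t),\tau(t))\ \leq\ d_\Om(\sigma(0),\tau(0))+d_\Om(\sigma(r),\tau(r)).
\]
This sum bound is all that Manning's Theorem 2 actually uses: footpoints $\delta$-close and time-$t$ endpoints $\delta$-close force $2\delta$-shadowing on all of $[0,t]$, and your covering/finite-to-one endpoint-map argument then goes through verbatim with constants doubled. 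Note the point the paper emphasizes: this lemma is needed only for projective lines, which is precisely what the geodesic flow follows by definition in the nonstrictly convex setting, whereas a statement about all Hilbert geodesics would require strict convexity. With the false convexity claim swapped for the sum inequality, your proof is correct and coincides with the paper's.
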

	\begin{proof}
		Whenever a discrete group of isometries acts cocompactly on a metric space with
		finite critical exponent, 
		one has $\delta_\Gam=h_{vol}$ (a proof is available in \cite[Lemma 4.5]{quintnotes}).
		The statement in  \cite[Theorem 1]{manning} that $h_{vol}\leq h_{top}(\phi)$
		holds as long as $M$ is compact and $(\Om,d_\Om)$ is complete. The proof of the
		opposite inequality in Theorem 2 uses nonpositive sectional curvature to prove a
		technical lemma. We can bypass curvature and prove the lemma immediately in Hilbert
		geometries. The rest of the proof follows in the same way. 
	
		This lemma has already been proven by Crampon in the strictly convex case, but in 
		the proof
		Crampon only uses strict convexity to state the lemma with a strict inequality 
		for all {\em geodesics}, which can only be projective lines when the domain is strictly
		convex.
		Since our geodesic flow is defined to follow projective lines, the lemma suffices. 
	
	\begin{lemma}[\cite{Cr09}, Lemma 8.3]
		For any two projective lines $\sigma,\tau\colon [0,r]\to M$, $r>0$, 
		\[ d_\Om(\sigma(t),\tau(t)) \leq d_\Om(\sig(0),\tau(0))+d_\Om(\sig(r),\tau(r)).
		\]
	\end{lemma}
	\end{proof}
	
%

	\bibliographystyle{plain}

	\bibliography{biblio_submitted.bib}

\end{document}
	
\section{The Bowen program in a nonuniform setting}
	
	R. Bowen famously introduced and proved that specification, a property of uniformly hyperbolic systems, is sufficient for uniqueness of equilibrium states. He also introduced entropy-expansivity, more general than uniform expansivity, to conclude existence of equilibrium states. 

	In this section, we explore a variety of presentations of weaker specification properties in search of one which will help us characterize the topological entropy for the geodesic flow of the Benoist 3-manifolds by exponential orbit growth rates. Ideally, we would like to find a specification property which applies in large generality, so we forgo any hyperbolicity assumptions. 
	We also prove entropy-expansiveness for any compact Hilbert geometry. 
	As a consequence, a measure of maximal entropy exists for the geodesic flow of any Hilbert geometry. 
	A consequence for the Benoist 3-manifolds is that we can prove an entropy inequality which suffices for positive topological entropy.

	\subsection{Specification without hyperbolicity control}

	First, as a motivation, we state the original specification property due to Bowen on his hunt for systems with unique equilbrium states. 
	Recall that we characterize orbit  segments by pairs of points in the phase space and times larger than 0. 
	\begin{definition}[Uniform specification]
	For a flow $\phi^t\colon X\to X$ of a metric space $(X,d)$, we say $x\in X$ $\ep$-shadows the orbit segments $\{(x_i,t_i)\}_{i=1}^n\subset X\times \R^+_0$ with specification spacing $s_i>0, \ i=1,\ldots,n-1$ if there are $|t_i'|<\ep$ such that 
	\begin{align*}
		& x\in W^{su}_\ep(\phi^{t_1'}x_1), \\
		& \phi^{\sum_{i=1}^{k-1}t_i+s_i}x\in B_{t_k}(\phi^{t_k'}x_k,\ep) \text{ for }k=1,\ldots, n-1, \\
		& \phi^{\sum_{i=1}^{n-1}t_i+s_i}x\in W^{ss}_\ep(x_n).
	\end{align*}

	The dynamical system $(X,\phi)$ satisfies \emph{uniform specification} if for all $\ep>0$ there is an $S>0$ such that for any finite set of orbit segments $\mathcal X\subset X\times \R^+_0$ and specification spacing $s_i> S$, $i=1,\ldots, |\mathcal X|-1$, there is an $x\in X$ which $\ep$-shadows $\mathcal X$ with spacing $s_i$. 
		\label{def:uniformspecification}
	\end{definition}

	Uniform specification follows from uniform shadowing of pseudo-orbits, uniform transitivity, and uniform control over exponential contraction rates of stable and unstable sets. 

	\begin{definition}[Weak mixed specification]
	The dynamics $(X,\phi)$ satisfies \emph{weak mixed specification} over $\mathcal G\subset X\times \R^+$ if for all $\ep>0$ and $n\in\N$, there exists a $S>0$ such that for all $\mathcal X := \{(x_i,t_i) \}_{i=1}^n\subset\mathcal G$ and all $s_i> S$, $i=1,\ldots,n-1$, there exists an $x\in X$ which $\ep$-shadows $\mathcal X$ with specification spacing $s_i$. 
		\label{def:weakmixedspecification}
	\end{definition}

	This is transverse to the Climenhaga--Thompson nonuniform specification property \cite{CT15}. 
	The weak mixed specification property is stronger than mixing, but the nonuniformity over number of orbit segments comes from an absence of hyperbolicity control over stable and unstable sets. 
	The Climenhaga--Thompson nonuniform specification property is uniform over $n$ but does not imply topological mixing. 


\begin{theorem} 
	Let $(X,d)$ be a compact metric space
	with a continuous time dynamics given by $\phi$. 
	If $(X,\phi)$ satisfies $\delta$-uniform orbit gluing over $\mathcal G\subset X\times \R^+$
	and topological mixing, 
	then the system has the weak mixed specification property over $\mathcal G$. 
\label{thm:weakspec}
\end{theorem}
\begin{proof}
	For $\ep>0$, choose $\delta(\ep,2n-1)>0$ as for $\ep$-fine orbit gluing of $2n-1$ orbit segments in $\mathcal G$ (Definition \ref{def:correctorbitglue} over $\mathcal G$).
	Take $\mathcal U:=\{B(y_i,\delta/2) \}_{i=1}^k$ a finite cover of $X$. 
	Then by topological mixing there is an $M>0$ such that for all $m\geq M$ and all $U,V\in\mathcal U$, $\phi^m(U)\cap V\neq\emptyset$. 
	Now consider $\mathcal X=\{(x_i,t_i)\}_{i=1}^n\subset\mathcal G$ with specification spacings $s_1,\ldots,s_{n-1}\geq M$. 
	For each $i=1,\ldots,n-1$, there are $y_1,y_2$ such that $\phi^{t_i}x_i\in B(y_1,\delta/2)$ and $x_{i+1}\in B(y_2,\delta/2)$. Then 
	\[
		\varnothing \neq \phi^{s_i}B(y_1,\delta/2) \cap B(y_2,\delta/2) \subset \phi^{s_i}B(\phi^{t_i}x_i,\delta)\cap B(x_{i+1},\delta).
	\]
	Let $z_i\in B(\phi^{t_i}x_i,\delta)$ such that $\phi^{s_i}z_i\in B(x_{i+1},\delta)$. 
	Then the $2n-1$-length $\delta$-pseudo orbit 
	\[ \{ (x_i, t_i), (z_i, s_i), (x_n, t_n) \}_{i=1}^{n-1} \subset \mathcal G
	\]
	can be $\ep$-shadowed by a true orbit $x$ with $|t_i'|,|s_i'|<\ep$ for $i=1,\ldots,n-1$, hence
	\begin{align}
		& \label{weakmix1} x\in W^{su}_\ep(\phi^{t_i'}x_1), \\
		& \phi^{\sum_{i=1}^{k-1} t_i+s_i}x \in B_{t_k}(\phi^{t_k'}x,\ep) \text{ for } k=1,\ldots,n-1, \\
		& \label{weakmix2} \phi^{t_k+\sum_{i=1}^{k-1}t_i+s_i} x\in B_{s_k}(\phi^{s_k'}z_k,\ep) \text{ for } k=1,\ldots, n-2, \\
		& \label{weakmix3} \phi^{\sum_{i=1}^{n-1}t_i+s_i}x\in W^{ss}_\ep(x_n).
	\end{align}
	Note that (\ref{weakmix1}, \ref{weakmix2}, \ref{weakmix3}) are as needed for weak mixed specification. 
\end{proof}

	A specification property which is stronger than topological mixing is likely not needed for the goals of this section. Thus we introduce another specification property. This property is strictly weaker than the Climenhaga--Thompson nonuniform specification property. 
	\begin{definition}[Weak specification]
	The dynamics $(X,\phi)$ satisfies \emph{weak specification} if for all $\ep>0$, $n\in\N$
	there exists an $S>0$ such that for all $\mathcal X := \{(x_i,t_i) \}_{i=1}^n\subset X\times \R^+_0$ there exist $s_i\leq S$, $i=1,\ldots,n-1$, and an $x\in X$ which $\ep$-shadows $\mathcal X$ with specification spacing $s_i$. 
		\label{def:weakspecification}
	\end{definition}

\begin{theorem} 
	Let $(X,d)$ be a compact metric space
	with a continuous time dynamics given by $\phi$. 
	If $(X,\phi)$ satisfies $\delta$-uniform orbit gluing over $\mathcal G\subset X\times \R^+$
	and uniform transitivity
	then the system satisfies the weak specification property over $\mathcal G$. 
\label{thm:weakspec1}
\end{theorem}
\begin{proof}
	For $\ep>0$, choose $\delta(\ep,2n-1)>0$ as for $\ep$-fine orbit gluing of $2n-1$ orbit segments in $\mathcal G$ (Definition \ref{def:correctorbitglue} over $\mathcal G$).
	Take $\mathcal U:=\{B(y_i,\delta/2) \}_{i=1}^k$ a finite cover of $X$. 
	Then by uniform transitivity there is an $M(\ep,n)$ and times $s_{ij}<M$ such that $\phi^{s_{ij}}(B(y_i,\delta/2))\cap B(y_j,\delta/2)\neq\varnothing$. 
	Now consider $\mathcal X=\{(x_i,t_i)\}_{i=1}^n\subset\mathcal G$. 
	For each $i=1,\ldots,n-1$, there are $y_{a_i},y_{b_i}$ with $a_i,b_i\in\{1,\ldots,k\}$ such that $\phi^{t_i}x_i\in B(y_{a_i},\delta/2)$ and $x_{i+1}\in B(y_{b_i},\delta/2)$. Then 
	\[
		\varnothing \neq \phi^{s_{a_i b_i}}B(y_{a_i},\delta/2) \cap B(y_{b_i},\delta/2) \subset \phi^{s_i}B(\phi^{t_i}x_i,\delta)\cap B(x_{i+1},\delta).
	\]
	Denote $s_i := s_{a_i b_i}$. 
	Let $z_i\in B(\phi^{t_i}x_i,\delta)$ such that $\phi^{s_i}z_i\in B(x_{i+1},\delta)$. 
	Then there exist $s_i \leq M$ such that the $2n-1$-length $\delta$-pseudo orbit 
	\[ \{ (x_i, t_i), (z_i, s_i), (x_n, t_n) \}_{i=1}^{n-1} \subset \mathcal G\]
	can be $\ep$-shadowed by a true orbit $x$ with $|t_i'|,|s_i'|<\ep$ for $i=1,\ldots,n-1$, hence
	\begin{align}
		& \label{weak1} x\in W^{su}_\ep(\phi^{t_i'}x_1), \\
		& \phi^{\sum_{i=1}^{k-1} t_i+s_i}x \in B_{t_k}(\phi^{t_k'}x,\ep) \text{ for } k=1,\ldots,n-1, \\
		& \label{weak2} \phi^{t_k+\sum_{i=1}^{k-1}t_i+s_i}x\in B_{s_k}(\phi^{s_k'}z_k,\ep) \text{ for } k=1,\ldots, n-2, \\
		& \label{weak3} \phi^{\sum_{i=1}^{n-1}t_i+s_i}x\in W^{ss}_\ep(x_n).
	\end{align}
	Note that (\ref{weak1}, \ref{weak2}, \ref{weak3}) are as needed for weak specification. 
\end{proof}

Given the lack of uniformity in Lemma \ref{lem:orbitglue}, we cannot yet prove weak specification. Instead, we have a weaker version. 
\begin{definition}[Weaker specification]
	The dynamics $(X,\phi)$ satisfies \emph{weaker specification} if for all $\ep>0$, $n\in\N$, and $\{x_i\}_{i=1}^n\subset X$, there exists an $S>0$ such that for all $\mathcal X := \{(x_i,t_i) \}_{i=1}^n\subset X\times \R^+_0$, there are $s_i\leq S$, $i=1,\ldots,n-1$, and an $x\in X$ which $\ep$-shadows $\mathcal X$ with specification spacings $s_i$. 
	\label{def:weakerspecification}
\end{definition}
%
\begin{proposition} 
	The geodesic flow $\phi^t\colon SM\to SM$ satisfies the weaker specification property. 
\label{thm:evenweakerspec}
\end{proposition}
\begin{proof}
	The proof is virtually the same as earlier proofs. The only assumptions we need are the weak orbit gluing property and uniform transitivity. 
\end{proof}